\author{
Nicolae Mihalache$^{1}$
\quad and \quad
Fran\c{c}ois Vigneron$^{2}$
}
\newcommand{\authornicu}{{\small
\textbf{Nicolae Mihalache.}
Univ Paris-Est Creteil, CNRS UMR 8050, LAMA, F-94010 Creteil, France and
Univ Gustave Eiffel, LAMA, F-77447 Marne-la-Vallée, France\\
\phantom{x}\hfill\texttt{nicolae.mihalache@u-pec.fr}
}}
\newcommand{\authorfv}{{\small
\textbf{François Vigneron.}
Universit\'{e} de Reims Champagne-Ardenne,
Laboratoire de Mathématiques de Reims,
UMR 9008 CNRS, Moulin de la Housse, BP 1039,
F-51687 Reims\\
\phantom{x}\hfill\texttt{francois.vigneron@univ-reims.fr}
}}
\newtheorem{thm}{Theorem}
\newtheorem{prop}{Proposition}
\newtheorem{lemma}[prop]{Lemma}
\newtheorem{corollary}[prop]{Corollary}
\newtheorem{remark}[prop]{Remark}
\newcommand{\N}{\mathbb{N}}
\newcommand{\Z}{\mathbb{Z}}
\newcommand{\R}{\mathbb{R}}
\newcommand{\C}{\mathbb{C}}
\newcommand{\CC}{\overline{\mathbb C}}
\DeclareMathOperator{\Hol}{Hol}
\newcommand{\U}{\mathbb{U}}
\newcommand{\M}{\mathcal{M}}
\newcommand{\julia}{\mathcal{J}}
\newcommand{\fatou}{\mathcal{F}}
\DeclareMathOperator{\attract}{\mathcal{A}}
\DeclareMathOperator{\crit}{Crit}
\DeclareMathOperator{\dist}{dist}
\newcommand\ii[2]{\ensuremath{\llbracket{#1}, {#2} \rrbracket}}
\newcommand{\fprec}{\mathcal{R}}
\newcommand{\fprecRef}{\mathcal{Z}}
\DeclareMathOperator{\round}{R}  
\DeclareMathOperator{\ulp}{ulp}
\DeclareMathOperator{\disk}{D}
\DeclareMathOperator{\interval}{I}
\renewcommand{\Re}{\operatorname{Re}}
\renewcommand{\Im}{\operatorname{Im}}
\DeclareMathOperator{\eval}{V}
\newcommand{\defequal}{=}
\newcommand{\eg}[1][~]{\textit{e.g.}#1}
\newcommand{\ie}[1][~]{\textit{i.e.}#1}
\def\cod#1{\texttt{#1}}
\DeclareMathOperator{\hyp}{Hyp}
\DeclareMathOperator{\mis}{Mis}
\renewcommand{\div}{\operatorname{Div}}
\newcommand{\citelib}[1]{\texttt{\cite{#1}}}
\newcommand{\mesh}{\mathbb{M}}
\newcommand{\lev}{\mathbb{L}}
\newcommand{\newparallel}{\mathrel{\mathpalette\new@parallel\relax}}
\newcommand{\new@parallel}[2]{%
  \begingroup
  \sbox\z@{$#1T$}
  \resizebox{!}{\ht\z@}{\raisebox{\depth}{$\m@th#1/\mkern-5mu/$}}%
  \endgroup
}
\begin{document}

\title{How to split a tera-polynomial}
\maketitle

\begin{abstract}  
This article presents a new algorithm to compute all the roots of two families of polynomials that are
of interest for the Mandelbrot set $\mathcal{M}$ : the roots of those polynomials are respectively
the parameters $c\in\mathcal{M}$ associated with periodic critical dynamics for $f_c(z)=z^2+c$ (hyperbolic centers)
or with pre-periodic dynamics (Misiurewicz-Thurston parameters). The algorithm is based on the
computation of discrete level lines that provide excellent starting points for the Newton method.
In practice, we observe that these polynomials can be split in linear time of the degree.

\medskip
This article is paired with a code library~\citelib{MLib} that implements this algorithm. Using this library and
about 723\,000 core-hours on the HPC center \textit{Roméo} (Reims), we have successfully found all
hyperbolic centers of period $\leq 41$ and all Misiurewicz-Thurston parameters whose period
and pre-period sum to $\leq 35$.
Concretely, this task involves splitting a tera-polynomial, \ie a polynomial of degree~$\sim10^{12}$,
which is orders of magnitude ahead of the previous state of the art. It also involves dealing
with the certifiability of our numerical results, which is an issue that we address in detail, both mathematically
and along the production chain. The certified database is available to the scientific community \citelib{MLibData}.

\medskip
For the smaller periods that can be represented using only hardware arithmetic (floating points \texttt{FP80}),
the implementation of our algorithm can split the corresponding polynomials of degree $\sim10^{9}$
in less than one day-core.
We complement these benchmarks with a statistical analysis of the separation of the roots, which
confirms that no other polynomial in these families can be split without using higher precision arithmetic.
\end{abstract}

{\small
\tableofcontents
}
\newpage


\section{Introduction}
\label{par:intro}

Algorithms to find all the roots of a given polynomial date back to the beginning of mathematics.
Greeks and Babylonians understood quadratic equations 2000~BC and applied them to
compute the boundaries of their agricultural fields, to define just taxes and fair trade.
Al-khawarizmi's algorithm (825AD) constitutes the official starting point of \textit{algebra}
and gave it its name.

\medskip
At a theoretical level, the fundamental theorem of algebra ensures that every polynomial
can be split over $\C$ and the proof of this results was successively refined
by d'Alembert (1746), C.F.~Gauss (1799), J.R.~Argand (1806) and A.L.~Cauchy (1821).
The race towards general solutions by radicals, though punctuated by great achievements,
ultimately stumped all generations until N.~Abel (1829) and E.~Galois (1832) proved the
problem to be unsolvable and discovered a new branch of mathematics (groups theory)
along the way.

\medskip
Polynomial roots are also of key importance for modern numerical analysis.
For example, Gauss quadrature formulas provide an extremely efficient way to compute the
value of integrals as a linear combination of evaluations at the roots of some orthogonal
polynomial (Legendre, Jacobi, Chebyshev, Laguerre, Hermite,\ldots); see \eg\cite{BERMAD}.
Another striking example is the computation of
eigenvalues of linear operators, which are, by definition (at least when the ambiant dimension
is finite), the roots of the characteristic polynomial. 
The knowledge of eigenvalues is crucial to understand the behavior of dynamical systems
as illustrated, dramatically, by the collapse of Tacoma's bridge in 1940. Similarly, the Covid-19 pandemic
has put $R_0$ (the largest eigenvalue of the regeneration matrix in a population model) under
the international spotlight. Eigenvalues are also essential in exploratory data analysis when
a predictive model is based on a principal component analysis.

\medskip

In this article, we focus on two important families of polynomials.
The family of \textit{hyperbolic polynomials} is defined recursively by
\begin{equation}\label{def:hyp_poly}
p_0(z) \defequal 0, \qquad p_{n+1}(z) \defequal p_n(z)^2 + z.
\end{equation}
For $n\geq 1$, one has $\deg p_n = 2^{n-1}$.
The family of \textit{Misiurewicz-Thurston polynomials} is a two parameters family defined for $\ell,n\in\N$ by
\begin{equation}\label{def:mis_poly}
q_{\ell,n}(z)=p_{\ell+n}(z) - p_\ell(z).
\end{equation}
The polynomials $p_n$ and $q_{\ell,n}$ are closely related to the Mandelbrot set $\M$
(see \cite{CARLESON}, \cite{MILNOR} or Section~\ref{par:mandel} for further details).
This set is both one of the most intricate mathematical objects and a very famous computer-generated image (Figure~\ref{fig:mandelbrot}).
Its fractal nature has fascinated the general public, the mathematical and the computer science communities
since 1980 and a great deal of research has been done around its properties during this time.
The century old conjecture of P.~Fatou~\cite{FATOU1919} remains largely open and can be stated as the following question:
are all connected components of the interior of~$\M$ hyperbolic, \ie do they each contain exactly one root of some $p_n$ ?

\begin{figure}[H]
\begin{center}
\includegraphics[width=.55\textwidth]{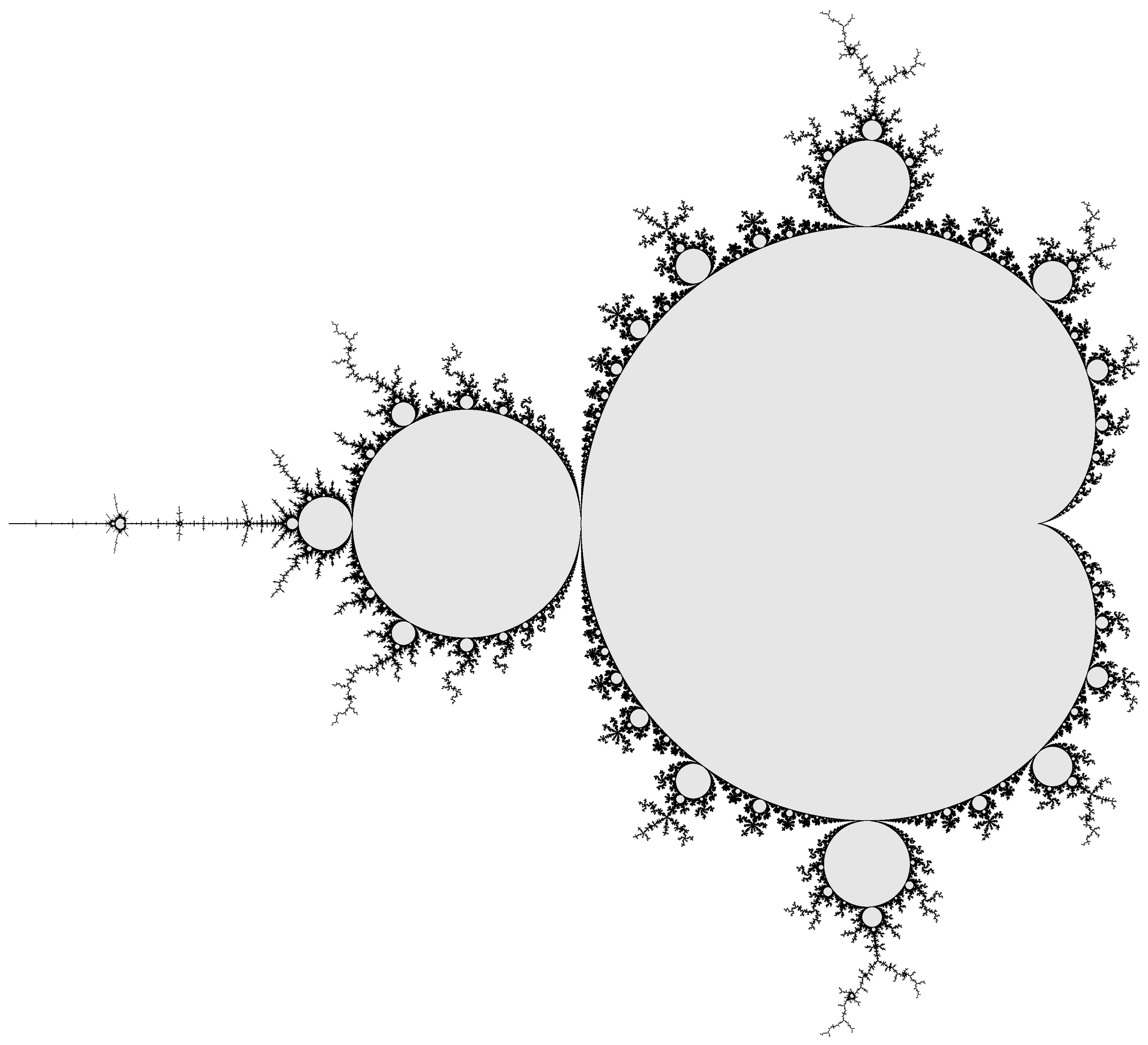}
\caption{\label{fig:mandelbrot}\small\sl
The Mandelbrot set~$\M$ with $\partial\M$ in black and in gray, the interior of $\M$.}
\end{center}
\end{figure}

The current state of the art for root-finding algorithms is limited, in practice, to polynomials
whose degree does not exceed a few millions or, at best, a few hundred millions.
In Section~\S\ref{par:roots} we survey briefly the corresponding algorithms.
For example, D. Schleicher and R. Stoll \cite{SS2017} have mastered this scale and give computing times%
\footnote{\label{timecore}A \textit{time-core} unit is a rough measure of the practical complexity of an implementation,
regardless of the architecture of the computer. It is obtained by 
adding the individual compute time of each active computing unit involved throughout the computation.
A convenient estimate and upper bound is the product of the wall-clock time of the computation by the number of CPU cores in use.
Up to a point, mostly limited by memory management,
parallelization leads to a better wall-clock time, even though the overall time-core requirements may be substantially higher.}
ranging from 18 hours-core to 15 days-core for various polynomials of degree $2^{20}\sim\nobreak10^6$.
Other implementations using a different splitting algorithm and massively parallel architectures \cite{GSCG19}
end up with similar core requirements and degree limitations.
Eigenvalue methods encounter similar limitations on the size of non-zero elements in the matrices~\cite{DT93}, \cite{IYM2011}, \cite{SZIYKH2015}, \cite{SK2019}.
A substantial leap forward was accomplished by~\cite{RSS2017}: using an adaptative mesh-refinement technique
for the Newton method they split a polynomial of degree $2^{24}$ in less than 7 days-core, however with a few missing roots,
and even one instance of a polynomial of degree $2^{30}\sim  10^{9}$ in 4 days-core.
This remarquable implementation is even backed by theoretical results~\cite{HSS2001}, \cite{Sch2023} (see \S\ref{par:HSS} below) that ensure that a certain set of starting points
guaranties that all roots will be found.

\medskip
For the application to Gaussian quadrature, one can exploit the specific connection between orthogonal
polynomials and the associated ODEs to compute the roots in linear time.
The published benchmark time \cite{Bre17} is about 6 day-core to compute a Gauss-Jacobi rule of record size $10^{12}$.
The algorithm is explained in \cite{BMF2012}, \cite{HT2013}, \cite{TTO2016}.

Let us point out that, while numerical analysis has recently reached problems
with $2\times 10^9$ degrees of freedom for hyperbolic PDEs in field-ready medical imaging~\cite{Hecht2019},
 $550\times\nobreak10^9$ degrees of freedom for elliptic PDEs on the \textit{Titan} supercomputer~\cite{GMSB2016}
and even~$10^{15}$ degrees of freedom for a turbulence simulation on \textit{Summit}~\cite{RAY2019},
those remarkable achievements do not yet imply the ability to solve routinely eigenvalue and polynomial splitting problems of this size.
In particular, as we will see in Section~\S\ref{par:num_results}, the low separation between roots of polynomials of extreme degree
cannot, in general, be resolved using hardware floating point arithmetic.

\subsection{Our results in a nutshell}\label{intro:our_results}

This article raises the state of the art of polynomial splitting to a degree about one trillion (\ie $2^{40}\sim 10^{12}$),
called a \textsl{tera-polynomial}\footnote{As a trinomial denotes a polynomial expression with 3 terms,
we should call it a \textsl{teranomial}. We decided against it as the single letter difference with the prefix \textsl{tetra} (that denotes 4)
could lead to confusion.}.
We do not claim full generality and focus on the families $p_n$ and $q_{\ell,n}$ defined above.

\medskip
To achieve this result without brute force, the main idea is a new splitting algorithm.
Roughly speaking, the classical Newton's method consists in choosing~\cite{HSS2001} a mesh of initial points
that encircles the roots and to iterate the Newton map until the trajectories reach the roots; this method results in
a dense mesh of most of the disk (see Fig.~\ref{fig:idea}).

\begin{figure}[h!]
\begin{center}
\includegraphics[width=\textwidth]{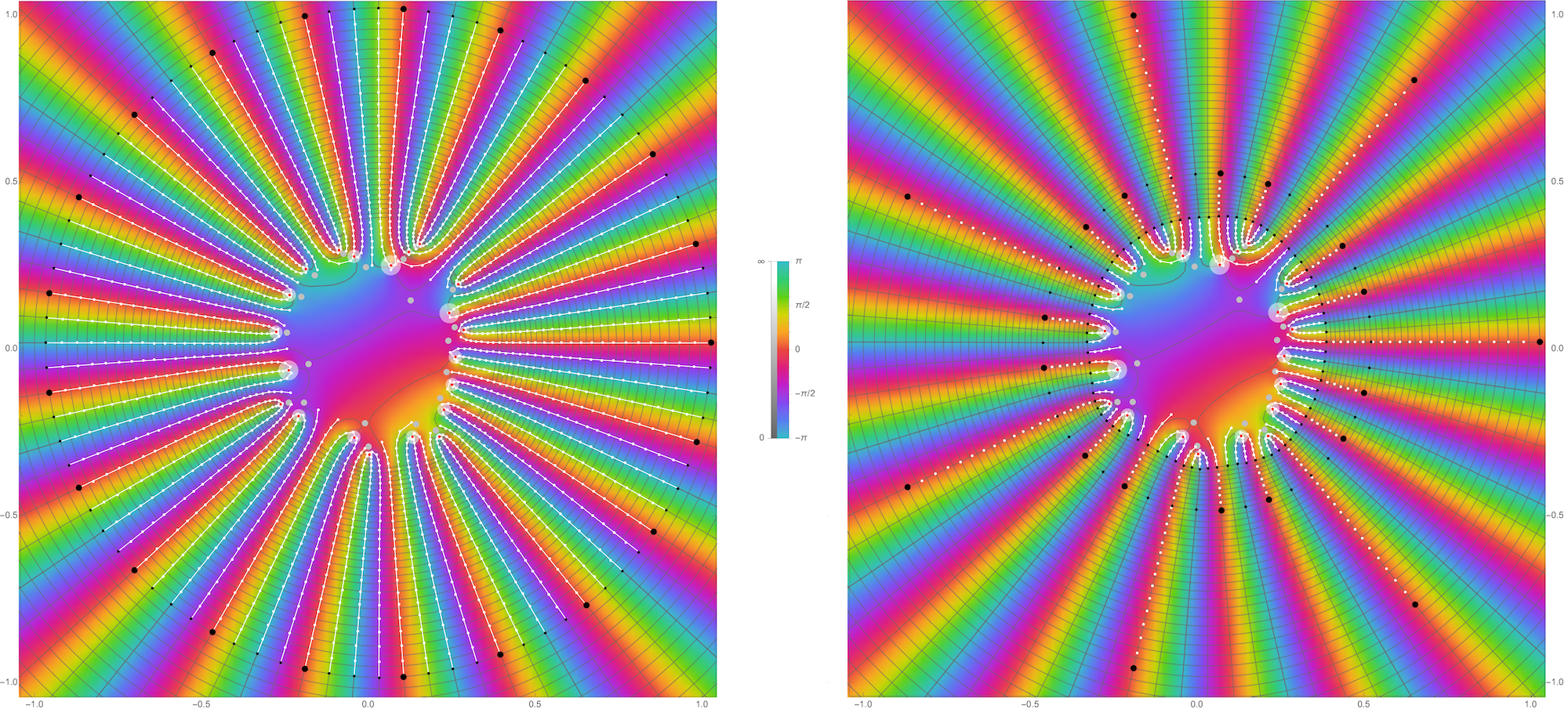}
\caption{\label{fig:idea}\small\sl
Newton's method to split a polynomial of degree $d=20$ with 4 starting points (black) per root.
The trajectories (white) are Newton's iterations, which are stoped in case of divergence;
the gray dots mark the critical points; the white disk are contracted to the roots (red).
The classical choice for the starting points (left) leads to $O(d^2)$ Newton steps.
A better choice, proposed in this article (right) optimizes the position of the starting points
by combining their computation with that of a level curve and brings, in practice, the total number of steps to~$O(d)$.
See also \cite[Fig.~2]{RSS2017} for a similar idea, though implemented differently.
}
\end{center}
\end{figure}

\noindent
Instead, we propose to choose the starting points of the Newton method on a level curve that hugs the set of roots tightly.
To produce this curve, one starts on an extremely lean mesh to perform most of the ``descend'' with as little computations as possible.
When needed, one can then densify the mesh at a certain level set (horizontally) and then continue the descend (vertically) with more points.
Ultimately, we catch up with the dense mesh of the classical method, having only a few Newton steps left until we find all the roots.

\medskip
We are releasing an exhaustive and proven list \citelib{MLibData}
of all the roots of $p_n$ (called hyperbolic centers) for $n$ up to period $41$
and all the roots of $q_{\ell,n}$ (called pre-periodic or Misiurewicz-Thurston points) of order $\ell+n\leq 35$.
By proven, we mean that each point $z_j$ in our database comes with a mathematical statement that guaranties two things:
an exact root $\tilde{z}_j$ lies within a certain maximal tolerance from our numerical value,
and a standard refinement technique (Newton's method) applied to $z_j$ converges to $\tilde{z}_j$.
To ensure a strict control of the computational errors that is robust enough to withstand intensive iterations,
we have revisited and extended the theory of disk arithmetic.

As a companion to this article, we also release an implementation of our algorithm as a standard \texttt{C} library~\citelib{MLib}. 
This library can be used to check and explore our database \citelib{MLibData}
and contains all the tools necessary to perform extractions,
generate images, compute arbitrary precision refinements, etc.
This library is optimized on all fronts: it can run on a single core for the smallest periods
or can take advantage of the massively parallel architectures of modern HPC centers for the higher ones. 
For example, the splitting of~$p_{21}$, which is reported as requiring at least 18.8 hours-core in~\cite{SS2017}
takes only 10~seconds with our algorithm, on a single core of a consumer-grade personal computer.
Similarly, we can split $p_{29}$ in only 1 hour-core and $p_{33}$ in about 1 day-single-core.
The computations automatically switch to high-precision, using the \citelib{MPFR} library, but only when necessary.
For example, when $n\geq 30$ the roots of $p_n$ near $-2$ are so close that they cannot be resolved in the standard
64 bits floating point arithmetic. Up to $n\leq 41$, we use numbers with 128 significant bits.
Throughout the library, custom data structures ensure near optimal memory usage, data access and computation times.

The case of the polynomials $q_{\ell,n}$ is interesting because, contrary to the $p_n$, they have multiple roots,
which slow down drastically the dynamics of the Newton map. However, our algorithm remains robust in this case.
We stopped at the order $\ell+n\leq 35$. The volume of data generated by the roots of all $(\ell,n)$ combinations
is of the same order of magnitude to that of the roots of all $p_n$ for $n\leq 39$ (see Section~\ref{par:scale}).

\medskip
Our database and its companion library constitute a powerful \textit{numerical microscope}
whose resolution extends far beyond the current state of the art and that can assist the mathematical
community in the exploration of the properties of $\M$.
While an in-depth analysis will take time, we already present in this first article
a few properties, like the minimal distance between the roots of a given type (see Section~\ref{par:HardwareLimit}).

\subsection{Structure of this article}\label{intro:structure}

Our article is organized as follows.
Section~\S\ref{par:mandel} contains a brief self-contained introduction to the Mandelbrot set.
Section~\S\ref{par:roots} presents a quick review of the most common splitting (or at least root finding) algorithms for polynomials.
A detailed presentation of our new algorithm is done in Section~\S\ref{par:split}, along with a general discussion of its algorithmic
complexity. The main statements, Theorems~\ref{thm:practicalFTA} and \ref{thm:levLineFlow}, are based on Theorem~\ref{thm:target}
proved in Appendix~\ref{par:mathBackground}.
As we do not claim full generality, the specifics of working with polynomials tailored to the Mandelbrot set is discussed too.
Readers interested in polynomials of high degrees may appreciate, on the side, our improvement of the standard evaluation scheme~\cite{AMV21},
along with its companion library~\cite{FPELib}.

\medskip
Section~\S\ref{par:certif} is focused on the question of providing certifiable results. The backbone of this section
is a theory of a disk arithmetic (Theorem~\ref{thm:diskArithmetic}),
which, contrary to interval arithmetic, tolerates a high number of iterations of holomorphic maps.

\medskip
Our implementation~\citelib{MLib} is presented in Section~\S\ref{par:num_results}. 
The computer assisted proof of the $3\times 10^{12}$ instances of the generic Theorem~\ref{thm:certif}
relies on Section~\S\ref{par:certif},
in particular Theorems \ref{thm:proofRoots}, \ref{thm:proofContract}.
We also discuss the practical cost of our algorithm as we scale up to splitting the tera-polynomial $p_{41}$.
This analysis confirms, in practice, the complexity claims of Section~\S\ref{par:split}.
A listing of the tasks is given in Appendix~\ref{MTasks}.

\medskip
A proof of our Theorem~\ref{thm:fact_mis} stated in Section \S\ref{par:mandel} on the factorization of $q_{\ell,n}$
is given in Appendix~\ref{par:factorization}.
A few background auxiliary mathematical results used in section \S\ref{par:certif} are recalled
briefly in Appendix~\ref{par:mathBackground}; Theorem~\ref{thm:target} and its corollary are new.
For the sake of convenience, the notations introduced throughout the article are listed in Appendix~\ref{par:notations}.

\medskip
Theorems are numbered independently (up to~\ref{thm:JUNG1985}).
All other statements and remarks follow a distinct continuous numbering.

\subsection{Thanks}

We are grateful to \textit{Romeo}, the HPC center of the University of Reims Champagne-Ardenne,
for hosting our project graciously even though its scale (near 50TB of data and 1~century-core / 870 kh of CPU usage
for both the development and production phase) is quite substantial for this regional structure.

\smallskip
It is our pleasure to thank our families who supported us in the long (5+ years) and, at times stressful,
process that gave birth to the \texttt{Mandelbrot} library.
We also thank Sarah Novak for a thorough proof-reading of this article.

\section{About the Mandelbrot set}
\label{par:mandel}
In this section, we will collect the strict minimum on the Mandelbrot set and fix notations
that will be needed subsequently. See Appendix~\ref{par:notations} for standard notations.

\bigskip
The \textit{Mandelbrot} set $\M$ is composed of the parameters~$c\in\C$ for which the sequence~$(p_n(c))_{n\in\N}$ (defined in \eqref{def:hyp_poly}) remains bounded (Fig.~\ref{fig:mandelbrot}).
This sequence is the orbit by iterated compositions of the only critical point $z=0$ of the map $f_c(z) \defequal z^2 +c$, \ie
\begin{equation}\label{eq:defFundMap}
\forall n\in\N,\qquad
f_c^n (0)\defequal \underset{\text{$n$ times}}{\underbrace{\,f_c \circ \ldots\circ f_c}}(0)  = p_n(c).
\end{equation}
For short, the sequence $(p_n(c))_{n\in\N}$ is called the \textit{critical orbit} of $f_c$.
$\M$ is a connected set, whose complement in $\CC$ is simply connected \cite{DOUHUB82}, \cite{Sib}.  
One has
\[
\M\subset \disk(0,2)\cup\{-2\}.
\]
There are many tools online to explore~$\M$, both for scientific purposes or just for
its intrinsic beauty; see \eg\cite{CHE10} or \cite{EXPL1}.

\medskip 
For $c\in\C$, the dynamics of $f_c$ splits $\CC$ in two complementary sets.
The \textit{Fatou} set~$\fatou_c$ is the open subset of $z\in\CC$ in the neighborhood of which, the sequence $(f_c^n)_{n\in\N}$ is a normal family
\ie precompact in the topology of local uniform convergence.
On the contrary, on the \textit{Julia} set~$\julia_c=\CC\backslash\fatou_c$, the dynamics appears to be, loosely speaking, chaotic.
Both~$\fatou_c$ and~$\julia_c$ are fully invariant (\ie invariant sets of the forward and backwards dynamics)
and that $c\in\M$ if and only if $\julia_c$ is connected.
For a review of the properties of Fatou and Julia sets, see~\eg\cite{CARLESON}, \cite{MILNOR} for polynomial and rational maps
and~\cite{Ber93}, \cite{MRW22} for entire and meromorphic functions.

\subsection{Hyperbolic parameters}

For $n\geq1$, the roots of $p_n$ are parameters $c\in\M$, called \textit{hyperbolic centers},
whose critical orbits are periodic of period $n$.
The roots of $p_n$ are simple~\cite{DOUHUB82, BUFF2018}
and the polynomial $p_n(z)$ is divisible by $p_k(z)$ for any divisor $k$ of $n$.
A.~Douady and J.H.~Hubbard \cite{DOUHUB82, DOUHUB84} have shown that the hyperbolic centers are interior points of~$\M$,
with at most one hyperbolic center per connected component of the interior.
The set of all hyperbolic centers is also dense in the boundary of $\M$ in the sense that
its closure in $\C$ contains $\partial\M$.

\medskip
Let us define the set of hyperbolic centers of \textit{order} $n\geq1$
as the subset of $p_n^{-1}(0)$ whose minimal (or fundamental) period is exactly $n$; in other terms:
\begin{equation}\label{def:hyp}
\hyp(n) \defequal \left\{ z\in p_n^{-1}(0) \,\big\vert\, \forall k\in\div(n)^\ast, \: p_k(z)\neq 0 \right\}
\end{equation}
where $\div(n) \defequal \{ k\in\N^\ast \,;\, \exists k'\in\N, \enspace \enspace k k' = n\}$
and $\div(n)^\ast=\div(n)\backslash\{n\}$ is the set of strict divisors of $n$.
For example,~$\hyp(1) = \{0\}$ and $\hyp(2) = \{-1\}$. The reduced hyperbolic polynomial, also known as Gleason's polynomial, is:
\begin{equation}\label{eq:hyp_red}
h_n(z) \defequal \prod\limits_{r\in\hyp(n)}(z-r).
\end{equation}
The polynomials $p_n$ and $h_n$ have integer coefficients.
While expected, the irreducibility of $h_n$ over $\Z[z]$ remains conjectural (see \cite[last remark of~\S3]{HT15}, \cite[p.155]{SS2017}).

\begin{thm}[folklore, included in \cite{HT15}]\label{thm:countingHyp}
The complete factorization of $p_n$ is
\begin{equation}\label{eq:HypFactor}
p_n(z) = \prod_{k\vert n}h_k(z).
\end{equation}
Moreover, the cardinal of $\hyp(n)$ is given by
\begin{equation}\label{eq:HypCount}
\left|\hyp(n)\right| = \sum_{k\vert n} \mu(n/k)2^{k-1}
\end{equation} 
where $\mu$ is the M\"obius function, \ie
\[
\mu(n)=\begin{cases}
(-1)^{\nu} & \text{if $n$ is square free and has $\nu$ distinct prime factors,}\\
0 & \text{if $n$ is not square free.}
\end{cases}
\]
\end{thm}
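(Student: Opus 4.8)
The plan is to prove the two displayed formulas by a Möbius-inversion argument on the dynamical decomposition of the set of periodic critical parameters. First I would establish the set-theoretic identity
\[
p_n^{-1}(0) = \bigsqcup_{k\mid n} \hyp(k)
\]
as a \emph{disjoint} union. For this, recall from \eqref{eq:defFundMap} that $p_n(c)=f_c^n(0)$, so $c$ is a root of $p_n$ exactly when the critical orbit of $f_c$ satisfies $f_c^n(0)=0$, i.e. the critical orbit is periodic with some period dividing $n$. Each such $c$ has a well-defined minimal period $k$, and $k$ must divide $n$ (a standard fact: if $f_c^n(0)=0$ and $k$ is the minimal period then $k\mid n$, since the set of $m$ with $f_c^m(0)=0$ is closed under subtraction of multiples once one orbit point returns to $0$; more directly, $0$ is a periodic point of $f_c$ of exact period $k$, and $f_c^n$ fixes it iff $k\mid n$). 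By the definition \eqref{def:hyp}, $c\in\hyp(k)$ precisely when its minimal period is exactly $k$, so the sets $\hyp(k)$ for $k\mid n$ partition $p_n^{-1}(0)$, and the union is disjoint because the minimal period is unique.

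Next I would pass to the polynomial level. Since the roots of $p_n$ are simple (stated in the excerpt, \cite{DOUHUB82, BUFF2018}) and $p_n$ is monic of degree $2^{n-1}$ for $n\ge1$, we have $p_n(z)=\prod_{r\in p_n^{-1}(0)}(z-r)$. Grouping the roots according to the partition above and using the definition \eqref{eq:hyp_red} of $h_k$ gives immediately
\[
p_n(z)=\prod_{k\mid n}\prod_{r\in\hyp(k)}(z-r)=\prod_{k\mid n}h_k(z),
\]
which is \eqref{eq:HypFactor}; the factorization is complete precisely because each $h_k$ is a product of distinct linear factors and the factors for distinct $k$ share no root. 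Taking degrees (and noting $\deg p_n=2^{n-1}$) yields the identity $\sum_{k\mid n}|\hyp(k)| = 2^{n-1}$ for all $n\ge1$. Möbius inversion then gives $|\hyp(n)| = \sum_{k\mid n}\mu(n/k)\,2^{k-1}$, which is \eqref{eq:HypCount}. (One should double-check the base case $n=1$: $\deg p_1 = 1$ and $|\hyp(1)| = 1$, consistent with $2^{0}=1$.)

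The one genuinely non-formal input is the claim that the minimal period of a periodic critical orbit divides $n$ whenever $f_c^n(0)=0$, together with the simplicity of the roots of $p_n$; both are classical, and I would simply cite \cite{DOUHUB82, BUFF2018}. The divisibility $p_k \mid p_n$ in $\Z[z]$ when $k\mid n$, mentioned earlier in the excerpt, is consistent with but not needed for this argument — the root-counting handles everything. Thus the only "obstacle" is bookkeeping: making sure the disjointness of the union $\bigsqcup_{k\mid n}\hyp(k)$ is argued cleanly from the uniqueness of the minimal period, after which \eqref{eq:HypFactor} and \eqref{eq:HypCount} are two lines of Möbius inversion. I expect the whole proof to be short.
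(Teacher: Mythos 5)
Your proof is correct. Note, though, that the paper does not supply a proof of Theorem~\ref{thm:countingHyp}: it is labeled folklore, the reader is pointed to \cite{HT15} for a more general statement (iterates of $z^d+c$) and to the OEIS entry for the count \eqref{eq:HypCount}, so there is no in-text argument to compare against. Your route is the standard one: partition $p_n^{-1}(0)=\bigsqcup_{k\mid n}\hyp(k)$ by exact period of the critical point (using that $0$ has a well-defined exact period $k$ under $f_c$, and $f_c^n(0)=0$ iff $k\mid n$, so the definition \eqref{def:hyp} of $\hyp(k)$ does pick out exactly the parameters of exact period $k$); invoke simplicity of the roots of $p_n$ \cite{DOUHUB82, BUFF2018} and monicity to promote the set partition to the polynomial identity \eqref{eq:HypFactor}; take degrees to get $\sum_{k\mid n}|\hyp(k)|=2^{n-1}$; and apply M\"obius inversion. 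This is complete and is precisely the argument implicit in the paper's ``folklore'' attribution.
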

\noindent
The notation $k\vert n$ means $k\in\div(n)$ and $|S|$ denotes
the cardinal of a finite set.
The identity~\eqref{eq:HypFactor} is well known and is key in the count of hyperbolic centers.
The Online Encyclopedia of Integer Sequences~\cite{CARDHk} attributes~\eqref{eq:HypCount} to 
Warren D. Smith and Robert Munafo (2000), sadly with no published reference.
For a more general result that applies to iterated maps of the form $z^d+c$, see~\cite{HT15}.

\begin{figure}[H]
\captionsetup{width=.95\linewidth}
\begin{center}
\includegraphics[width=\textwidth]{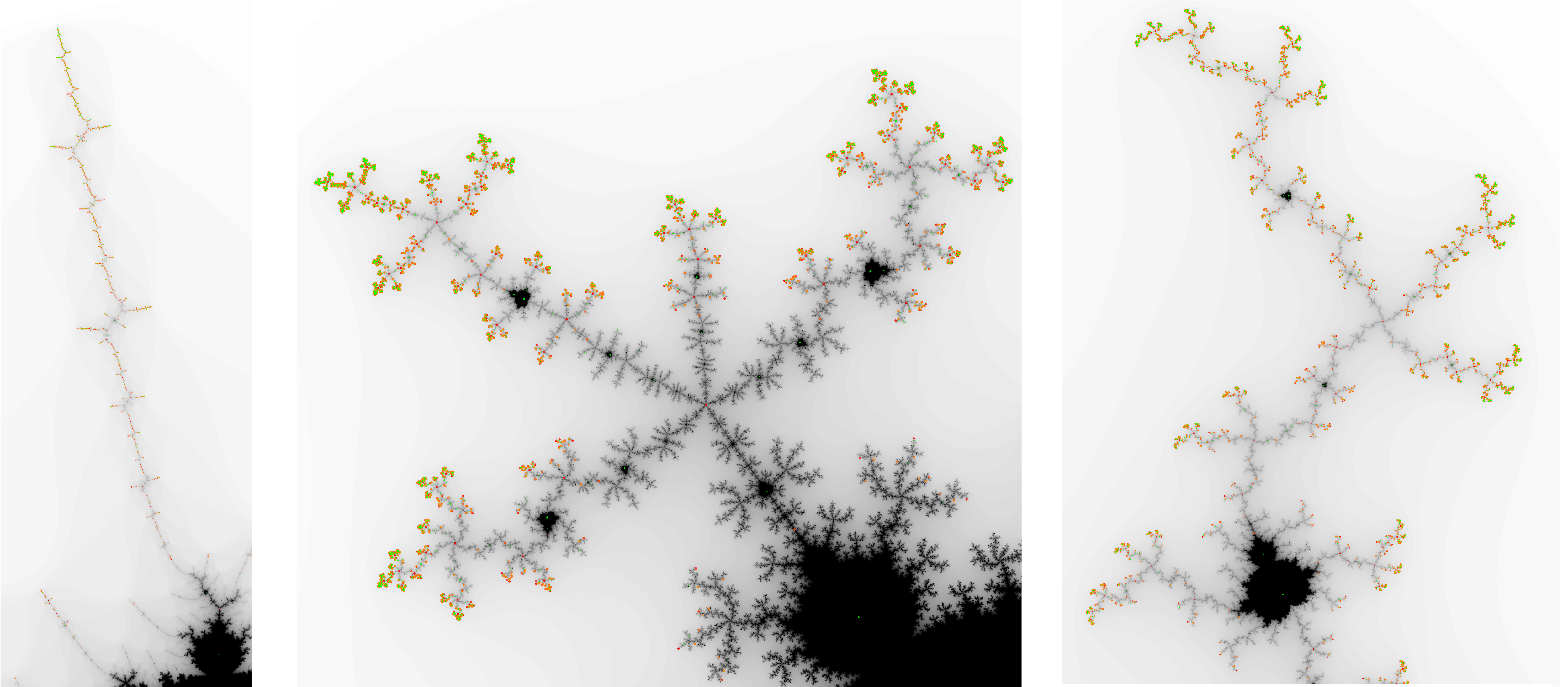}
\caption{\label{fig:HypMis}
The hyperbolic points $\hyp(n)$ for $n\leq18$ in green and
the Misiurewicz-Thurston parameters $M_{\ell,n}=\mis(\ell,n)$ with $\ell+n\leq16$ in red
in different parts of the Mandelbrot set~$\M$, with $\Re c$ increasing from left to right
between images.
}
\end{center}
\end{figure}

\subsection{Pre-periodic or Misiurewicz-Thurston parameters}

For integers $n\geq1$ and $\ell\geq2$, the roots of $q_{\ell,n}$ (defined in \eqref{def:mis_poly}) are parameters $c\in\partial\M$, called \textit{pre-periodic centers}
or \textit{Misiurewicz-Thurston} points, whose critical orbits are pre-periodic, that is it becomes periodic of period $n$ after the first $\ell$ steps.
For $\ell\in\{0,1\}$, one can check immediately that
\begin{equation}\label{eq:hypCase}
q_{0,n}(z) = p_n(z) \qquad\text{and}\qquad q_{1,n}(z) = p_n^2(z).
\end{equation}
The dynamical reason for the second identity in~\eqref{eq:hypCase} is that $0$ is the only pre-image of~$c$ under $f_c$
so a pre-periodic orbit of type $(1,n)$ starting at zero actually loops back to zero, so is periodic of period $n$.
The polynomial $q_{\ell,n}(z)$ is divisible by $q_{\ell,k}(z)$ for any divisor $k$ of $n$ and by $q_{\ell',n}(z)$ for any $\ell'\leq\ell$.
Some of the roots of $q_{\ell,n}(z)$ have multiplicity; however, the polynomial
\begin{equation}\label{eq:polySimple}
s_{\ell,n}(z)=\frac{q_{\ell,n}(z)}{q_{\ell-1,n}(z)} \in \Z[z]
\end{equation}
has simple roots \cite{BUFF2018}, \cite[Lemma~3.1]{HT15}. 
Douady-Hubbard \cite{DOUHUB82, DOUHUB84} have shown that the set of all pre-periodic points is dense in the boundary of $\M$.
Visually, those points are either branch tips, centers of spirals or points where branches meet (see Fig.~\ref{fig:HypMis}).

\medskip
By analogy with the hyperbolic case, let us define the set of \textit{Misiurewicz points of type $(\ell,n)$}
as the subset of $q_{\ell,n}^{-1}(0)$ whose dynamical parameters are exactly $\ell$ and $n$; in other terms:
\begin{equation}\label{def:mis}
\mis(\ell,n) \defequal \left\{ z\in q_{\ell,n}^{-1}(0)
\left|\begin{array}{l}
q_{\ell-1,n}(z)\neq 0,\\[3pt]
\forall k\in\div(n)^\ast, \: q_{\ell,k}(z)\neq 0
\end{array}\right.\right\}.
\end{equation}
We call $\ell+n$ the \textit{order} of $\mis(\ell,n)$ because $q_{\ell,n}$ is a polynomial of degree $2^{\ell+n-1}$.
The reduced polynomial whose roots are exactly $\mis(\ell,n)$ is denoted by
\begin{equation}\label{eq:mis_red}
m_{\ell,n}(z) \defequal \prod\limits_{r\in\mis(\ell,n)}(z-r) \in \Z[z].
\end{equation}
Note that $\mis(0,n)=\hyp(n)$ and $\mis(1,n)=\emptyset$ because of~\eqref{eq:hypCase}.
The count of the Misiurewicz points has been established by B.~Hutz and A.~Towsley \cite[Cor.~3.3]{HT15}:
\begin{equation}\label{eq:MisCount}
\left|\mis(\ell,n)\right| = \Phi(\ell,n)\left|\hyp(n)\right|
\end{equation}
where
\[
\Phi(\ell,n) = \begin{cases}
1 & \text{if } \ell = 0,\\
2^{\ell-1} -1 & \text{if } \ell \neq 0 \text{ and }n\vert \ell-1,\\
2^{\ell-1} & \text{otherwise.}
\end{cases}
\]

\medskip
To get a complete factorization of $q_{\ell,n}(z)$ in terms of~\eqref{eq:hyp_red} and~\eqref{eq:mis_red},
one needs to understand the multiplicity of the hyperbolic factors. We claim the following result.

\begin{thm}\label{thm:fact_mis}
For $\ell\in\N$ and $n\in \N^\ast$, one has
\begin{equation}\label{eq:MisFactor}
q_{\ell,n}(z) = \prod_{k\vert n} \left( h_k(z)^{\eta_\ell(k)} \prod_{j=2}^{\ell} m_{j,k}(z) \right)
\end{equation}
where the multiplicity $\eta_\ell(k)$ is given by\footnote{In~\eqref{eq:defMultHyp}, the floor function is denoted by $\lfloor x \rfloor = n$ if $n\in\Z$ and $x\in [n,n+1)$.}
\begin{equation}\label{eq:defMultHyp}
\eta_\ell(k) \defequal \left\lfloor \frac{\ell -1}{k} \right\rfloor + 2.
\end{equation}
In other words, the roots of $q_{\ell,n}$ are composed of the points $\hyp(k)$ for any divisor $k$ of $n$,
which are roots with multiplicity $\eta_\ell(k)$, and of the points $\mis(j,k)$ for $2\leq j\leq \ell$, which are simple roots.
\end{thm}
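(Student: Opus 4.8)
The plan is to prove \eqref{eq:MisFactor} by analyzing the dynamical meaning of the roots of $q_{\ell,n}$ and carefully bookkeeping multiplicities. First I would recall the dynamical characterization: $z$ is a root of $q_{\ell,n}(z)=p_{\ell+n}(z)-p_\ell(z)$ precisely when $f_z^{\ell+n}(0)=f_z^{\ell}(0)$, i.e.\ the critical orbit of $f_z$ satisfies $p_{\ell+n}(z)=p_\ell(z)$. Such a $z$ has a critical orbit that is eventually periodic, entering a cycle whose exact period $m$ divides $n$ after an exact pre-period $a$ with $a\le\ell$ and $a\equiv\ell\pmod{m}$ (the congruence coming from the fact that once inside the cycle, $p_{\ell+n}=p_\ell$ forces $n$ to be a multiple of $m$ and $\ell-a$ to be a multiple of $m$). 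This shows set-theoretically that the zero set of $q_{\ell,n}$ is the disjoint union of $\mis(a,k)$ over $k\mid n$ and admissible $a$, where $\mis(0,k)=\hyp(k)$ and $\mis(1,k)=\emptyset$; the admissible pre-periods for a cycle of length $k$ are $a=0$ (periodic case, requires $k\mid n$) together with all $a$ with $2\le a\le\ell$ and $k\mid \ell-a$. Grouping the periodic contributions $\hyp(k)$ and using \eqref{eq:hyp_red}, \eqref{eq:mis_red} this already yields the factorization \emph{as sets of roots}; the remaining and genuine work is the multiplicities.

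For the multiplicities I would split into two claims. The first is that every point of $\mis(j,k)$ with $j\ge 2$ is a \emph{simple} root of $q_{\ell,n}$. This I would deduce from \eqref{eq:polySimple}: since $s_{\ell,n}=q_{\ell,n}/q_{\ell-1,n}$ has simple roots \cite{BUFF2018}, \cite[Lemma~3.1]{HT15}, and a point of exact type $(j,k)$ with $j\le\ell$ is a root of $q_{\ell,n}$ but, when $j=\ell$, is \emph{not} a root of $q_{\ell-1,n}$, such a point is already simple in $q_{\ell,n}$; for $j<\ell$ one iterates the argument, noting that the multiplicity of a given point in $q_{\ell,n}$ versus $q_{\ell-1,n}$ can increase by at most the order of vanishing of $s_{\ell,n}$ there, which is $0$ or $1$, and a telescoping/induction on $\ell$ pins it to exactly $1$ as soon as $j\ge 2$ (using that $\mis(j,k)$ points first appear, with multiplicity one, at stage $\ell=j$). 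The second claim is the multiplicity $\eta_\ell(k)=\lfloor(\ell-1)/k\rfloor+2$ of each hyperbolic center $r\in\hyp(k)$. Here I would compute the order of vanishing of $q_{\ell,n}(z)=p_{\ell+n}(z)-p_\ell(z)$ at $z=r$ directly. Since $p_k(r)=0$ and the roots of $p_k$ are simple, $p_k$ vanishes to order $1$ at $r$; writing $p_{\ell}$ and $p_{\ell+n}$ in terms of compositions and using the recursion $p_{m+1}=p_m^2+z$, one shows inductively that $p_{m}(z)$, for $m$ in a fixed residue class mod $k$, has a controlled order of vanishing at $r$, and that $p_{\ell+n}-p_\ell$ picks up one extra order of cancellation for each full block of length $k$ fitting into the pre-period, plus a base contribution of $2$. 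Concretely, the count $\lfloor(\ell-1)/k\rfloor+2$ should come out as: a $+2$ from the two ``boundary'' cancellations (the orbit enters the $k$-cycle and the derivative along the cycle contributes, combined with the fact that $q_{1,k}=p_k^2$ already has $r$ as a double root by \eqref{eq:hypCase}), and a $+1$ for each of the $\lfloor(\ell-1)/k\rfloor$ additional times the pre-periodic part wraps a full period, which one sees by differentiating the relation $p_{m}=p_{m-k}\circ(\text{$k$ steps})$ and evaluating at a point where the $k$-cycle multiplier equals... here at a \emph{center} the multiplier is $0$, which is exactly what makes these orders of vanishing jump.

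A cleaner route for the $\hyp(k)$ multiplicity, which I would actually prefer to write up, is an induction on $\ell$ using the elementary identity $q_{\ell,n}(z)=\bigl(p_{\ell+n-1}(z)+p_{\ell-1}(z)\bigr)\,q_{\ell-1,n}(z)$, valid because $p_{m+1}-p_{m'+1}=(p_m+p_{m'})(p_m-p_{m'})$. This immediately gives the recursion $\operatorname{ord}_r q_{\ell,n} = \operatorname{ord}_r q_{\ell-1,n} + \operatorname{ord}_r\bigl(p_{\ell+n-1}+p_{\ell-1}\bigr)$, and the last term equals $1$ exactly when $k\mid(\ell-1)$ — because $p_{\ell+n-1}(r)+p_{\ell-1}(r)=0$ iff $p_{\ell+n-1}(r)=-p_{\ell-1}(r)$, and along the $k$-cycle through the critical value the two orbit points coincide, up to sign, precisely at the period; more carefully, $p_{\ell+n-1}(r)+p_{\ell-1}(r)$ vanishes iff $\ell-1\equiv 0\pmod k$ given that $p_{\ell-1}(r)$ traverses the (critical value of the) $k$-cycle and $0$ is one of those points only at multiples of $k$ — and the vanishing is then simple because $r$ is a simple root of $p_k$. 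Summing this recursion from $\ell=1$ down (with base case $\eta_1(k)=2$ from $q_{1,n}=p_n^2$ and $p_k\mid p_n$, $p_k$ having simple roots) telescopes to $\eta_\ell(k)=2+\#\{\,2\le j\le\ell : k\mid(j-1)\,\}=2+\lfloor(\ell-1)/k\rfloor$, which is exactly \eqref{eq:defMultHyp}. Finally I would check the degree bookkeeping: the right-hand side of \eqref{eq:MisFactor} has degree $\sum_{k\mid n}\bigl(\eta_\ell(k)|\hyp(k)| + \sum_{j=2}^\ell|\mis(j,k)|\bigr)$, which by \eqref{eq:HypCount} and \eqref{eq:MisCount} must equal $2^{\ell+n-1}=\deg q_{\ell,n}$; this equality is a consequence of the already-established identities and also serves as a sanity check that no roots have been missed. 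The main obstacle I anticipate is making the ``$p_{\ell+n-1}(r)+p_{\ell-1}(r)=0 \iff k\mid(\ell-1)$, and simply'' step fully rigorous: one must rule out spurious extra cancellations, i.e.\ confirm that $p_m+p_{m'}$ has only simple zeros at hyperbolic centers of the relevant period, which should follow from $p_k'(r)\neq 0$ together with the chain rule along the (super-attracting, multiplier-zero) cycle, but deserves a careful argument rather than a hand-wave.
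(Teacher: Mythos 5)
Your preferred ``cleaner route'' is essentially the paper's proof: the identity $q_{\ell,n}=(p_{\ell+n-1}+p_{\ell-1})\,q_{\ell-1,n}$ obtained by factoring a difference of squares, the induction on $\ell$ with base case $q_{1,n}=p_n^2$, the observation that $h_k$ divides $p_{\ell-1}$ iff $k\mid(\ell-1)$ (since $p_{\ell+n-1}(r)=p_{\ell-1}(r)$ when $r\in\hyp(k)$, $k\mid n$), the recursion $\eta_\ell(k)=\eta_{\ell-1}(k)+\mathbf{1}_{k\mid(\ell-1)}$, and the degree check against \eqref{eq:HypCount}, \eqref{eq:MisCount}. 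Your flagged concern at the end is, however, a genuine gap: showing that the extra factor $p_{\ell+n-1}+p_{\ell-1}$ vanishes \emph{only to first order} at $r\in\hyp(k)$ when $k\mid(\ell-1)$ does not follow from ``$p_k'(r)\neq0$ plus the chain rule along the super-attracting cycle'' in any transparent way, because $p_m'(c)$ mixes dynamical and parameter derivatives (compare \eqref{eq:DerParamDyna}). The paper settles it with a dedicated Lemma (Lemma~\ref{LM}): whenever $k$ divides both $\ell-1$ and $n$, one has $h_k^2\mid q_{\ell-1,n}$, proven by reducing to $q_{k,jk}$ and iterating the difference-of-squares identity to exhibit the factor $q_{1,jk}=p_{jk}^2$. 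Once $h_k^2\mid q_{\ell-1,n}$ is known, $q_{\ell-1,n}'(r)=0$ forces $p_{\ell+n-1}'(r)=p_{\ell-1}'(r)$, hence $(p_{\ell+n-1}+p_{\ell-1})'(r)=2p_{\ell-1}'(r)\neq0$ since $p_{\ell-1}$ has simple roots; this is exactly the rigorous version of the step you hand-waved. Your sketch of the simplicity of the $\mis(j,k)$ factors via $s_{\ell,n}$ having simple roots is on track but also stated loosely; the paper simply reads it off from the same recursion, noting that each $m_{\ell,k}$ appears as a new factor at stage $\ell$ and cannot appear squared since $q_{\ell,n}/q_{\ell-1,n}$ has simple roots.
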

\noindent
This result appears to be new. We propose a direct proof in Appendix~\ref{par:factorization}.

\subsection{Harmonic measure}\label{par:harmonicMeasure}

The sets of all hyperbolic-centers and of all Misiurewicz-Thurston points are respectively denoted by:
\[
\hyp =\bigcup_{n\in\N^\ast} \hyp(n) \subset\overset{\circ}{\M}
\qquad\text{and}\qquad
\mis=\bigcup_{\substack{\ell\geq2\\[1pt]n\in\N^\ast}} \mis(\ell,n) \subset\partial\M.
\]
As mentioned above, $\overline{\mis} = \partial\M \subset \overline{\hyp}$.
The density measures of $\hyp$ and $\mis$ (\ie the limit of the uniform counting measure on finite subsets of increasing order) both coincide with the \textit{harmonic measure} for $\partial(\M^c)$,
which is the image of the uniform measure on the unit circle under the Riemann map
$\disk(0,1)^c\to \M^c$.
Intuitively, the harmonic measure expresses the asymptotic density of the external rays as they land on the boundary
of the Mandelbrot set.
The support of the harmonic measure is the only subset of $\partial\M$ that is
``visible'' to a random brownian motion starting at $\infty$ and is of Hausdorff dimension~1.
For further details, see~\cite{Mak84}, \cite{LEVIN1990},  \cite{BelSmi04}, \cite{FG2013}, \cite{GV2016b}, \cite{GV2016a}
and Remark~\ref{rmkRiemannMap} below.

\subsection{Scale of the computational challenge}\label{par:scale}

Using the library~\citelib{MLib} associated with this article,
we have completely determined the sets $\hyp(n)$ for all $n\leq 41$ and $\mis(\ell,n)$  for $\ell+n\leq 35$.
Both problems consists in solving polynomial equations, namely~\eqref{def:hyp_poly} or~\eqref{def:mis_poly}, of respective degrees $2^{n-1}$ or $2^{\ell+n-1}$.
Overall, we have sifted through about $3.3\times 10^{12}$ polynomial roots and identified~$2\,725\,023\,424\,662$ distinct parameters of the Mandelbrot set.
About 80\% of them are hyperbolic centers.

\medskip
The tera-polynomial $p_{41}$ has degree $1\,099\,511\,627\,776$ and, as $41$ is a prime number,
\[
p_{41}^{-1}(0) = \hyp(41) \cup \{0\}.
\]
More generally, the deflation of $\hyp(n)$ due to strict divisors of~$n$ constitutes an asymptotically negligible subset
of the roots of $p_n$.

\medskip
The situation is not exactly the same for $\mis(\ell,n)$.
There are $33$ non-hyperbolic pre-periodic polynomials $q_{\ell,n}$ of order $\ell+n=35$, each of degree $17\,179\,869\,184$.
However, the exact count $271\,590\,481\,057$ of $\bigcup\mis(\ell,n)$ for all types of order 35 represents only $47.9\%$ of the total number of roots of
those polynomials. More generally, \eqref{eq:MisCount} ensures that, for large $N$:
\[
\sum_{\substack{\ell+n = N\\\ell\neq 0}}|\mis(\ell,n)| \simeq \frac{N-2}{2} \: |\hyp(N)|.
\]
The deflation of $\mis(\ell,n)$ due to divisors is therefore, asymptotically, 50\% as $\ell+n\to\infty$.

\medskip
To convey a sense of the orders of magnitude involved for storing and processing the roots of polynomials of giga- and tera-scale,
let us underline that a computation that takes, on average, only 1 milli-second per root requires about $12$ days-core for a polynomial of degree $10^9$.
This requirement jumps to $32$ years-core ($\sim 280$k hours-core) for a polynomial of degree $10^{12}$.

\medskip
Our actual computation times are discussed in Section~\S\ref{par:num_results}.
The success of our entreprise is due to the efficiency of our new splitting algorithm for $p_n$ and $q_{\ell,n}$ (see Section~\ref{par:split}), which is bounded by $O(d\log d)$.


\section{A brief review of root finding algorithms}\label{par:roots}

In the rest of the article,
one will assume that $P\in\C[z]$ is a polynomial of degree $d=\deg P\geq1$ and that its roots
$\mathcal{Z}=P^{-1}(0)$ are localized in the disk $\disk(0,r)$.
See Theorem~\ref{thm:rootLoc} in Appendix~\ref{par:mathBackground} to estimate $r$ from the coefficients of $P$.

\bigskip
A \textit{splitting algorithm} is an algorithm that provides the list of all the roots
of $P$ through convergent numerical approximations.
Splitting algorithms can be classified in three broad families: 
methods based on Newton's iterations, root isolation methods
and eigenvalue methods. Let us review them briefly.

\bigskip
We denote by $\eval_d$ the \textit{arithmetic complexity} of evaluating a polynomial of degree $d$.
Horner's method ensures that, in general, $\eval_d  =  O(d)$ when the coefficients are known.
Some evaluation schemes offer a similar complexity with better balanced intermediary computations,
like Estrin's method, which is implemented in the Flint library \citelib{FLINT}, or some variants~\cite{Mor13}.
Other schemes take advantage of multipoint evaluations (see \eg\cite{KS16}).
In a separate article~\cite{AMV21}, we propose a \textit{Fast Polynomial Evaluator} algorithm and its practical implementation~\cite{FPELib}
that brings the average cost of evaluating real and complex polynomials with a fixed precision down to $\eval_d = O(\sqrt{d\log d})$
after a preprocessing phase that can be performed independently of the precision\footnote{In the pre-processing phase of the FPE algorithm,
only the integer part of the base-2 logarithm of the coefficients is needed.} and that costs $O(d\log d)$.

For iteratively defined polynomials (\eg for $p_n$, which is of degree $d=2^{n-1}$),
one obviously has $\eval_d=O(\log_2 d)$.
To keep comparisons fair between algorithms, we will therefore explicitly factor out the cost of evaluations whenever possible.

\begin{remark}\label{rmk:costCheckList}
If a polynomial $P$ has simple roots,
the computational cost of checking that a sorted list of numbers is indeed a list of all
the roots of $P$ is  $O(d \eval_d)$.
To check that a root $z$ is of multiplicity $k$, one computes $P(z)$, $P'(z)$,
\ldots, $P^{(k-1)}(z)$ instead of computing $k$ distinct values of $P$ so the
contribution to the total cost remains of order~$k \eval_d$ provided that each derivative can
also be evaluated in time $O(\eval_d)$. The cost of checking roots thus remains $O(d \eval_d)$.
\end{remark}

\subsection{A word on numerical instabilities}\label{par:wilkinson}

A naive splitting algorithm consists in finding a root of $P$ by any available method, and then reduce
the degree of $P$ by performing a euclidian division.

\bigskip
In practice, this method is plagued by numerical instabilities, illustrated on Wilkinson's polynomial \cite{W84}
\[
W_{20}(x) = \prod_{n=1}^{20}(x-n)
\]
whose largest root is $20^{19}$ times less stable than its smallest root when the coefficients
are perturbed in the canonical basis (though other basis may behave better).
In general, determining the roots of a polynomial from the list of its coefficients is a highly ill-conditioned problem.

Consequently, for very large polynomials, computing the coefficients is not always advisable.
For example, as $p_3(z)=z^4+2z^3+z^2+z$, the largest coefficient of $p_n(z)$ for $n\geq3$ exceeds $2^{2^{n-3}}$
while the smallest one remains $1$.
Storing the coefficients of $p_{41}(z)$ exactly is, at best, impractical and any reasonable approximation
of those coefficients would not be sufficiently precise to compute the roots accurately.

\subsection{Root isolation methods}

Bracketing (or root isolation) consists in identifying an interval of the real line or a domain of the complex plane
where the number of roots of $P$ is prescribed.

\bigskip
Methods based on this principle are usually developed on the
real line where they constitute a textbook application of the intermediary
value theorem (bisection method, or the more involved ITP method).

In the complex plane, the  splitting circle method and the Lehmer–Schur algorithm 
are based on the residue theorem, which implies that, for all $n\in\N$
\[
\sum_{\substack{z\in\Omega\\[2pt] P(z)=0}} m_P(z) z^n = \frac{1}{2i\pi} \int_{\partial\Omega} \frac{P'(z)}{P(z)} z^n dz
\]
for any smooth domain $\Omega$ whose boundary avoids the zeros of $P$ and where $m_P(z)$ denotes the multiplicity
of $z$ as a root of $P$. Successive
refinements of $\Omega$ will either isolate single roots or clusters of close roots that can
be identified using Newton's identities.
Aside from the cost of computing the integral to a sufficient precision,
those methods are plagued by the geometrical problem of finding a ``good'' circle,
\ie one which is proper to initiate a sequence of divide and conquer iterations.

\medskip
Of similar nature is Graeffe's method, where the sequence
\[
P_{n+1}(z^2)=(-1)^d P_n(z)P_n(-z)
\]
is computed from $P_1=P$. The roots of $P_n$ are $\{z^{2^n} \,;\, P(z)=0\}$ and
are thus, generically, exponentially separated from each other. In that case Vieta's formulas can easily
be inverted approximately, which gives the roots of $P_n$ and ultimately, those of $P$.

\bigskip
Root isolation methods excel in finding roots of a given polynomial, but they are not
a splitting algorithms per se. We will not insist on comparing their complexity.

Our algorithm (see \S\ref{par:split} below) presents a strong familiarity with the splitting
circle method: the level lines that we will use provide a tight enclosure of the roots of $P$ by a Jordan curve.
Tighter level sets would enclose the roots in a finite union of disjointed connected sets (see Fig.~\ref{fig:idea} and~\ref{fig:MVSplitAlgo}). 

\subsection{Eigenvalue algorithms}
Splitting a polynomial $P$ can always be restated as the question of finding all the eigenvalues
of the companion matrix $A_P\in\mathcal{M}_d(\C)$, at least provided that the coefficients of $P$ are known.
\[
A_P =  
\begin{pmatrix}
0 & 1 & 0 & \ldots & 0\\
0 & 0 & 1 & \ddots & \vdots\\
\vdots &   & \ddots & \ddots & 0\\
0 & \cdots & \cdots & 0  & 1\\
c_0 & c_1 & \cdots & \cdots & c_{n-1} 
\end{pmatrix}
\qquad\text{if}\qquad
P(x)=x^n - \sum_{k=0}^{n-1} c_k x^k.
\]
The vector $(1,x,x^2,\ldots, x^{n-1})$ is an eigenvector of $A_P$ if and only if $P(x)=0$.

\bigskip
The complexity of eigenvalue algorithms usually depends on the structure of the matrix under consideration.
For example, the modern version of the QR algorithm for
Hessenberg matrices\footnote{A Hessenberg matrix has zero entries above the first superdiagonal,
which is the case for $A_P$.}
costs $O(d^2)$ per iteration \cite{BDG04}, \cite{CGXZ07}.
The sequence starts with $A_0=A$, the matrix whose spectrum is being computed.
The principle of each step consists in an orthogonal similarity transform $A_{k+1} = Q_k^T A_k Q_k$ where
\[
A_{k} = Q_k R_k \quad\text{with}\quad \begin{cases}
Q_k \text{ orthogonal}\\
R_k \text{ upper triangular}
\end{cases}
\]
As $Q_k^T Q_k= I$, it boils down to computing $Q_k$, $R_k$ and then forming  $A_{k+1} = R_k Q_k$.
The number of iterations used in practice until $A_k$ reaches the Schur form of $A$
(upper triangular) may be $O(d)$ or higher.
Companion matrices belong to the Hessenberg class, so, as
a splitting algorithm, the QR method costs at least $O(d^3)$.

\medskip
Note that the polynomial $P$ is not explicitly evaluated here,
so the factor~$\eval_d$ is not included in the arithmetic complexity.
However, it appears implicitly in each step as a cost of matrices operations.

\begin{remark}
Many applications that require polynomial splitting arise naturally
as an eigenvalue problem. For example, in structural mechanics, the asymptotic stability analysis
of an engineering design calls for the determination of the spectrum of some PDE operator,
which can be approximated by a large band-limited finite-elements matrix.
Most linear algebra algorithms are tailored for the needs of the specific applications, like finding
the extreme eigenvalues or their distribution \cite{ACE09}, or determining the eigenvectors.
They do not always constitute proper splitting algorithms, even though they can occasionally
be repurposed as such.
\end{remark}


\subsection{Newton's iterations}\label{par:Newton}
The core of many iterative splitting algorithms are Newton's iterations:
\begin{equation}\label{eq:Newton}
z_{n+1}=N_P(z_n)
\qquad\text{where}\qquad N_P(z) = z-\frac{P(z)}{P'(z)}\cdotp
\end{equation}
The fixed points of the dynamics of $N_P$ are the roots of $P$. 

\medskip
In the vicinity of each simple root, this method converges bi-exponentially.
More precisely, if $z_\ast=\lim z_n$ with $P'(z_\ast)\neq0$, Taylor's formula  for $P$ can be rewritten
\[
z_{n+1}-z_\ast = \frac{(z_n-z_\ast)^2}{P'(z_n)}\int_0^1 (1-t) P''(t z_\ast + (1-t)z_n) dt.
\]
For $\rho>0$, one defines $C_\rho \defequal \frac{1}{2} \sup |P''(z)| / \inf |P'(z)|$, each extremum
being computed for $z\in\disk(z_\ast,\rho)$. One can always assume that $\rho C_\rho<1$
because $C_\rho \to \frac{1}{2}\left|\frac{P''(z_\ast)}{P'(z_\ast)}\right|\in\R_+$ as $\rho\to0$, so $\rho C_\rho\to 0$.
In that case, if $z_{n_0}\in\disk(z_\ast,\rho)$ then
\begin{equation}\label{newtonQuad}
\forall n\geq n_0,\quad
|z_{n+1}-z_\ast| \leq C_\rho |z_{n}-z_\ast|^2 \quad\ie\quad |z_{n}-z_\ast|\leq C_\rho^{-1}(\rho C_\rho)^{2^{n-n_0}}.
\end{equation}

\medskip
When the root~$z_\ast$  is of multiplicity $\mu\geq2$, then the convergence of Newton's algorithm 
is only exponential. Indeed, one has
\[
N_P'(z) = \frac{P(z)P''(z)}{P'(z)^2} \underset{z\to z_\ast}{\longrightarrow} 1-\frac{1}{\mu} \in \left[\textstyle{\frac{1}{2}},1\right)
\]
so for any $z_0$ in the bassin of attraction
\[
\attract(z_\ast) = \{ z\in\C \,;\, \lim_{n\to\infty} N_P^n(z) = z_\ast\}
\]
the estimate~\eqref{newtonQuad} is replaced, using Taylor's formula for $N_P$, by
\begin{equation}\label{newtonLin}
\frac{z_{n+1}-z_\ast}{z_n-z_\ast} \to 1-\frac{1}{\mu}\cdotp
\end{equation}
In that case, the accelerated scheme
\begin{equation}\label{newtonAcc}
z_{n+1}'=z_n'-\mu\cdot\frac{P(z_n')}{P'(z_n')}
\end{equation}
could be used instead to restore an ultimately bi-exponential behavior in the vicinity of multiple roots.
In practice, however, $\mu$ may be unknown and, even if it was known, this accelerator will completely destroy the
attraction bassins of the simple roots, as illustrated on Figure~\ref{fig:newtonAccel}.
A better accelerator may be K\"onig's method \cite{BufHen03} that replaces~$P/P'$ in~\eqref{eq:Newton}
by ratios of consecutive higher-order derivatives of $1/P$. 

\begin{figure}[H]
\captionsetup{width=.95\linewidth}
\begin{center}
\includegraphics[width=\textwidth]{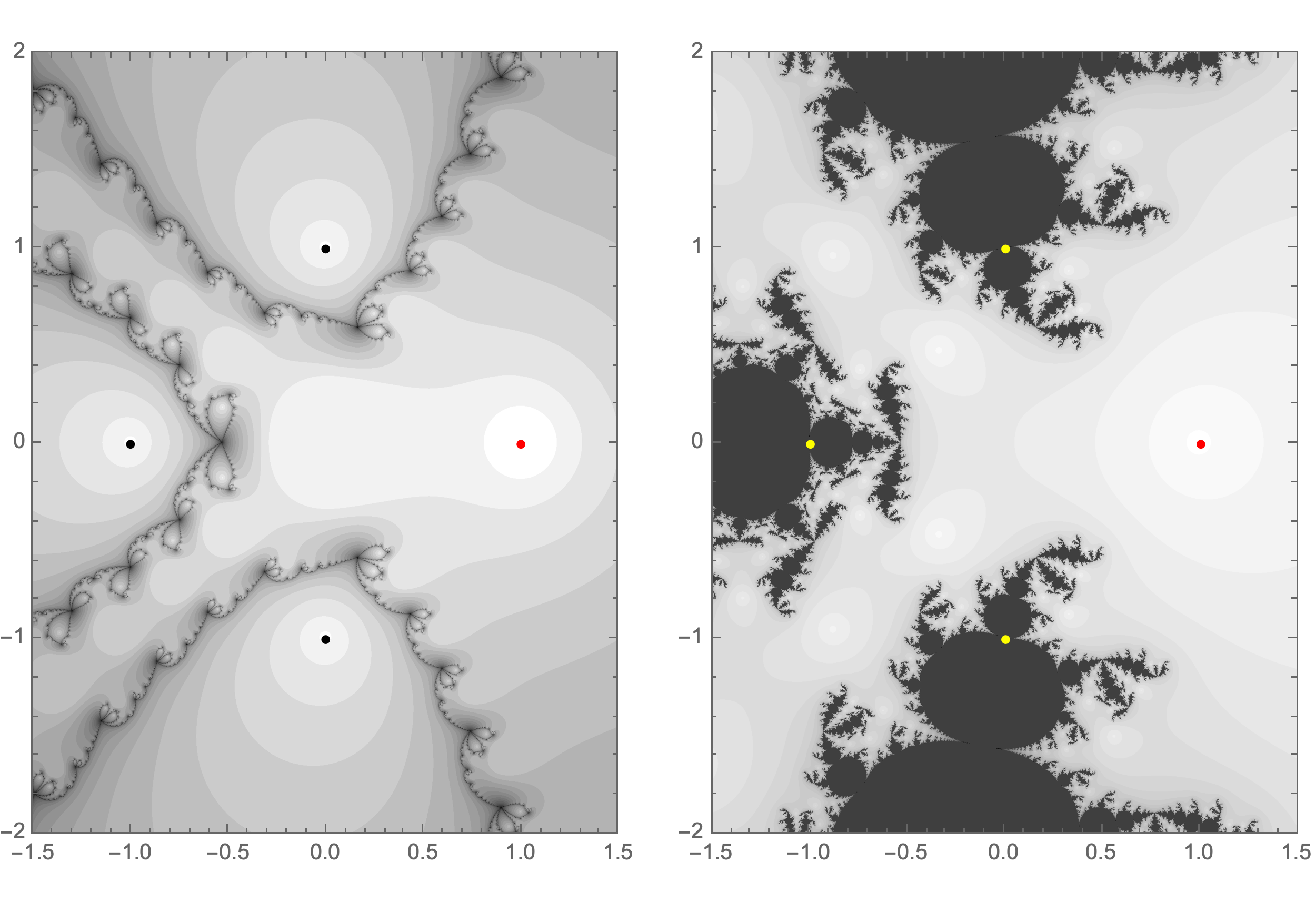}
\caption{\label{fig:newtonAccel}
Attraction bassins and Julia set of the Newton method (left) and of the accelerated method with $\mu=2$ (right)
for $P(z)=(z-1)^2(z+1)(z^2+1)$.
}
\end{center}
\end{figure}

The so called, quasi-Newton variants (Muller, Steffensen) avoid computing the derivative for numerical purposes,
in particular if the method is applied to a general function whose derivative is not known or not easily
computable. A typical example is the secant method.
Another class of quasi-Newton methods involve higher-order derivatives (Halley, Householder,
Laguerre, Jenkins-Traub,\ldots): they give a higher-order of convergence in exchange for a tighter
requirement on the smoothness of the function and higher-order norms in the estimates.
See \eg\cite{Cam19} for a parallel implementation of Laguerre's method.

\bigskip
Transforming Newton's method into a splitting algorithm requires choosing a set of starting points
for which the dynamics will visit every root. The naive approach of findind one root at
a time and factoring it out is both unpractical and numerically unstable (see \S\ref{par:wilkinson}).
Choosing starting points at random may miss some roots entirely.

For a given $z_0\in\C$ for which the sequence converges to~$z_\ast$,
the number of steps required before the sequence enters  the bi-exponential regime \ie $\min\{n \,;\, z_n \in \disk(z_\ast,\rho)\}$
with $\rho$ as in~\eqref{newtonQuad} may be astronomical.
Occasional wild excursions from the vicinity of critical points of $P$ towards~$\infty$
are not the main problem: those trajectories are easily identified and abandoned.
The less understood regime happens when the sequence crosses the mid-range
$\disk(0,r)\backslash \disk(z_\ast,\rho)$ where the sequence may almost stall if the Newton's bassin forms a narrow
gorge, as described in~\cite{HSS2001}; see also Figure~\ref{fig:idea}.
Any theoretical advance on this matter like~\cite{BAS2016} are of interest to improve the
bounds on the complexity of Newton's method as a global splitting algorithm.

The stoping criterion is another critical issue: P.~Henrici~\cite[Chap.~6]{HEN1974}
states that one may stop the Newton method when its steps get smaller then $\varepsilon/d$
where $\varepsilon$ is the precision sought after, in which case the proximity of a root can be proved.
In our algorithm, we stopped much earlier and relied instead on an a-posteriori analysis,
performed in a higher precision and using disk arithmetic to prove that the sequence has entered
the quadratic convergence region (see~\S\ref{par:certif}).

\bigskip
Aberth-Ehrlich method \cite{A73} is a generalisation of Newton's method.
It attempts to find all the roots simultaneously by computing a fixed point 
for $z_n=(z_{n,j})\in \C^d$~:
\[
z_{n+1,j} = z_{n,j} - \frac{ 1 }{
\frac{P'(z_{n,j})}{P(z_{n,j}) } - \sum\limits_{k\neq j} \frac{1}{z_{n,j}-z_{n,k}} }\cdotp
\]
The idea is that each root repels  the others with an electrostatic potential,
which prevents the Newton's dynamics to converge multiple times on a single
root while missing another one completely.
The components of the starting vector $z_0\in\C^d$ are chosen within~$\disk(0,r)$
and presumed to lack any symmetries that would accidentally match a symmetry of the set of roots,
which could prevent convergence.

The Aberth method is at the heart of the implementation of \texttt{MPSolve} \cite{BG2000,BR2014},
which is the reference software for splitting polynomials using arbitrary precision arithmetic.
A massively parallel implementation \cite{GSKCx} of this algorithm
on multiple GPUs can run up to a few millions of roots.

At each step, computing the electrostatic potential requires $O(d^2)$ operations,
while the evaluation of all $P'(z_{n,j})/P(z_{n,j})$ costs $O(d\eval_d)$.
The typical number of steps to reach a given approximation depends  strongly on the choice
of initial points \cite{G86}, \cite{F89}, \cite{B96}; in practice, it takes at least $O(d)$ steps,
with an overhead of $O(d \log d)$ operations to compute a ``good'' initial vector $z_0\in\C^d$.
The overall cost of Aberth's method is thus~$O(d^3)$ operations
regardless of the actual value of $\eval_d$.

In practice, the Aberth method has good global convergence properties.
However, a theoretical foundation for this global convergence remains an open problem.

\bigskip
Weierstrass method (also known as Durand–Kerner) is the following variation:
\[
z_{n+1,j} = z_{n,j} -\frac{P(z_{n,j})}{\prod\limits_{k\neq j} (z_{n,j}-z_{n,k})}\cdotp
\]
On a theoretical level, it was recently proven \cite{RST2020}, \cite{KI04} that the Weierstrass method
may fail generically, \ie on an open set of polynomials for an open set of intialization points.
This is a sharp reminder that one must pay close attention to the bassin of attraction of
each numerical method.

\subsection{The algorithm of Hubbard, Schleicher and Sutherland}\label{par:HSS}

The global structure of the bassins of attraction of Newton's method is quite intricate.
It has been extensively studied in~\cite{SU1989}, \cite{HSS2001}, \cite{SS2017},
where a remarkable universal algorithm for splitting $P$ is explained.
Let us consider a root $z_\ast$ of $P$
and denote by $\attract(z_\ast)\subset\CC$ the basin of attraction for Newton's method.

\bigskip
The connected component of~$z_\ast$ in~$\attract(z_\ast)$ is called the immediate basin of attraction and is denoted by $\mathcal{U}(z_\ast)$.
It admits as many \textit{access} to infinity (\ie homotopy classes of simple arcs connecting~$z_\ast$ to~$\infty$)
as the number of critical points of $N_P$ within it, counted with multiplicity (including the root itself as critical point if $z_\ast$ is a multiple root). The \textit{girth} (or \textit{channel width}) at
$z\in \mathcal{U}(z_\ast)$ is  the length of the connected component of $z$ in
\[
\{z' \in \mathcal{U}(z_\ast) \,;\,  |z'| = |z|\}.
\]
The girths admits a universal bound from below,
which implies that taking about $2.47 d^{3/2}$ points uniformly along a wide enough circle ensures
that there is at least one point per basin of attraction.
This property turns Newton's method into a trivially parallelizable global splitting algorithm. 

\medskip
A more involved analysis based on the conformal modulus of the access channels
(roughly speaking: the ratio between the girth of the channel to the amplitude
of one Newton step from that point) allows both a reduction in the number of points involved,
and better estimates.
A universal starting mesh, given in~\cite{HSS2001},
is composed of about $\alpha\simeq 4.16 d\log d$ points taken on $\beta \simeq 0.27\log d$ concentric circles.
\begin{thm}[Splitting algorithm of \cite{HSS2001}]\label{thm:HSS}
Given a polynomial $P(z)$ of degree $d$ whose roots are contained in $\disk(0,r)$ and
integers $\alpha\geq 4.16 d\log d$, $\beta \geq 0.27\log d$, the mesh
\[
\mesh_r(d)= \bigcup_{1\leq\nu\leq\beta} r (1+\sqrt{2})\left(1-\frac{1}{d}\right)^{\frac{2\nu-1}{4\beta}} \U_{\alpha}
\]
contains a point in each bassin of attraction of the roots of $P$, for Newton's method.
Here, $\U_\alpha = \left\{ e^{2ik \pi/\alpha} \,;\, 0\leq k<\alpha\right\}$ denotes the roots of unity of order $\alpha$.
\end{thm}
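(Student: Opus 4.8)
The plan is to follow the strategy of Hubbard, Schleicher and Sutherland, which rests on understanding the geometry of the immediate basin $\mathcal{U}(z_\ast)$ of each root through its \emph{access channels} to infinity. First I would recall the basic structural facts: for Newton's map $N_P$ of a polynomial of degree $d$, the point $\infty$ is a repelling fixed point, and each immediate basin $\mathcal{U}(z_\ast)$ is an unbounded simply connected domain whose number of accesses to $\infty$ equals the number of critical points of $N_P$ it contains (counted with multiplicity); summing over all roots, the total number of accesses is at most $d-1$. Along each access one finds an infinite arc $\gamma$ on which $N_P^n(z)\to z_\ast$, and the key geometric object is the \emph{channel}: the nested family of connected arcs $\{z'\in\mathcal{U}(z_\ast):|z'|=t\}$ through which $\gamma$ passes, for $t$ ranging over $(|z_\ast|,\infty)$.

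The heart of the argument is a \emph{conformal modulus estimate}. Consider the portion of a channel lying in an annulus $\{r\le |z|\le r(1+\sqrt2)\}$ (or more precisely in the radial range dictated by the mesh). Using the Riemann map from this channel-subregion to a standard rectangle, one shows that its conformal modulus is bounded below by a universal constant. This is where the pull-back of the hyperbolic metric, or equivalently a length--area (extremal length) argument applied to the family of curves crossing the channel, comes in: the amplitude of a single Newton step $|N_P(z)-z|$ controls how much one can ``descend'' the channel, and comparing this to the girth gives a lower bound on the modulus. One then partitions the radial interval $[r(1+\sqrt2)(1-1/d)^{1/2},\, r(1+\sqrt2)]$ into $\beta\simeq 0.27\log d$ sub-annuli at the radii $r(1+\sqrt2)(1-1/d)^{(2\nu-1)/(4\beta)}$; the modulus lower bound forces each channel to contain, within at least one of these sub-annuli, an arc whose angular extent is at least $\simeq 2\pi/(4.16\, d\log d)\cdot$(universal factor). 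Placing $\alpha\simeq 4.16\, d\log d$ equally spaced points on each of the $\beta$ circles then guarantees that every such arc captures at least one mesh point, and any mesh point on the channel is, by construction, in the basin of attraction of the corresponding root. Since every root's immediate basin has at least one access, every root is hit.

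I would expect the main obstacle to be the quantitative modulus/extremal-length estimate and its calibration against the explicit constants $4.16$ and $0.27$ and the radius factor $1+\sqrt2$: these come from optimizing a chain of inequalities (a lower bound on the girth via Koebe-type distortion for the immediate basin, an upper bound on the Newton step, and the subdivision of the annulus), and getting the constants exactly as stated requires care rather than new ideas. A secondary subtlety is handling multiple roots of $P$ uniformly, since then $z_\ast$ is itself a critical point of $N_P$ contributing to the access count, and the estimates must be stated in a way that is insensitive to multiplicity. Since a full self-contained proof is long and is carried out in~\cite{HSS2001} (with refinements in~\cite{Sch2023}), I would present the argument at the level of these key steps and refer to those sources for the detailed constant-chasing.
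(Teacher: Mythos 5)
The paper does not prove Theorem~\ref{thm:HSS}; it states it as a result imported from~\cite{HSS2001} and only sketches the idea in the surrounding text (girth of the access channels, conformal modulus versus the amplitude of a Newton step). Your proposal correctly reproduces that sketch and defers the constant-chasing to~\cite{HSS2001} and~\cite{Sch2023}, so it is consistent with the paper's treatment.
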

\noindent
Overall, $| \mesh_r(d) |\simeq 1.11 d \log^2 d$.
For $d\simeq 2^{32}$, the mesh is build upon $\beta=6$ circles.
The computational improvement over a one-circle approach, which requires $O(d^{3/2})$ starting points,
becomes advantageous when $d\geq 2^{14}$ (see Fig.~\ref{fig:gridToPointRatio}).

\subsection{Practical considerations regarding the algorithm of \cite{HSS2001}}

In this subsection, we discuss some practical aspects of the implementation of Theorem~\ref{thm:HSS},
seen as a splitting algorithm, that will lead (for the polynomials $p_n$ and $q_{\ell,n}$) to our
improved algorithm presented in Section~\ref{par:split}.

The overall complexity of J.H.~Hubbard, D.~Schleicher and S.~Sutherland splitting algorithm 
is $\alpha\beta\gamma$ Newton steps (\ie $\alpha\beta\gamma V_d$ arithmetic operations),
where $\alpha\beta=| \mesh_r(d)|$ is the number of starting points and $\gamma$ is the average
number of Newton iterations that are actually necessary to compute the roots with a precision $\varepsilon$.
The question of estimating~$\gamma$ is two-fold.
It obviously depends on the minimal separation of the roots of $P(z)$,
which dictates the precision $\varepsilon$ of the (final) computations.
In \cite{HSS2001}, no upper bound on $\gamma$ is given. However, a practical number of iterates is recommended:
\begin{equation}\label{eq:reccomendedGamma}
\gamma \simeq d \log\left(\frac{r}{\varepsilon}\right)
\end{equation}
assuming that roots are simple.
On the other hand, $\gamma$ is also profoundly impacted by the mid-range dynamics of the Newton map,
\ie the region $\disk(0,r(1+\sqrt{2}))\backslash \disk(z_\ast,\rho)$ with $\rho$ as in~\eqref{newtonQuad},
where the method can spend a substantial amount of time.
The key observation that leads to our algorithm is that,
in that region, Newton's method appears to be computing level lines
over and over in a very inefficient way (see Fig.~\ref{fig:idea}).

\begin{figure}[H]
\captionsetup{width=.75\linewidth}
\begin{center}
\includegraphics[width=.85\textwidth]{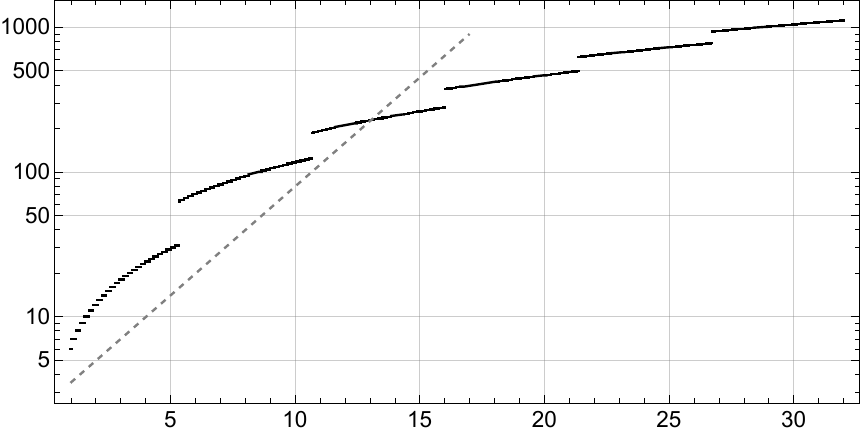}
\caption{\label{fig:gridToPointRatio}\small
Gridpoints-per-root ratio for $\mesh_r(d)$ as a function of $\log_2(d)$.
Comparison with the one-circle grid (dashed line).
}
\end{center}
\end{figure}

\subsubsection{On the scalability of the number of gridpoints per root}

The algorithm of~\cite{HSS2001} is remarkably efficient when $d\leq 2^{20}$ \ie for polynomials of degree up to a few millions.
However, in the giga- and tera-scale, the growth of the gridpoint-per-root ratio, \ie $\alpha\beta/d$,
may add a substantial overhead to the cost of the computation (see Fig.~\ref{fig:gridToPointRatio}).
In other terms, having the certainty to catch all the roots comes at a cost of at least $500$ redundant finds for each root, on average.
Moreover, eliminating those redundant finds adds, in practice, additional search and sorting costs.

\noindent
This objection is, however, not completely fair: in practice, one can start the search on a dyadic thinning of $\mesh_r(d)$
and gradually step up to using the full mesh if roots are still missing after each pass (see \cite[\S9]{HSS2001}).

\begin{remark}
If $P$ is known to only have real roots, then the mesh $\mesh_r(d)$ can be reduced to
$1.3d$ points on a single semi-circle, which gives a constant gridpoints-per-root ratio.
\end{remark}

\subsubsection{On root separation} 

Let $\mathcal{Z} \defequal P^{-1}(0)$. For any~$z \in \mathcal{Z}$, let us introduce
\begin{equation}
\delta_P(z) = \min\left\{ |z-z'| \,;\, z'\in\mathcal{Z} \text{ and } z'\neq z \right\}.
\end{equation}
In order to be able to distinguish the root $z$ from its closest neighbor, 
it is necessary to compute it with a precision $\varepsilon< \delta_P(z)/2$.
The minimal separation between the roots of the polynomial $P$ is
\begin{equation}
\Delta_P = \min\left\{ \delta_P(z) \,;\, z\in \mathcal{Z} \right\},
\end{equation}
For example (see Fig.~\ref{fig:rootSeparation} and~\S\ref{par:HardwareLimit}), for hyperbolic polynomials, one has
$\Delta_{p_n} \propto 4^{-n}$ which means that, lacking any further information on the local fluctuations of $\delta_{p_n}(z)$, one should
request a precision $\varepsilon$ of order~$O(4^{-n})$ for all the roots, which, according to~\eqref{eq:reccomendedGamma}
requires $\gamma =O(n 2^n)$ iterations of the Newton map. The cardinality of the starting mesh
is at least $d=2^{n-1}$ and may skyrocket up to $|\mesh_2(d)| = O(n^2 2^n)$ should
the finest mesh be required. The total number of Newton steps thus ranges from $O(n 2^{2n})$ to $O(n^3 2^{2n})$.
As the evaluation of $p_n$ requires $V_d=O(n)$ arithmetic operations, applying~\cite{HSS2001} to split $p_n$
requires $O(n^\kappa 2^{2n})$ arithmetic operations with $2\leq \kappa\leq 4$.

\medskip
We have no reason to believe that $p_{41}$ would behave especially badly. However, the previous discussion gives a range 
of $10^{27}$ to $10^{31}$ arithmetic operations, which means that, even if we disregard the need to compute at a
precision much higher than 64 bits and forget all the extra operations required to handle the large volume of data,
it would take, optimistically, 31 years to split $p_{41}$ on a exa-scale computer like \textit{Frontier}.
This optimistic estimate is astronomical and marks the splitting of $p_{41}$ as a problem clearly out of reach of the current state of the art.

\subsubsection{On the mid-range dynamics of the Newton map}

A pertinent quantity for the analysis of the Newton map is the number of Newton steps  that are necessary
to bridge the level set $A$ to the level set $B$ (with $A>B$):
\begin{equation}
\gamma_{A,B} (P,z_0)= \min \{ k \,;\, |P(z_k)|\leq B \} -  \max \{ k \,;\,|P(z_k)|\geq A \}.
\end{equation}
For each root $z_\ast$, the dynamics of the Newton map develops in two stages.
The first stage (called mid-range) connects the starting mesh $\mesh_r(d)$ to the edge of $\disk(z_\ast,\rho)$, which requires
$\gamma_1 \simeq \gamma_{A,B}(P,z_0)$ Newton steps
where $A=|P(z_0)|$ and $B=\max\{ |P(z)|  \,;\, z\in \disk(z_\ast,\rho)\}$ with $\rho$ as in~\eqref{newtonQuad}.
In practice,~$\gamma_1 \geq O(d)$ because,
according to \cite[Lemma 3]{HSS2001}, Newton steps satisfy
\begin{equation}
\frac{|z|-r}{d}  < |N_P(z)-z| < \frac{|z|+r}{d}
\end{equation}
 for $|z|\geq r>0$.
A second stage connects the edge of $\disk(z_\ast,\rho)$ to
$\disk(z_\ast,\varepsilon)$ where the desired precision is reached.
As it happens entirely in the quadratic regime~\eqref{newtonQuad},
this phase requires and additional $\gamma_2=O(\log \log \frac{\rho}{\varepsilon})$ refining steps,
or $O(\log \frac{\rho}{\varepsilon})$ for multiple roots.
Overall $\gamma = \gamma_1+\gamma_2$.

\begin{remark}
As discussed in \cite[\S9]{{HSS2001}},
starting instead from a rescaled mesh or radius $\lambda r$ for some $\lambda >1$
would allow one to slightly decrease the upper bound on the cardinality of the mesh if $\lambda$ is chosen close to 1,
but it would dramatically increase $\gamma_1$ by $O(d\log\lambda)$.
\end{remark}

\medskip
Computing $\gamma_{A,B}$ precisely in full generality seems hopeless (see however \cite{BAS2016} and
Theorem~\ref{thm:levLineFlow} below).
However, one may experiment along the real line to get a feeling of the orders of magnitude.
For the hyperbolic polynomials $p_n$,
the Newton dynamics starting from  the point
\[
z_0 = \frac{1}{4}+\theta_n\in \R_+
\quad\text{such that}\quad
p_n\left(\frac{1}{4}+\theta_n\right) = 5
\]
always converges to $z_\ast = 0$. The first stage of the dynamics takes
\[
\gamma_1= \inf \{ k\in\N \,;\, z_k\in\disk(0,1/4) \}
\]
steps because for $k\geq \gamma_1$, the convergence of the sequence $(z_k)_{k\in\N}$ becomes quadratic.
We checked numerically that $\gamma_1\leq 6$ for any $n\leq 38000$.
On the other hand, starting another sequence of Newton iterations from $\widetilde{z_0}=1$ (as would be the case if we start from $\mesh_r(2^{n-1})$), one
would get
\[
\widetilde\gamma_1 =  \inf \{ k\in\N \,;\, p_n(\widetilde z_k)\leq 5 \} \geq O(2^n)
\]
because most Newton steps are of order $2^{-n}$.
This simple observation suggests that, instead of starting from a universal and too far
mesh $\mesh_r(2^{n-1})$,
it may be more appropriate to start from a mesh along a \textit{level line}
\[
|p_n(z)|=L.
\]
This idea is the key to the new algorithm that we describe in the next section.


\section{A new splitting algorithm based on level-lines}\label{par:split}

In this section, we expose our new splitting algorithm at the general level,
and illustrate it on the search for hyperbolic centers and Misiurewicz points of $\M$.
Again, throughout this section $P$ denotes a polynomial in $\C[z]$ of degree $d=\deg P$
whose roots~$\mathcal{Z}=P^{-1}(0)$ are located within the disk~$\disk(0,r)$.
The set of critical points is denoted by~$\crit{P}$.
For simplicity, we will also assume that $P$ is monic. 

\medskip
Brute force is not a viable option when $d\gtrsim 10^{12}$.
For example, one observes that the minimal separation between roots of $p_k$ changes
dramatically with the position in~$\M$ and that only a few roots contribute to the realisation
of the overall minimum (see Fig.~\ref{fig:separationHistogram}). The corresponding Newton bassins
are narrow channels. As illustrated in the previous section,
using a uniform starting grid fine enough to capture all of those
narrow channels is an algorithmic dead-end.

\subsection{General principle}

The solution we propose is that of an adaptative mesh refinement.
Trying to reverse-engineer the dynamics of the Newton map on the fly to guess where to add points
seems mostly out of reach and has only been attempted by~\cite{RSS2017}-\cite{Sch2023} who use a clever
geometric estimator of the deformation of the trajectories for that purpose; however, if a few roots end up
being missed, it is not clear how to refine the computation.

What we need instead is a systematic way to put more grid points around regions of high root density
and spend less time visiting the regions of lower root density.
We want to parametrise a curve encircling the roots of~$P$ such that uniform subdivisions
of its parameter induce the desired adaptative mesh refinement.

\bigskip
The answer is given by the interaction between the two ordinary differential equations that rule Newton's flow and the level lines.
\textit{Newton's flow} is defined by the ODE:
\begin{equation}\label{eq:NewtonFlow}
\zeta'(t) = -\frac{P(\zeta(t))}{P'(\zeta(t))}
\end{equation}
The evolution is iso-angle (\ie $\arg P$ is constant along trajectories) and the modulus is an exponentially decaying Lyapunov function
because any solution satisfies:
\begin{equation}\label{eq:NewtonFlowLyapunov}
P(\zeta(t)) = e^{-(t-t_0)} P(\zeta(t_0)).
\end{equation}
The stationary solutions are the roots.
We will call  \textit{Newton's rays} or \textit{iso-angle rays} the non-stationary solutions of the Newton flow.
The non-critical iso-angles, \ie
\begin{equation}
\arg P(\zeta(t_0))\notin \arg \left[ P(\crit{P})\backslash\{0\} \right]
\end{equation}
stem global solutions, both forwards and backwards.
Forwards, non-stationary maximal trajectories on $[t_0,T^\ast)$
converge to the roots if the trajectory is global \ie $T^\ast=+\infty$, or to critical points if $T^\ast<+\infty$.
Note that, when $T^\ast<+\infty$, the argument of the critical value at the end-point must
match the iso-angle of the trajectory; also, the end-point cannot be a multiple root because~\eqref{eq:NewtonFlowLyapunov}
implies that
\[
\lim_{t\to T^\ast} P(\zeta(t)) = e^{-(T^\ast-t_0)}P(\zeta(t_0))\neq0.
\]
Backwards, maximal trajectories on $(T_\ast,t_0]$ either go to $\infty\in\CC$ if $T_\ast=-\infty$
or to non-root critical points with the same critical iso-angle if $|T_\ast|<\infty$.
Bounded maximal trajectories are possible between two critical points whose values share the same argument.
Note that $0$ is a critical value if and only if $P$ admits multiple roots. In that case, if $\zeta(t)$ converges to a root of multiplicity $\mu$,
L'Hopital's rule ensures that
\[
\zeta'(t) \underset{t\to+\infty}{\sim} - \frac{P^{(\mu-1)}(\zeta(t))}{P^{(\mu)}(\zeta(t))}
\]
so, locally, the profile of Newton's flow is radial and coincides with that of a simple root of~$P^{(\mu-1)}$.
The properties of Newton's flow where used in~\cite{AMV2023a} to produce a short proof of the fundamental Theorem of Algebra.

\bigskip
Level lines $|P(z)|=L$ are also caracterized by an ODE, namely:
\begin{equation}\label{eq:LevelLinesFlow}
\lambda'(t) = i\frac{P(\lambda(t))}{P'(\lambda(t))}\cdotp
\end{equation}
The solutions satisfy
\begin{equation}\label{eq:LevelLinesFlowSol}
P(\lambda(t)) = e^{it} P(\lambda(0)).
\end{equation}
By construction, the level-lines are orthogonal to iso-angles.
Again, stationary solutions are the roots of $P$.
The modulus $|P(\lambda(t))|$ is constant along each level-line;
in particular, the level-lines are bounded. If $|P(\lambda(t))| > \max |P(\crit(P))|$,
there is only one maximal solution, which is a Jordan curve (as the image of a circle by the Riemann map of the outside).
Below the maximal critical value, there are multiple connected components that are either compact
or join two (possibly identical) critical points.
The function $\lambda(t)$ is $2\ell\pi$-periodic where $\ell\leq d=\deg P$ because $P(\lambda(2k\pi))=P(\lambda(0))$ for any $k\in\Z$ and $P$ can
take this value at most $d$ distinct times; once the value loops, the unicity in the Cauchy-Lipschitz theorem for the first-order ODE~\eqref{eq:LevelLinesFlow}
ensures periodicity. For a level line of large enough modulus, $d$ is the smallest period.

\bigskip
\begin{thm}\label{thm:practicalFTA}
Given a monic polynomial $P$ of degree $d$, if $z_0\in\C$ satisfies
\begin{equation}
|P(z_0)| > \max |P(\crit(P))|
\qquad\text{and}\qquad
\arg P(z_0)\notin \arg [ P(\crit(P))\backslash\{0\}],
\end{equation}
then the points $z_k=\lambda(t_0+2k\pi)$ for $0\leq k< d$ obtained by solving~\eqref{eq:LevelLinesFlow} with~$\lambda(t_0)=z_0$
are distinct. Each $z_k$ is associated to a global forward Newton flow~\eqref{eq:NewtonFlow}, denoted by $\zeta_k(t)$.
The points $z_k^\ast = \lim\limits_{t\to+\infty}\zeta_k(t)$ are all the roots of $P$ and this enumeration is consistent with the multiplicities, \ie
\begin{equation}
P(z) = (z-z_0^\ast)(z- z_1^\ast)  \ldots (z-z_d^\ast).
\end{equation}
\end{thm}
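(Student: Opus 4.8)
The plan is to exploit the explicit solution formulas for the two flows. First I would use~\eqref{eq:LevelLinesFlowSol} to see that the level-line trajectory satisfies $P(\lambda(t_0+2k\pi)) = e^{2ki\pi}P(z_0) = P(z_0)$ for every $k\in\Z$, so all the points $z_k$ lie on the same level set $\{|P|=|P(z_0)|\}$ and moreover $P$ takes the \emph{same} value $P(z_0)$ at each of them. Since $|P(z_0)| > \max|P(\crit(P))|$, this level set is a single Jordan curve containing no critical point, so $P$ restricted to it is a local homeomorphism onto the circle of radius $|P(z_0)|$; combined with the discussion before Theorem~\ref{thm:practicalFTA} that $\lambda$ is $2d\pi$-periodic with $d$ the smallest period on a level line of large enough modulus, this forces the $d$ points $z_0,\dots,z_{d-1}$ to be pairwise distinct (they are exactly the $d$ preimages of $P(z_0)$ on that curve). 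The non-critical-iso-angle hypothesis $\arg P(z_0)\notin\arg[P(\crit(P))\setminus\{0\}]$ is what guarantees, as recalled in the text, that each associated forward Newton ray $\zeta_k$ is \emph{global} ($T^\ast=+\infty$): a finite-time trajectory would have to terminate at a critical point whose critical value shares the iso-angle $\arg P(z_0)$, which is excluded.

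Next I would invoke the structure of global forward Newton rays established earlier: by~\eqref{eq:NewtonFlowLyapunov}, $P(\zeta_k(t)) = e^{-(t-t_0)}P(z_0)\to 0$, and the text already states that a global forward trajectory converges to a root of $P$. Hence each $z_k^\ast = \lim_{t\to+\infty}\zeta_k(t)$ is a well-defined root of $P$. It remains to show that this list of $d$ roots is exactly $\mathcal Z$ counted with multiplicity, i.e.\ that the map $k\mapsto z_k^\ast$ realises the factorization $P(z)=\prod_{k=0}^{d-1}(z-z_k^\ast)$.

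For this surjectivity-with-multiplicity step I would argue by a degree/covering count. Consider the flow map $\Psi_T(z) = \zeta_z(T)$ sending a point $z$ on the Jordan curve $\Gamma=\{|P|=|P(z_0)|\}$ to the time-$T$ image of its Newton ray. Since $\arg P$ is constant along Newton rays while the modulus decays monotonically by a fixed factor $e^{-(T-t_0)}$, the map $\Psi_T$ carries $\Gamma$ onto the smaller level curve $\Gamma_T=\{|P|=e^{-(T-t_0)}|P(z_0)|\}$, and on both curves the restriction of $P$ is a degree-$d$ covering of a circle; the flow map commutes with $P$ up to the fixed rescaling, so $\Psi_T:\Gamma\to\Gamma_T$ is a homeomorphism preserving the cyclic order. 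Thus the $d$ images $\zeta_k(T)$ are the $d$ preimages of $e^{-(T-t_0)}P(z_0)$ under $P$, counted once each. Letting $T\to\infty$, the curve $\Gamma_T$ shrinks onto $\mathcal Z$: near a root $z_\ast$ of multiplicity $\mu$, the equation $P(z)=\epsilon$ has $\mu$ solutions clustering at $z_\ast$ as $\epsilon\to0$, and by the local analysis of Newton's flow near a multiple root (radial profile, quoted in the text) exactly $\mu$ of the rays $\zeta_k$ land at $z_\ast$. Therefore each root appears in the list $(z_k^\ast)$ with its correct multiplicity, and since there are $d$ indices and $\deg P=d$, the product formula follows.

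The main obstacle I anticipate is making the last paragraph fully rigorous — in particular justifying that the flow map $\Psi_T$ is a genuine homeomorphism $\Gamma\to\Gamma_T$ for all $T$ (no two rays merge, none escapes to a critical point en route), and controlling the limit $T\to\infty$ uniformly so that the number of rays landing at each root is exactly the local multiplicity. The cleanest route is probably to phrase it via the Riemann map conjugating Newton's flow to the radial contraction $w\mapsto e^{-t}w$ on the exterior disk, where everything is transparent, and then transport the count back; the subtlety is only at the critical values of $P$, which the hypotheses on $z_0$ were designed precisely to avoid along the relevant iso-angle.
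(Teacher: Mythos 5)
Your proof is correct in outline but takes a genuinely different route from the paper's. The paper argues by contrapositive and backward tracing: if a root $z_\ast$ were missed (or under-counted), one picks a nearby point $z$ with $\arg P(z)=\arg P(z_0)$ --- such points exist because $P(z)\sim\alpha(z-z_\ast)^m$ near a root of multiplicity $m$, so $P$ sweeps every argument $m$ times on a punctured neighbourhood --- and follows the Newton ray from $z$ \emph{backwards}. The non-critical iso-angle hypothesis makes the backward ray global, and since $|P|$ grows without bound along it, the ray must cross the level $|P|=|P(z_0)|$, at which point the iso-angle invariance identifies it as one of the $z_k$, forcing $z_k^\ast=z_\ast$. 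Running this on the $m$ local iso-angle rays near $z_\ast$ lands at $m$ distinct indices $k$ (uniqueness for the ODE forbids merging), which gives the multiplicity count. You instead argue \emph{forward}, via a covering-degree count: the flow map $\Psi_T$ carries $\{z_0,\dots,z_{d-1}\}$ onto the $d$ preimages of $e^{-(T-t_0)}P(z_0)$, which cluster onto the roots as $T\to\infty$ with the correct multiplicities by the local behaviour of $P^{-1}(\epsilon)$. Both proofs ultimately rest on the same local fact about $\arg P$ near a root, but approach it from opposite ends of the flow; the paper's backward version is shorter because it does not require tracking the evolution of a whole level curve.

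One technical overreach in your write-up is the claim that $\Psi_T:\Gamma\to\Gamma_T$ is a homeomorphism between full level curves. For generic $z\in\Gamma$ the iso-angle $\arg P(z)$ is a critical argument, so the Newton ray from $z$ terminates in finite time at a critical point and $\Psi_T(z)$ is undefined for $T$ large enough; moreover $\Gamma_T$ disconnects once $e^{-(T-t_0)}|P(z_0)|$ drops below $\max|P(\crit(P))|$. What actually holds, and suffices, is the restriction of $\Psi_T$ to the discrete set $\{z_0,\dots,z_{d-1}\}$: the hypothesis $\arg P(z_0)\notin\arg[P(\crit(P))\setminus\{0\}]$ keeps those $d$ trajectories global, and ODE uniqueness keeps them distinct, so $\{\zeta_k(T)\}_{0\le k<d}$ is exactly the $d$-element set $P^{-1}\!\left(e^{-(T-t_0)}P(z_0)\right)$ for every $T$. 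You flag this difficulty yourself in the closing paragraph; the paper's backward-tracing argument sidesteps it entirely and is the cleaner route.
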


\begin{proof}
From the discussion above, we know that the starting points $\lambda(t_0+2k\pi)$
for $0\leq k < d$ such that $\arg P(\lambda(t_0))\notin \arg [ P(\crit(P))\backslash\{0\}]$
have a global forward Newton flow and that each of these flows converges to a root of $P$.
The $d$ starting points are distinct because $|P(\lambda(t_0))| > \max |P(\crit(P))|$ so the level line has
only one connected component.
The question is wether one could be missing a root.
Near each root $z_\ast$, the polynomial~$P$ takes  all possible arguments exactly as many times as the multiplicity because $P(z)\sim \alpha (z-z_\ast)^m$
as $z\to z_\ast$, for $\alpha\in\C^\ast$ and $m\in\N^\ast$. 
If we missed a root $z_\ast$, we consider a nearby point where $\arg P(z) = \arg P(\lambda(t_0))$. The Newton ray from $z$ is
global both forwards and backwards and escapes (backwards) at infinity. Thus, this Newton ray intersects the level line 
of modulus $|P(\lambda(t_0))|$ and, as the angle is invariant, it is one of the $\lambda(t_0+2k\pi)$ points. Therefore, no root could have been missed.
The same argument shows that no multiplicity can be misrepresented, by following back the $m$ Newton rays near $z_\ast$ whose
angle is $\arg P(\lambda(t_0))$.
\end{proof}


If one chooses $d$ distinct values $(t_j)_{0\leq j < d}$ in $[0,2\pi)$, then at least one of them must satisfy $\arg P(\lambda(t_j))\notin \arg [ P(\crit(P))\backslash\{0\}]$ because $\arg P(\lambda(t_j)) =  \arg P(\lambda(t_0)) + t_0-t_j \mod 2\pi$ and there are, at most, $d-1$ distinct critical values.
We have therefore a naive way of choosing $O(d^2)$ starting points $(\lambda(t_j+2k\pi))_{0\leq j,k< d}$ whose Newton trajectories are guarantied to reach all the roots.
Generically however, the number of starting point drops to $O(d)$ as it is very unlikely that more than a few choices will hit critical arguments.
For example, in practice, we were able to split the polynomials $p_n$ and $q_{\ell,n}$ by Newton's method
using only $4d$ starting points of iso-angles $0$, $\pi/2$, $\pi$ and $3\pi/2$.

\bigskip
As illustrated on Figure~\ref{fig:idea},
the next advantage of using iso-angle rays and level lines instead of using unstructured parallel iterations of the Newton map, is that one can
perform most of the descend using a lean mesh, and densify it at a lowest level with no risk of missing iso-angle rays.

\medskip
\begin{figure}[H]
\captionsetup{width=.95\linewidth}
\begin{center}
\includegraphics[width=\textwidth]{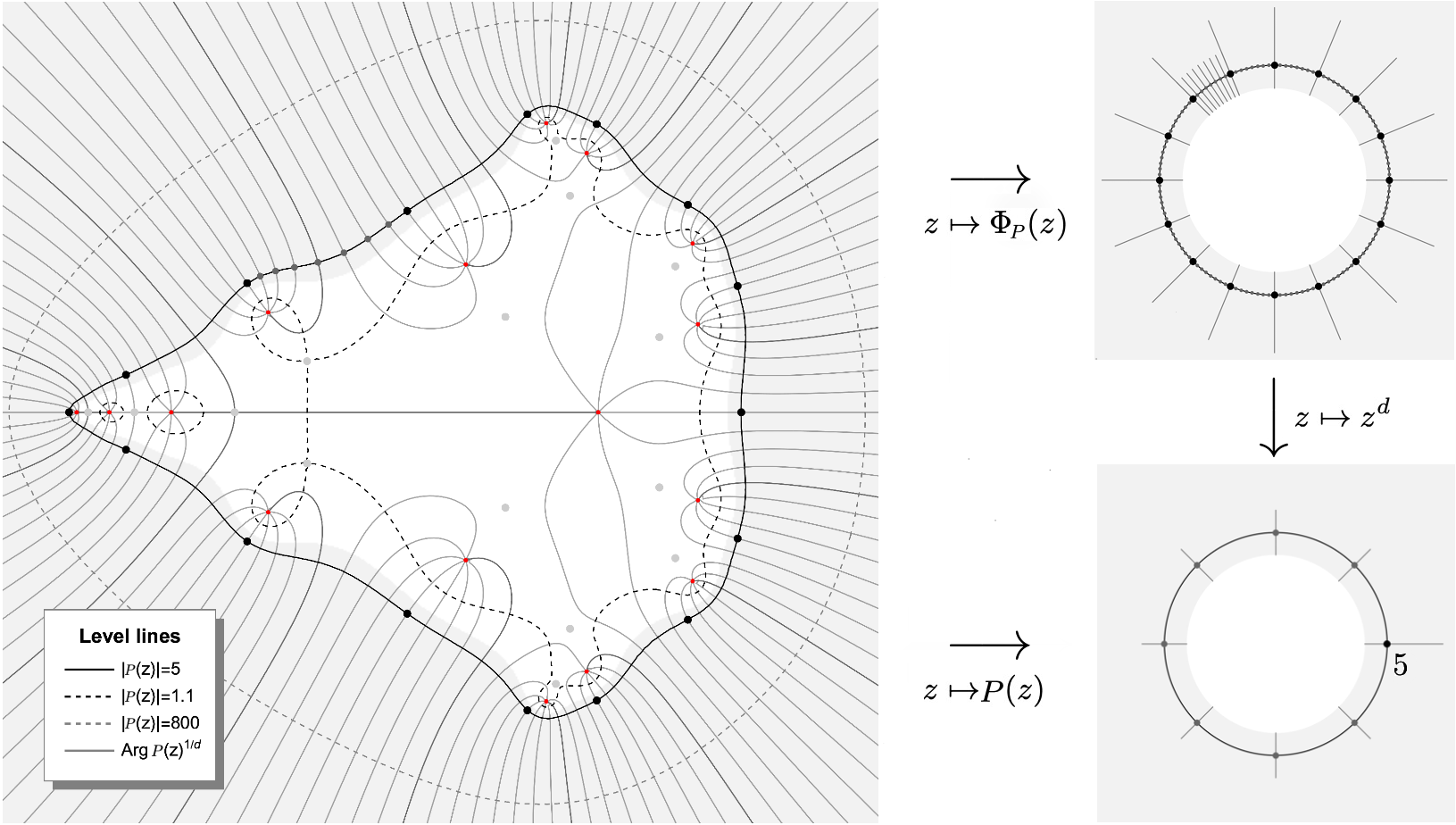}
\caption{\label{fig:MVSplitAlgo}\small
Initial mesh $\lev_{\zeta_0,d}(p_5)$ as black points and part of $\lev_{\zeta_0,8d}(p_5)$ with $d=16$ and $\zeta_0=5$.
Two other level lines are depicted along with Newton's flow (iso-angles).
The map $\Phi_P(z)=P(z)^{1/d}$ maps large level lines to circles (top right)
while $P$ folds the level line $\zeta_0$ onto itself  $d$ times (bottom right). This figure is a commutative diagram.
}
\end{center}
\end{figure}

For $\zeta_0\in\C$ such that $|\zeta_0|> \max |P(\crit(P))|$ and $N\in\N^\ast$, let us consider the mesh
\begin{equation}\label{eq:defDiscreteLev}
\lev_{\zeta_0,N}(P) = \left\{ \lambda(2k d\pi/N) \,;\, 0\leq k < N\right\}
\end{equation}
where $\lambda$ is the  $2d\pi$-periodic maximal solution of~\eqref{eq:LevelLinesFlow} defined by a choice\footnote{To lift the ambiguity regarding that choice, one can denote by $\lev^\bullet_{z_0,N}(P)$ the discrete level
that \textit{starts} at the point $\lambda(0)=z_0\in\C$; then $\lev^\bullet_{z_0,N}(P)=\lev_{\zeta_0,N}(P)$ with
the value $\zeta_0=P(z_0)$.}
of $\lambda(0)$ as one arbitrary root
of~$P(\lambda(0))=\zeta_0$. In the rest of this section, we assume that this choice is made consistently;
it does not interfere with the set itself because $\lambda$ is periodic.
The formula~\eqref{eq:LevelLinesFlowSol} implies that:
\begin{equation}\label{eq:LevelLinesFlowPhaseShift}
P(\lambda(2k d\pi/N)) = e^{2 i k d\pi/N} \zeta_0.
\end{equation}
In particular, the mesh $\lev_{\zeta_0,N}$ is a finite subset of cardinality $N$ of the (smooth)
level curve defined by $|P(z)|=|\zeta_0|$.
It is called a \textit{discrete level set} of order $N$.
As $\lev_{\zeta_0,N}(P) \subset \lev_{\zeta_0,2N}(P)$, each mesh $\lev_{\zeta_0,2^m}(P)$
is a refinement of $\lev_{\zeta_0,2^{m-1}}(P)$. 
More generally, increasing the value of $N$ creates a finer mesh. 
On the value side, $P(\lev_{\zeta_0,N}(P))$ is a uniform mesh (roots of unity of higher order);
on the pre-image side, the density of $\lev_{\zeta_0,N}(P)$
the mesh increases naturally around points of higher root density, as illustrated on Figure~\ref{fig:MVSplitAlgo}.
Note that the previous considerations on the solutions of~\eqref{eq:LevelLinesFlow} imply that
\[
P^{-1}(\zeta_0) = \lev_{\zeta_0,\deg P}(P)
\]
as long as $|\zeta_0|> \max |P(\crit(P))|$.

\medskip
The discrete level line $\lev_{\zeta_0,N}(P)$ is computed recursively using the following result.
One defines the Newton map with target $\zeta$ as the limit, if it exists:
\begin{equation}
L(P ; z_0,\zeta) = \lim_{n\to\infty} N_{P-\zeta}^n (z_0)
\end{equation}
where $N_Q$ is the Newton map~\eqref{eq:Newton} of a polynomial $Q$ and powers denote, as usual, composition.
By construction, $P(L(P ; z_0,\zeta)) = \zeta$.
\begin{prop}\label{prop:computeLevelLine}
Given $z_0$ such that $|P(z_0)| > \max |P(\crit(P))|$ and
an arbitrary integer~$N\in\N^\ast$, there exists $M\in \N^\ast$ large enough given by~\eqref{eq:nextStepV3}
such that the finite sequence
\begin{equation}\label{eq:auxSequence}
z_{k+1} = L\left(P;z_k, P(z_k) e^{\textstyle \frac{2 i \pi d}{MN}}\right)
\end{equation}
satisfies $\lev_{P(z_0),N}=\{z_0,z_M,z_{2M},\ldots,z_{(N-1)M}\}$.
\end{prop}
\begin{remark}
The auxiliary finite sequence $(z_{\ell M+j})_{\ell\in\ii{0}{N-1}; \,j\in\ii{0}{M-1}}$ satisfies
\[
P(z_{\ell M+j}) = P(z_0) e^{2i\pi d \,(\ell M+j)/(MN)}.
\]
When $N=d=\deg P$, the phase shift is simpler. 
The phase of $P(z)$ changes $d$ times along $|P(z)|=\lambda$.
Each time the phase advances by $2\pi$ (one turn), we have a new point of $\lev_{\zeta_0,d}(P)$.
Each discrete turn is completed with $M$ intermediary steps,
as illustrated on Figure~\ref{fig:MVSplitAlgo}. 
The value $M$ is called the \textsl{number of points per turn} for the computation of $\lev_{\zeta_0,d}(P)$.
When $N$ divides $d$, roots of $P(z)=\zeta_0$ are being skipped.
\end{remark}
\proof 
Let $C \defequal \max |P(\crit(P))|$.
Applying the same argument as in \cite{JUNG1985}, one can claim the following:
there is a unique analytic determination of $\Phi_P(z)=P(z)^{1/d}$
that satisfies the constraint $\Phi_P(z)\sim z$ as $|z|\to\infty$.
Moreover, on $\{z\in\CC \,;\, |P(z)|>C\}$, the map $\Phi_P$ is univalent (see Fig.~\ref{fig:MVSplitAlgo}).
Given $\zeta_0=P(z_0)\in\C$ such that $|\zeta_0|> C$ and $\lambda$ the continuous level line from $z_0$,
the first step of the proof consists in choosing $M\in\N^\ast$ large enough to ensure that $z_1 = \lambda(\frac{2\pi d}{MN})$
satisfies
\begin{equation}\label{eq:nextStepV1}
\lim_{n\to\infty} \mathcal{N}^n(z_0)=z_1 \quad\text{where}\quad   \mathcal{N}(z) = z - \frac{P(z)-P(z_1)}{P'(z)}\cdotp
\end{equation}
Up to a translation $\mathcal{N}(z)=\mathcal{N}_\ast(z-z_1)+z_1$ the desired statement is equivalent to:
\[
\lim_{n\to\infty} \mathcal{N}_\ast^n(z_0-z_1)=0 \quad\text{where}\quad   \mathcal{N}_\ast(z) = z - \frac{P(z+z_1)-P(z_1)}{P'(z+z_1)}\cdotp
\]
Let $\zeta_1=P(z_1)$ and the half-line $\mathcal{D}=\R_-\zeta_1$
antipodal to $\zeta_1$. Note that $|\zeta_0|=|\zeta_1|$.
The injectivity of $\Phi_P$ ensures that $P^{-1}(z)$ is well defined, injective and bi-holomorphic
on $\disk(0,C)^c\backslash\mathcal{D}$ and in particular on $\disk(\zeta_1,|\zeta_0|-C)$.
Next, writing $R=|\zeta_0|-C$, we apply Theorem~\ref{thm:target} to the injective map $g:\disk(0,1)\to\C$ defined by
\[
\forall w\in \disk(0,1),\qquad
g(w) = \frac{P'(z_1)}{R} \left( P^{-1}(\zeta_1 + R w) - z_1\right).
\]
By construction, $g(0)=0$ and $g'(0)=1$. Its reciprocal  $f=g^{-1}$ is given by
\[
f(z)=\frac{P\left(z_1+ \textstyle\frac{R z}{P'(z_1)}\right)-P(z_1)}{R}\cdotp
\]
Then $g(\disk(0, \beta_0))$ is contained in the bassin of attraction of $0$ for the Newton map
\[
N_f(z)= \frac{P'(z_1)}{R} \: \mathcal{N}_\ast\left(  \frac{Rz}{P'(z_1)} \right).
\]
The original question~\eqref{eq:nextStepV1} gets an positive answer if $\frac{P'(z_1)}{R}(z_0-z_1)\in g(\disk(0, \beta_0))$, which easily boils down to
\begin{equation}\label{eq:nextStepV2}
|P(z_0)-P(z_1)|<\beta_0(|P(z_0)|-C).
\end{equation}
As $P(z_1)=P(z_0)e^{\frac{2i\pi d}{MN}}$ and $|1-e^{i\theta}|<|\theta|$ it is therefore sufficient to take
\begin{equation}\label{eq:nextStepV3}
M \geq \frac{2\pi d}{N\beta_0 \left(1-\frac{C}{|P(z_0)|}\right)}
\end{equation}
to ensure that $L\left(P;z_0, P(z_0) e^{\textstyle \frac{2 i \pi d}{MN}}\right) = \lambda(\frac{2\pi d}{MN})$.
As the constants are uniform by rotation,
the rest of the statement follows by re-indexing the next points.
\endproof

\begin{remark}
For more precise estimates along the level line, see \cite{MVCoeffs},
where we exploit the fact that $P^{-1}$ is injective not only on the disk $\disk(\zeta_1,|\zeta_0|-C)$
but also on the half-plane tangent to $\disk(0,C)$ that contains $\zeta_1$.
\end{remark}


\bigskip
Once one is able to compute $\lev_{\zeta_0,N}(P)$ efficiently,
then, the splitting algorithm consists in iterating either Newton's flow or the
Newton map from each point of $\lev_{\zeta_0,N}(P)$
until roots are found up to the precision desired. Obviously, this step is highly parallelizable.
Duplicates are eliminated from the list and if some roots are missing, $N$ can be increased.

\medskip
The strategy for splitting $P$ can therefore be formalized as follows.

\medskip
\renewcommand{\algorithmcfname}{Algorithm $\mathcal{S}(P)$~}
\renewcommand{\thealgocf}{}
\begin{algorithm}[H]
\caption{Splitting algorithm for $P(z)$ with simple roots in $\disk(0,1)$}
\label{algo:split}
\DontPrintSemicolon
\SetKwFor{ForEach}{for~each}{do}{endfch}
\SetKwRepeat{Do}{do}{while}
\smallskip
\KwData{$\lambda_{+} \geq \max |P(\crit(P))|$ and $\lambda_{-} \leq \min |P(\crit(P))|$.}
\cod{Compute the discrete level line $\mesh=\lev_{2\lambda_{+}, c_L d}(P)$ \tcc*{$O(d)$}}
\ForEach{$z\in \mesh$}{
  	\cod{ Iterate time-1-flow to $\lambda_-/11$ \tcc*{$(5+\log \frac{\lambda_{+}}{\lambda_{-}}) \times T_\text{flow}$}}
	\cod{ Iterate $N_P$ to the desired precision $\varepsilon$ \tcc*{$O(\log|\log\varepsilon|)$}}
	\cod{ Add root to appropriate data structure $R$ \tcc*{$O(\log^2 d)$}}
}
\end{algorithm}

\bigskip
An upper bound for~$\lambda_+$ is given by Theorem~\ref{thm:rootLoc} applied to $P'$; on the other hand, 
no simple universal a-priori lower bound for $\lambda_-$ is known.

\medskip
We have already explained how to compute the discrete level line.
The constant $c_L$ is chosen in accordance with Proposition~\ref{prop:computeLevelLine}. In practice, $c_L=4$ (see~\S\ref{par:levAlgoMandelSpecifics})

\medskip
In the previous algorithm, Newton's descend is split in two phases.
In the first phase, one  follows the iso-angle rays instead of taking discrete Newton steps.
We call $T_\text{flow}$ the arithmetic cost of following the continuous flow in time 1 and factor it out.
An exact upper-bound of the duration of transit is available for the continuous flow.

\begin{prop}\label{prop:computeLevelLineBis}
The number of iterations of the time~1 Newton flow that connect two level lines is
logarithmic in the ratio of the levels.
Precisely, given a point $z_0$ such that $|P(z_0)|=\lambda_1$ and $\lambda_2<\lambda_1$,
the Newton flow $\zeta(t)$ of $P$ defined by~\eqref{eq:NewtonFlow} with $\zeta(0)=z_0$ satisfies
\[
|P(\zeta(t))|\leq\lambda_2 \qquad\text{if and only if}\qquad t\geq \log \frac{\lambda_1}{\lambda_2}\cdotp
\]
\end{prop}
\proof According to~\eqref{eq:NewtonFlowLyapunov}, each time~1 iteration of Newton's flow divides the modulus
of the value of the polynomial by~$e$.
\endproof

In the second phase, our choice for $\lambda_-$ ensures that $N_P$ converges quadratically.
\begin{prop}\label{prop:quadraticBassinGuarantied}
Assuming $P$ has simple roots, let $r_c=\min |P(\crit(P))|$. The set
\[
\Omega \defequal P^{-1}\left( \disk(0,r_c) \right)
\]
is contained in the bassin of attraction of the roots of Newton's flow~\eqref{eq:NewtonFlow}.

Moreover, the set $P^{-1}(\disk(0,r_c/11))$ in included
in the union of the quadratic bassins of the Newton's method.
If the roots or $P$ belong to $\disk(0,1)$, it takes $O(\log|\log\varepsilon|)$ Newton steps to go 
from any level line $\lambda \le r_c/11$ to the roots, up to the desired precision $\varepsilon$.
\end{prop}
\proof
The set $\Omega$ has $d$ connected components, one for each root of $P$.
Indeed, by construction, $\Omega$ does not contain any critical point so the
forwards Newton's flow~\eqref{eq:NewtonFlow} is global on $\Omega$ and connect continuously any point in $\Omega$ to a root of $P$,
without leaving $\Omega$ because of~\eqref{eq:NewtonFlowLyapunov}. Every component contains at least one root and
if two roots where in the same component, the flow would have to split and $\Omega$ would contain a critical point, which is impossible.
Let us now take $\lambda\leq \lambda_- /11 < \beta_2 r_c$ and $w_0$ such that $|P(w_0)|=\lambda$.
The function $P$ maps each connected component of $\Omega$ to $\disk(0,r_c)$ bi-holomorphically and, in particular,
is injective on each. Choosing a component $\Omega_j$ of $\Omega$ and the corresponding root $z_j$,
we apply Corollary~\ref{cor:targetQuad} to
\[
g(y)= \frac{P'(z_j)}{r_c} \left[ P^{-1}(r_c y) - z_j \right] : \disk(0,1) \to
\tilde{\Omega}_j \defequal \textstyle \frac{P'(z_j)}{r_c}\left( \Omega_j -z_j\right)
\]
with target $\disk(0,\epsilon_j)$, where $\epsilon_j=\frac{|P'(z_j)|}{r_c} \,\varepsilon$.
If $|P'(z_j)|/r_c>\frac{1}{8\varepsilon}$, we already have $w_0\in\disk(0,\epsilon_j)$.
Otherwise, the Newton sequences
\[
w_{k+1} = N_P(w_k)\qquad\text{and}\qquad \zeta_{k+1}=N_{g^{-1}}(\zeta_k)  
\]
are related by
\[
w_k = z_j + \frac{r_c \zeta_k}{|P'(z_j)|}
\]
and for $n\geq3+\log_2\left|\log_2 \frac{|P'(z_j)|}{r_c}\,\varepsilon\right|$, one has 
$\zeta_k\in\disk(0,\epsilon_j)$, \ie $w_k\in\disk(z_j,\varepsilon)$.

\medskip
If one assumes that the roots of $P$ belong to the unit disk, then K\"obe's lemma implies that
$\disk(z_j,\frac{1}{4}r_c/|P'(z_j)|) \subset \Omega_j$. Moreover, $\Omega_j\subset \disk(z_j,\delta_j)$
where $\delta_j$ is the distance to the nearest critical point, which cannot exceed $2$ because $\operatorname{Crit} P$
belongs to the convex envelope of the roots and therefore to $\disk(0,1)$. Consequently, $|P'(z_j)|/r_c > 1/8$.
\endproof

\begin{remark}
Proposition~\ref{prop:quadraticBassinGuarantied}
is optimal for the Newton flow: $\overline{\Omega}$ contains at least one critical point of $P$ so
it is not contained in the bassin of attraction of the roots.
\end{remark}

The appropriate data structure that allows sorting and insertions of $d$ points with total complexity of
at most~$O(d \log^2 d)$ will be presented in Section~\ref{par:num_results}.

\bigskip

The previous statements about the algorithm can be reformulated as follows.
\begin{thm}\label{thm:levLineFlow}
There are universal constants $\rho_2$ and $\beta_2$ given by Corollary~\ref{cor:targetQuad}, such that, for any polynomial
$P$ with simple roots $(z_j)$:
\begin{itemize}
\item The Newton map converges bi-exponentially to $z_i$ on the
disk $\disk(z_j,\rho_2 r_c/|P'(z_j)|)$.
\item Newton's flow~\eqref{eq:NewtonFlow} in time $T=\log \frac{1.1}{\beta_2}+\log\frac{\max |P(\crit(P))|}{\min |P(\crit(P))|}$
maps the level line
\[
\mathcal{L}=\{z\,;\, |P(z)|=1.1\max |P(\crit(P))|\}
\] into the union of the quadratic bassins of attraction
of the roots for the Newton map~$N_P$, \ie the zone where the convergence of the iterates of  $N_P$ towards the roots is bi-exponential.
Note that $\log \frac{1.1}{\beta_2}\leq 2.5$.
\end{itemize}
\end{thm}

\subsection{Specifics for polynomials related to $\M$}\label{par:levAlgoMandelSpecifics}

In full generality, $\mathcal{S}(P)$ and Theorem~\ref{thm:levLineFlow} are not yet ready to rival the algorithm of \cite{HSS2001}. However,
in practice, our strategy behaves extremely well (see Section~\ref{par:num_results}).

\medskip
In our implementation, the outmost level line is $\lambda_0=5$ for $p_n$ and $\lambda_0=100$ for $q_{\ell,n}$.
For the smallest degrees, there is no need for parallelism and it is not necessary
to refine the level curve: one applies simply $\mathcal{S}(P)$ with $4d$ points, \ie $c_L=4$.

\bigskip
To split $p_n$  and $q_{\ell,n}$ for large $n$, a massively parallel implementation is welcome and
can be formalized as follows. Upper bounds on the arithmetic complexity
are given as commentaries. The mention \textit{para} indicates that the operation can be performed in a trivially parallel  fashion (\ie as $J$ independent jobs)
on multiple computing units ; on the other hand, the mention \textit{mono} is a step that should be performed on a single computing unit.

\medskip
One chooses a constant $I_N$ that will cap the number of Newton iterates to $I_N\log d$. In practice, $I_N=1$
for $p_n$ or $I_N=2.6$ for $q_{\ell,n}$.

\medskip
\renewcommand{\algorithmcfname}{Algorithm $\mathcal{S}_{\newparallel}$~}
\begin{algorithm}[H]
\caption{Splitting algorithm for $p_n$ and $q_{\ell,n}$ (parallel \& certified version)}
\label{algo:splitParallel}
\DontPrintSemicolon
\SetKwFor{ForEach}{for~each}{do}{endfch}
\SetKwRepeat{Do}{do}{while}
\smallskip
\Begin(\textbf{compute level lines}){
  \cod{coarse level line $\lev_{\lambda_0,N_0}(P)$ \tcc*{$O(1)$ mono}}
  \cod{refine level line to $\lev_{\lambda_0,d c_L}(P) = \bigcup \mesh_j$ \tcc*{$O(d)$ para}}
} 
\Begin(\textbf{root finding}){
 \ForEach{\textbf{parallel job} $j\in\{1,\ldots,J\}$}{
  \ForEach{$z\in \mesh_j$}{
  	\cod{ iterate $N_P$ up to $\max\{20; I_N \log d\}$ \tcc*{$O(\log d)$ para}}
 } 
 \cod{sort roots $R_j$ in appropriate data structure \tcc*{$O(1)$ para}}
} 
 \cod{merge roots from all jobs $R=\bigcup R_j$ \tcc*{$O(d/J)$ mono}}
} 
 \Begin(\textbf{certification}){
 \ForEach{$z\in R$}{
 \cod{check $P(z)=0$ using disk arithmetic \tcc*{$O(\log d)$ para}}
 }
 } 
\end{algorithm}

\bigskip
Let us go over each step in details and justify the bounds on the arithmetic complexity
claimed above.

\bigskip
The mesh refinement is possible because the $p_n$ and $q_{\ell,n}$ are defined recursively (see below).
Each point $\lev_{\lambda_0,N_0}(P)$ can then be used as a starting point of an independent job so one can choose $J\leq  N_0$.
By construction, there are $d c_L/N_0-1$ points of $\lev_{\lambda_0,d c_L}(P)$ between each pair of consecutive points
of $\lev_{\lambda_0,N_0}(P)$. According to Theorem~\ref{thm:practicalFTA}, it means that the algorithm will produce
$| R_j | \sim d/J$ roots per job.

For large degrees (typically $n\geq 17$), the coarse level set corresponds to $N_0 = 2^{15}$ which means that 16\,385 starting points such that $|P(z)|=\lambda_0$ are computed in
the upper plane $\Im z \geq 0$, with $\arg P(z) = 2k\pi/N_0$ and $k\in\ii{0}{N_0}$.
For the intermediary periods, we found it more practical to group
consecutive jobs together in order to ensure an approximately constant size for $| R_j |$ (and thus consistent file sizes across the project).

\medskip
Let us now explain how $\lev_{\lambda_0,c_L d}(P)$ can be computed efficiently using a parallel algorithm.
As the cardinality of this mesh is $c_L d$, the huge volume of data prohibits to compute the mesh
ahead of time or to store it on disk when $d$ reaches the tera-scale. The pertinent subsection of
the mesh has to be generated on the fly by each computing process.

\medskip
For the Mandelbrot set, one uses the fact that the polynomials $(p_k)_{k\geq3}$  form a nested family whose
level lines encircle~$\M$ tightly (see Fig.~\ref{fig:meshRefine}).
One computes an initial mesh $\lev_{\lambda_0,2^{15}}(p_{16})$.
Next, one observes that for any $z_j \in \lev_{\lambda_0,2^{15}}(p_{16}) = p_{16}^{-1}(\lambda_0)$, one has
\[
p_{17}(z_j)=p_{16}(z_{j})^2+z_{j} =  \lambda_0^2+z_{j}.
\]
Proceeding as in Proposition~\ref{prop:computeLevelLine},
we can find $z_j'$ close to $z_j$ such that $p_{17}(z_j')=\lambda_0^2$.
This means that $z_j'\in \lev_{\lambda_0^2, 2^{15}}(p_{16})$ with the correct index.
Iterating this method, one can then use $z_j'$ as starting points to solve iteratively $p_{17}(z)=r_k$ where
the radius $r_k$ is gradually reduced from $\lambda_0^2$ down to $\lambda_0 $.
In the end, one obtains $\lev_{\lambda_0, 2^{15}}(p_{17})$.
In a similar fashion, one can construct the mesh $\lev_{\lambda_0, 2^{15}}(p_{k+1})$ from $\lev_{\lambda_0, 2^{15}}(p_{k})$
in negligible time. This method is the key to the parallelization as each point of $\lev_{\lambda_0, 2^{15}}(p_{k})$ is the seed
of a parallel task.

\begin{figure}[H]
\captionsetup{width=.85\linewidth}
\begin{center}
\includegraphics[width=.9\textwidth]{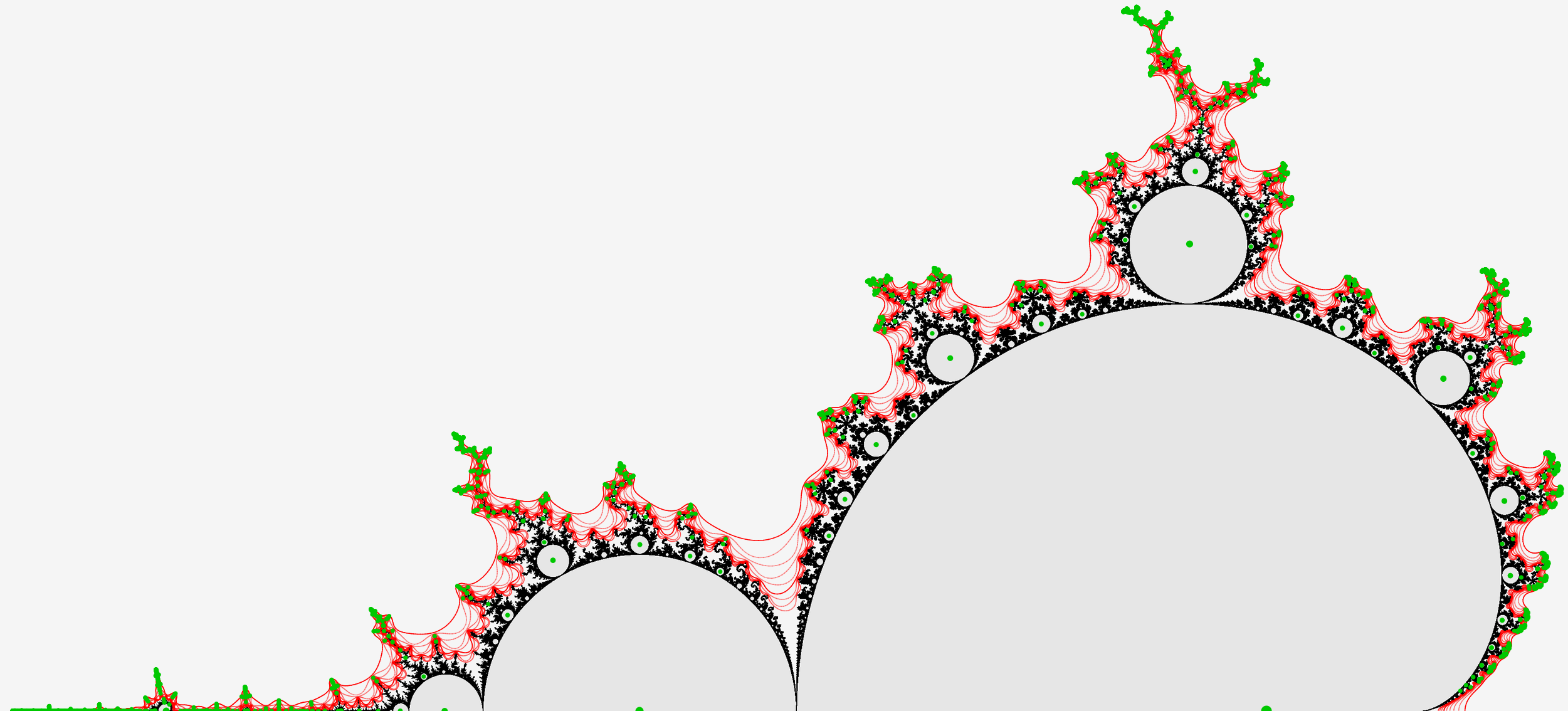}
\caption{\label{fig:meshRefine}\small
Level sets of $p_k$ for $10\leq k\leq 18$ (red) and hyperbolic centers $\hyp(n)$ for $n\leq 12$ (green).
The level lines $\lev_{\lambda,N}$ tend to self-refine around regions where $\arg P(z)$ cycles most,
which reflects a higher intrication of the bassins of attraction for the Newton flow and the Newton map.
}
\end{center}
\end{figure}

In practice, the highest critical value (in module) of $p_n$ is reached along the real axis,
at the left-most tip of the Mandelbrot set and is asymptotically equal to $2$ (see also~\cite{JUNG1985}).

\bigskip
The number of Newton steps necessary to jump  from the starting level line $\lambda_0>2$
to a neighbourhood $\disk(z,\varepsilon)$ of a root $z$ is almost constant: on average,
no more then 12 steps where necessary to split $p_{41}$, most of which can even be performed in a lower precision
(see Table~\ref{table:newtonSteps} in Section~\ref{par:num_results}).
 One can show that the 1st Newton step is of order the distance to the Mandelbrot set. What happens later is less clear.

\subsection{About multiple roots}

As the level line is chosen above the highest modulus of critical values, the algorithm is not affected by multiple roots. A root of multiplicity $m$ will be reached along $m$ distinct iso-angles.
In practice, our benchmarks (see  Section~\ref{par:BenchNewtonSteps})
confirm that our algorithm is indeed robust to high multiplicities.


\section{The certification process}
\label{par:certif}

As illustrated in \cite{RSS2017}, there are various techniques to convince oneself that one has found all roots.
For example, one can use Viete's identities to check that a few sums of the powers of the roots
are in concordance with the first coefficients of the polynomial.
Occasionally, this technique can provide some help in finding a few missing roots.
However, for very large degrees, the cost of this checkup may become prohibitive for a result that, in the worst case,
could be no more than a lucky coincidence.

\medskip
Each of our results comes instead with a numerical proof, based on disk arithmetic.
The complexity of checking that $p_n(z)=0$ using our method is $O(\log d)$.

\subsection{How to prove the localization of a root}

Let us consider $f \in \Hol(U)$ and assume that one has found a point~$z_0\in U$ such that $f(z_0) \in \disk(0,\varepsilon)$
for some $\varepsilon>0$ and $f'(z_0) \in \disk(\lambda,\eta)$ with~$|\lambda|>\eta>0$.
If $\varepsilon/(|\lambda|-\eta)$ is small enough, one can expect
the existence of an exact root of $f$ in the immediate vicinity of~$z_0$.
This idea is at the heart of Newton's method. We can combine it with the spirit of Rouch\'{e}'s theorem~\ref{thm:rouche} to produce a quantifiable statement.

\begin{thm}\label{thm:proofRoots}
Given a holomorphic function $f\in\Hol(U)$, a disk $B=\disk(z_0,R)$ such that $\bar{B}\subset U$
and $B'$ a second disk  such that $f'(B)\subset B'$, we assume that
\begin{equation}\label{locRootAssumpt}
R \dist(0,B') > |f(z_0)|.
\end{equation}
Then there exists a unique point $z_\ast \in B$ such that $f(z_\ast)=0$.
\end{thm}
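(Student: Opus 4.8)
The plan is to apply Rouché's theorem (Theorem~\ref{thm:rouche}) to the pair of functions $f$ and $g(z) = f(z_0) + f'(z_0)(z-z_0)$ on the disk $B = \disk(z_0,R)$. First I would verify that the affine function $g$ has exactly one zero in $B$: since $f'(z_0) \in B'$ and $\dist(0,B') > 0$ by hypothesis, we have $f'(z_0) \neq 0$, so the unique zero of $g$ is $z_1 = z_0 - f(z_0)/f'(z_0)$, and $|z_1 - z_0| = |f(z_0)|/|f'(z_0)| \leq |f(z_0)|/\dist(0,B') < R$ by~\eqref{locRootAssumpt}, so $z_1 \in B$. Hence $g$ contributes exactly one zero inside $B$ and none on $\partial B$.

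Next I would estimate $|f(z) - g(z)|$ on the boundary $\partial B$ and compare it with $|g(z)|$ there. The key point is that $g$ is the first-order Taylor polynomial of $f$ at $z_0$, so for $z \in \partial B$, writing $f(z) - g(z) = \int_0^1 (1-t)\, \frac{d}{dt}\big[f'(z_0 + t(z-z_0))\big]\,(z-z_0)\,dt$ — or more cleanly, using that $f'(B) \subset B'$ so that $f'(w) - f'(z_0) \in B' - B'$ has modulus at most $2\,\mathrm{rad}(B')$ for all $w \in B$ — one gets $|f(z) - g(z)| \leq R \cdot \sup_{w\in B}|f'(w) - f'(z_0)|$. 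Meanwhile, for $z \in \partial B$ one has $|g(z)| \geq R\,|f'(z_0)| - |f(z_0)| \geq R\,\dist(0,B') - |f(z_0)| > 0$. The hard part will be extracting the strict inequality $|f(z)-g(z)| < |g(z)|$ on $\partial B$ purely from~\eqref{locRootAssumpt}: the hypothesis $R\,\dist(0,B') > |f(z_0)|$ must be strong enough to absorb both the lower bound defect $|f(z_0)|$ on $|g|$ and the Taylor remainder bound on $|f-g|$; this presumably uses that $f'(B)\subset B'$ forces $\dist(0,B')$ to be small relative to $|f'(z_0)|$ and $\mathrm{rad}(B')$ to be correspondingly small, so that the remainder term is controlled by the same quantity $\dist(0,B')$ that drives the main estimate.

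Once the Rouché inequality $|f(z) - g(z)| < |g(z)|$ holds on $\partial B$, Theorem~\ref{thm:rouche} gives that $f$ and $g$ have the same number of zeros (counted with multiplicity) inside $B$, namely exactly one. Calling this zero $z_\ast$, we conclude that $z_\ast \in B$ is the unique point with $f(z_\ast) = 0$, which is the claim. I would close by noting that the argument simultaneously shows $|z_\ast - z_0| < R$ and, if one tracks constants, gives the quantitative bound $|z_\ast - z_0| \lesssim |f(z_0)|/\dist(0,B')$ that makes this usable as the certification step in Algorithm~$\mathcal{S}_{\newparallel}$.
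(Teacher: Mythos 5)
Your strategy — Rouch\'e against an affine comparison map, then conclude a unique simple zero in $B$ — is sound and is essentially dual to the paper's argument, which follows $f\circ\gamma$ directly and computes its winding number about $0$ via the argument principle. But your specific choice $g(z)=f(z_0)+f'(z_0)(z-z_0)$ does not close, and the hand-wave at the end (``presumably \dots'') is where it breaks. Write $B'=\disk(c,\rho)$. With your $g$, the Taylor-remainder bound is $|f(z)-g(z)|\leq R\sup_{w\in B}|f'(w)-f'(z_0)|<2R\rho$, while on $\partial B$ you only get $|g(z)|\geq R|f'(z_0)|-|f(z_0)|$. Since $f'(z_0)$ may lie on the inner edge of $B'$, the only lower bound on $|f'(z_0)|$ that the hypotheses supply is $\dist(0,B')=|c|-\rho$, and the hypothesis only bounds $|f(z_0)|$ by $R(|c|-\rho)$. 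So $|g|$ on $\partial B$ can be arbitrarily close to $0$ while $2R\rho$ stays bounded away from $0$; the Rouch\'e inequality genuinely fails for this $g$. There is no hidden smallness of $\rho$ relative to $\dist(0,B')$ — nothing in the statement forces $B'$ to be thin.

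The fix is to use the center of $B'$, not $f'(z_0)$, as the slope: take $g(z)=f(z_0)+c(z-z_0)$. Then for $z\in\partial B$, the fundamental theorem of calculus gives
\[
|f(z)-g(z)| = \Big|\int_0^1\big(f'(z_0+t(z-z_0))-c\big)(z-z_0)\,dt\Big|\leq R\rho,
\]
since every value of $f'$ on $B$ lies in $\disk(c,\rho)$, with no doubling. Meanwhile $|g(z)|\geq R|c|-|f(z_0)| > R|c|-R(|c|-\rho)=R\rho$, exactly from~\eqref{locRootAssumpt}. Rouch\'e then applies, $g$ has its unique simple zero $z_0-f(z_0)/c$ inside $B$ (again by the hypothesis), and $f$ has exactly one zero (counted with multiplicity, hence simple) in $B$. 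The paper's proof is the same computation carried out on $f\circ\gamma$ directly: the inclusion $f(\gamma(t))\in f(z_0)+Re^{2i\pi t}B'$ places the image curve in the annulus of inner radius $R(|c|-\rho)$ about $f(z_0)$, which encircles $0$ precisely because $|f(z_0)|<R\dist(0,B')$; the winding number is then read off by homotopy to $t\mapsto f(z_0)+Rce^{2i\pi t}$.
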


\begin{remark}
Note that, $z_\ast$ is necessarily a simple root of $f$.
In practice, disk arithmetic (see \S\ref{par:controlNum}) provides an upper bound of $|f(z_0)|$
and, for a given $R>0$, also  of $|f'(z)-z_1|$ when $z\in B$, where $z_1$ is a numerical
approximation of $f'(z_0)$. One can then construct $B'$ and check if~\eqref{locRootAssumpt}
is indeed satisfied.
\end{remark}

\proof
Let us introduce the arc $\gamma(t) = z_0 + R e^{2i\pi t}$ for $t\in [0,1]$.
The fundamental theorem of calculus and the convexity of $B'$ imply:
\[
f(\gamma(t)) = f(z_0) + R e^{2i\pi t} \int_0^1 f'(z_0+s R e^{2i\pi t}) ds \in f(z_0) + R e^{2i\pi t} \cdot B'.
\]
Assumption \eqref{locRootAssumpt} thus implies that $f\circ \gamma$ is valued in an annulus centered at $f(z_0)$ that encircles zero. Its winding number with respect to zero is thus the same than that with respect to $f(z_0)$.
Moreover,~$f\circ\gamma$ is homotopic, within that annulus,
to the path $t\mapsto f(z_0)+R\lambda e^{2i\pi t}$ where $\lambda$ is the center of $B'$ so the
winding number is 1. Because of~\eqref{locRootAssumpt}, $0\notin f'(B)$ so all possible roots within $B$ are simple
and
\[
|f^{-1}(0) \cap B | = \frac{1}{2i\pi}\int_{\gamma} \frac{f'(z)}{f(z)} dz = 
\frac{1}{2i\pi}\int_{f\circ\gamma} \frac{d\zeta}{\zeta} = \frac{1}{2i\pi}\int_{f\circ\gamma} \frac{d\zeta}{\zeta-f(z_0)} = 1
\]
\ie $f$ admits exactly one root in $B$.
\endproof

\subsection{How to prove the convergence of Newton refinements}

The practical limitation of the classical estimates presented in Section~\ref{par:Newton}
is that the radius on which Newton's method behaves bi-exponentially or exponentially
is not, usually, an explicit one.
The following result adresses this issue, at least for simple roots.
For further results, see~\cite[Chap.~6]{HEN1974}.

\begin{thm}\label{thm:proofContract}
Let us consider a simple root $z_\ast$ of $P\in\C[z]$ and $z_0 \in \disk(z_\ast,\eta)$.
For $\varepsilon>3\eta$, if there exists a disk $B'$ satisfying
\begin{equation}\label{locNewtonAssumpt}
P'(\disk(z_0,\varepsilon)) \subset B' \qquad \text{with}\qquad \disk(B',0) > 2\operatorname{diam}(B'),
\end{equation}
then one has $N_P(\disk(z_0,\varepsilon)) \subsetneq \disk(z_0,\varepsilon)$ and thus $\disk(z_0,\varepsilon)$
is contained in the attraction bassin $\attract(z_\ast)=
\{ z\in\C \,;\, \lim\limits_{n\to\infty} N_P^n(z) = z_\ast\}$.
\end{thm}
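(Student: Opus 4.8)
The plan is to show that $N_P$ is a contraction of $D:=\disk(z_0,\varepsilon)$ towards the root $z_\ast$, but measuring distances from $z_\ast$ rather than from $z_0$. Write $B'=\disk(\lambda,\rho)$, so that hypothesis~\eqref{locNewtonAssumpt} reads $|\lambda|-\rho>4\rho$, i.e. $d(0,B')>4\rho$ with $\diam(B')=2\rho$; in particular $0\notin B'$, so $N_P$ is well defined on $D$. Note also that $z_\ast\in D$, since $|z_0-z_\ast|\leq\eta<\varepsilon/3<\varepsilon$.

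First I would establish the pointwise contraction estimate. Fix $z\in D$. Since $D$ is convex and contains both $z$ and $z_\ast$, the segment $[z_\ast,z]$ lies in $D$, and the fundamental theorem of calculus gives
\begin{equation*}
P(z) = (z-z_\ast)\int_0^1 P'\!\big(z_\ast+t(z-z_\ast)\big)\,dt = (z-z_\ast)\,A,
\end{equation*}
where, by convexity of the disk $B'$ and the inclusion $P'(D)\subset B'$, the average $A$ again belongs to $B'$. Consequently
\begin{equation*}
N_P(z)-z_\ast = (z-z_\ast)\Big(1-\frac{A}{P'(z)}\Big) = (z-z_\ast)\,\frac{P'(z)-A}{P'(z)},
\end{equation*}
and since both $P'(z)$ and $A$ lie in $B'$ we have $|P'(z)-A|\leq\diam(B')$ while $|P'(z)|\geq d(0,B')>2\diam(B')$, so the quotient has modulus $<\tfrac12$ and
\begin{equation*}
|N_P(z)-z_\ast| < \tfrac12\,|z-z_\ast|.
\end{equation*}
This is the heart of the argument; it is the exact quantitative analogue, applied to $P'$, of the Rouché-type estimate used in Theorem~\ref{thm:proofRoots}.

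Next I would turn this into the stated inclusion. For $z\in D$ one has $|z-z_\ast|\leq|z-z_0|+|z_0-z_\ast|\leq\varepsilon+\eta$, hence
\begin{equation*}
|N_P(z)-z_0| \leq |N_P(z)-z_\ast|+|z_\ast-z_0| < \tfrac12(\varepsilon+\eta)+\eta = \frac{\varepsilon+3\eta}{2} < \varepsilon,
\end{equation*}
the last inequality being precisely the hypothesis $\varepsilon>3\eta$ — this is where that constant is spent. Thus $N_P(D)$ is contained in the strictly smaller disk $\disk\!\big(z_0,\tfrac{\varepsilon+3\eta}{2}\big)$, giving $N_P(D)\subsetneq D$. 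In particular $N_P$ maps $D$ into itself, so the contraction estimate may be iterated: $|N_P^{\,n}(z)-z_\ast|<2^{-n}|z-z_\ast|\to 0$ for every $z\in D$, i.e. $D\subset\attract(z_\ast)$.

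I expect the only point requiring care to be in the second paragraph: one must check that the mean-value representation of $P(z)$ is legitimate (the segment $[z_\ast,z]$ stays in $D$, which uses convexity of the disk $D$ together with $z_\ast\in D$, the latter requiring $\eta<\varepsilon$) and that the average of $P'$ along it remains in $B'$ (automatic since $B'$ is a disk, hence convex). Everything else is bookkeeping; the real content is the interlocking of the two numerical thresholds — $d(0,B')>2\diam(B')$ produces a contraction factor below $\tfrac12$, and $\varepsilon>3\eta$ converts that factor into a genuine self-map of $\disk(z_0,\varepsilon)$.
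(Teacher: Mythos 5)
Your proof is correct, and it takes a slightly different route than the paper. The paper works centered at $z_0$: it writes $N_P(z_0+h)-z_0$ by splitting $P(z_0+h)-P(z_\ast)$ into $P(z_0+h)-P(z_0)$ and $P(z_0)-P(z_\ast)$, represents both differences as integrals of $P'$ over segments contained in $\disk(z_0,\varepsilon)$, and bounds ratios of two elements of $B'$ by $\tfrac12$ and $\tfrac32$ respectively, obtaining $|N_P(z_0+h)-z_0|<\tfrac12|h|+\tfrac32|z_0-z_\ast|\leq\tfrac{\varepsilon+3\eta}{2}<\varepsilon$; it then concludes the basin statement qualitatively, by noting that the forward-invariant disk lies in the Fatou set of $N_P$ and that the iterates there must converge to $z_\ast$. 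You instead prove a pointwise contraction toward $z_\ast$: writing $P(z)=(z-z_\ast)A$ with $A$ the average of $P'$ on $[z_\ast,z]$ (legitimate, as you note, since the segment lies in the convex set $\disk(z_0,\varepsilon)$ and $B'$ is convex), you get $N_P(z)-z_\ast=(z-z_\ast)\tfrac{P'(z)-A}{P'(z)}$ with factor $\leq\diam(B')/d(0,B')<\tfrac12$, and then the triangle inequality delivers the same radius $\tfrac{\varepsilon+3\eta}{2}$ and hence the strict self-map; both arguments spend the two hypotheses in exactly the same way. What your variant buys is a cleaner and fully quantitative ending: the geometric rate $2^{-n}$ makes $D\subset\attract(z_\ast)$ immediate without invoking normal families, whereas the paper's decomposition around $z_0$ is what motivates its remark on the effective Lipschitz constant $1-\tfrac{3\eta}{2\varepsilon}$ of $N_P$ on the disk. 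One cosmetic point: your strict inequality $|N_P(z)-z_\ast|<\tfrac12|z-z_\ast|$ degenerates to an equality of zeros at $z=z_\ast$; reading it as $\leq$ there changes nothing, since $|N_P(z_\ast)-z_0|=|z_\ast-z_0|<\eta<\varepsilon$ and the iteration argument only needs the non-strict bound.
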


\begin{remark}
The typical application case of Theorem~\ref{thm:proofContract} concerns an approximate root $z_0$
that is known, thanks to Theorem~\ref{thm:proofRoots}, to be in $\disk(z_\ast,\varepsilon_1)$.
To save resources, we want to publish $z_0'$, which is an approximate value of $z_0$ with a reduced precision $\varepsilon_2\gg\varepsilon_1$.
Theorem~\ref{thm:proofContract} gives a condition that garanties that we can do so without losing any important information:
an approximation of $z_\ast$ to an arbitrary high precision can be retrieved
from the sole knowledge of $z_0'$ by applying iterates of $N_P$
The assumption~\eqref{locNewtonAssumpt} can be checked using disk arithmetic (see \S\ref{par:controlNum}).
Note that, in the proof,  the loss of $\frac{\varepsilon-3\eta}{2}$ on the radius suggests that the Lipschitz constant
of $N_P$ is approximately Lipschitz of order $1-\frac{3\eta}{2\varepsilon}$,
which hints that the next iterations of $N_P$ will converge at least exponentially.
\end{remark}

\proof
Let us consider $B=\disk(z_0,\varepsilon)$ and $P'(B)\subset B'$
where $B'=\disk(d,\varepsilon')$ is a disk such that $|d|>5\varepsilon'$.
In particular $B'$ does not contain zero.
Given $z_0+h \in \disk(z_0,\varepsilon)$, one has:
\begin{align*}
N_P(z_0+h) &= z_0+h - \frac{P(z_0+h)-P(z_0)+P(z_0)-P(z_\ast)}{P'(z_0+h)}\\
&= z_0 + h\left(\int_0^1 1-\frac{P'(z_0+t h)}{P'(z_0+h)} dt\right) + (z_0-z^\ast) \int_0^1 \frac{P'((1-t)z_\ast + t z_0)}{P'(z_0+h)} dt
\end{align*}
The ratio of two values $d+\vartheta_1$ and $d+\vartheta_2$ in $B'$ satisfies:
\[
\left| 1 -\frac{d+\vartheta_1}{d+\vartheta_2} \right| =
\frac{|\vartheta_2-\vartheta_1|}{|d+\vartheta_2|} \leq \frac{2\varepsilon'}{|d|-\varepsilon'}<\frac{1}{2}
\quad\text{and}\quad
\left| \frac{d+\vartheta_1}{d+\vartheta_2} \right| \leq \frac{|d|+\varepsilon'}{|d|-\varepsilon'}<\frac{3}{2}.
\]
One thus has $|N_P(z_0+h)-z_0| < \frac{1}{2}|h|+\frac{3}{2}|z_0-z_\ast|$ \ie
\[
N_P(z_0+h) \in \disk\left(z_0,\frac{\varepsilon}{2}+\frac{3\eta}{2}\right) 
= \disk\left(z_0,\varepsilon-\frac{\varepsilon-3\eta}{2}\right)
\subsetneq \disk(z_0,\varepsilon)
\]
provided $\varepsilon>3\eta$. 
As $\disk(z_0,\varepsilon)$ is forward invariant under $N_P$, this disk is contained in its Fatou set. 
The iterates of $N_P$ on $\disk(z_0,\varepsilon)$ thus converge to $z_\ast$.
\endproof

\subsection{Results on how to control numerical errors}\label{par:controlNum}

Lastly, we need a theoretical background for handling all the errors that occur within the numerical computations that are
necessary to prove that the Theorems~\ref{thm:proofRoots} and~\ref{thm:proofContract} can indeed be applied.
It is often (wrongly) believed that integer arithmetic is the only one apt for formal verification.
We intent to show here that proofs can also be carried in floating point arithmetic, even with complex numbers.

\medskip
The certification of a computation is only possible if the implementation choices respect a universal norm,
as for example the one defined by the IEEE standards~\cite{IEEE}.
We use the library MPFR~\citelib{MPFR} because it offers a reliable implementation
of arbitrary precision that has been extensively tested. One of its main feature is the guaranty of proper
handling of roundings, which is essential for the certification process described below.
However, any other compliant implementation could equally be used as a foundation.

Working with complex numbers requires special care because, contrary to the real case,
multiplications in $\C$ are not an elementary operation. In what follows, we will restrict our attention to
the fields operations: $+$, $\times$.

\subsubsection{Finite precision arithmetic}
An ideal model of arithmetic with a finite precision $N\in\N^\ast$ consists in considering
the following discrete subset of real numbers
\begin{equation}\label{eq:fprecDefRhat}
\hat\fprec_N = \{0\} \cup \bigcup\limits_{e\in\Z} 2^e \fprecRef_{N},
\end{equation}
where
\begin{equation}\label{eq:fprecRefZ}
\fprecRef_{N} = \left\{
\pm\Big(2^{-1} + \sum_{j=2}^N b_j 2^{-j} \Big)
\text{ with } b_2,\ldots,b_N \in \{ 0,1 \}
\right\} = \pm2^{-N} \llbracket 2^{N-1}, 2^N-1\rrbracket.
\end{equation}
When $x\in2^e\fprecRef_{N}$, one defines $e=e(x)$ as the \textit{exponent}
of $x$.
The $(b_j)_{1,\ldots,N}$ are called the \textit{bits} of $x$,
with the convention that $b_1=1$.
Let us point out that if $N'\geq N$ is a larger precision, then $\hat\fprec_N \subset \hat\fprec_{N'}$.

Practical implementations of finite precision arithmetic are restricted
by physical contingencies; one then considers instead the  finite set:
\begin{equation}\label{eq:fprecDefR}
\fprec_N = \{\pm0\} \cup \bigcup_{e=e_{\min}}^{e_{\max}} 2^e \fprecRef_{N}
\cup \{\pm \infty, \operatorname{NaN} \},
\end{equation}
where $-e_{\min}$ and $e_{\max}$ are (configurable but fixed) large positive integers.
The additional symbols allow one to handle numerical
exceptions without producing errors ($\operatorname{NaN}$ stands for \emph{not a number}). 
For efficient processing by the hardware,
bits are packed in groups of 8 called a \textit{byte},
and packs of bytes (typically 4 or~8) are called \textit{limbs}.

\begin{figure}[H]
\captionsetup{width=.95\linewidth}
\begin{center}
\includegraphics[width=.9\textwidth]{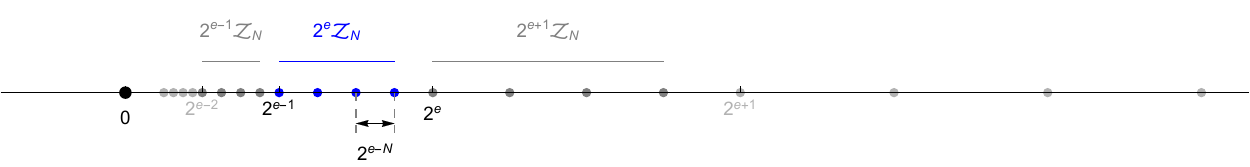}\\
\caption{\label{fig:nbLine}\small
Representation of $2^e\fprecRef_{4}\cap\R_+$ for five consecutive values of $e$.
Note how the gap between adjacent vertices varies with the exponent~$e$,
and the special role of zero as an accumulation point of $\hat\fprec_N$.
}
\end{center}
\end{figure}

\medskip
The arithmetic operations $+_N$ and $\times_N$ are naturally defined on $\hat\fprec_N$ by
\[
\forall x,y \in\hat\fprec_N,\qquad
x +_N y = \hat\round_N (x+y)
\quad\text{and}\quad 
x \times_N y = \hat\round_N (x \times y),
\]
where $\hat\round_N:\R\to\hat\fprec_N$ is the rounding operator that rounds to the nearest lattice point.
Let us also define the operators $\hat\round_N^\pm$ that round respectively always up and always down,
and the practical ones $\round_N$, $\round_N^\pm$, that are valued in~$\fprec_N$. By a convention, called \textit{flush to zero},
which  is not an IEEE standard but is the choice made in the MPFR library, $\round_N(x)=0$ if $|x|< 2^{e_{\min}}$
even though $0$ is not the nearest lattice point if $|x|>2^{e_{\min}-1}$.
Similarly, $|\round_N(x)|=\infty$ if $|x|>2^{e_{\max}}(1-2^{-N})$.

\medskip
For a given number $x\in\hat\fprec_N\backslash\{0\}$, the gap that separates it from its farthest immediate neighbors is
\begin{equation}\label{eq:defULP}
\widehat\ulp(x) = 2^{e(x) - N}.
\end{equation}
By convention, $\widehat\ulp(0)=0$.
The name of this operator is \textit{unit on last position} because it reflects the metric effect of a change of one unit on the least significant bit $b_N$.
If $x\in\fprec_N$ is a regular number  \ie if~$e_{\min}~\leq~ e(x)~\leq~ e_{\max}$, one sets $\ulp(x) = \widehat\ulp(x)$.
By convention, $\ulp(\pm 0) = 2^{1+e_{min}}$ and $\ulp(\pm \infty)~=~\infty$, which ensures the  following statement.
Note that the usual implementation choice is $e(0) = e_{\min}-1$.

\begin{prop}\label{prop:rounding}
For $ x\in \R$, one has
\[
\left| \hat\round_N(x) - x\right| \leq  {\textstyle\frac{1}{2}} \widehat\ulp(\hat\round_N(x))
\qquad\text{and}\qquad
\left| \round_N(x) - x\right| \leq {\textstyle\frac{1}{2}} \ulp(\round_N(x)).
\]
\end{prop}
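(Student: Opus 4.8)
The plan is to reduce the statement to a computation on the lattice $\hat\fprec_N$ restricted to a single binade, and then to graft on the finite‑precision conventions at the end. First I would observe that $\hat\fprec_N$ is invariant under $x\mapsto -x$ and that $\widehat\ulp$ depends only on $|x|$, so it suffices to treat $x>0$; the case $x=0$ is immediate because $\hat\round_N(0)=0$ and $\widehat\ulp(0)=0$. For $x>0$, let $e\in\Z$ be the unique integer with $2^{e-1}\le x<2^{e}$. The structural fact I would extract from~\eqref{eq:fprecRefZ} is that the points of $\hat\fprec_N$ lying in the closed band $[2^{e-1},2^{e}]$ are exactly
\[
2^{e-N}\,\llbracket 2^{N-1},2^{N}\rrbracket=\bigl\{\,2^{e-1},\;2^{e-1}+2^{e-N},\;\ldots,\;2^{e}\,\bigr\},
\]
an arithmetic progression of common step $\delta:=2^{e-N}$: its interior points $2^{e-N}k$ with $2^{N-1}\le k\le 2^{N}-1$ have exponent $e$ and belong to $2^{e}\fprecRef_N$, while the top endpoint $2^{e}=2^{(e+1)-N}\cdot 2^{N-1}$ has exponent $e+1$ and is the bottom of the next band; no other element of $\hat\fprec_N$ meets the open interval $(2^{e-1},2^{e})$.

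Since $x$ falls in one subinterval $[a,a+\delta]$ bounded by two consecutive points of this progression, nearest‑point rounding yields $|x-\hat\round_N(x)|\le\delta/2$ (with equality only at a midpoint). To convert $\delta/2$ into $\tfrac12\widehat\ulp(\hat\round_N(x))$ I would examine the two possible outputs: if $\hat\round_N(x)\in[2^{e-1},2^{e}-\delta]$ then it has exponent $e$, so $\widehat\ulp(\hat\round_N(x))=\delta$; if instead $\hat\round_N(x)=2^{e}$ then it has exponent $e+1$, so $\widehat\ulp(\hat\round_N(x))=2\delta$. In both cases $\delta\le\widehat\ulp(\hat\round_N(x))$, which gives the first inequality.

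For the second inequality I would split $\R$ into the three regimes of $\round_N$. If $2^{e_{\min}}\le|x|\le 2^{e_{\max}}(1-2^{-N})$, a short check shows that $\hat\round_N(x)$ is a regular number, hence $\round_N(x)=\hat\round_N(x)$ and $\ulp(\round_N(x))=\widehat\ulp(\hat\round_N(x))$, and the first inequality applies verbatim. If $|x|<2^{e_{\min}}$, the flush‑to‑zero rule gives $\round_N(x)=\pm0$ and the convention $\ulp(\pm0)=2^{1+e_{\min}}$ makes $|x-\round_N(x)|=|x|<2^{e_{\min}}=\tfrac12\ulp(\round_N(x))$; this is precisely the inequality that this choice of convention is designed to preserve. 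Finally, if $|x|>2^{e_{\max}}(1-2^{-N})$ then $|\round_N(x)|=\infty$, and since $\ulp(\pm\infty)=\infty$ the bound holds trivially in $\RR$. These three cases exhaust $\R$.

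I do not expect a genuine obstacle: the whole argument is bookkeeping on a lattice whose mesh varies geometrically. The one point that really needs care is the binade boundary $x=2^{e}$, where the spacing — and with it $\widehat\ulp$ — doubles; since the statement evaluates $\widehat\ulp$ at the \emph{output} $\hat\round_N(x)$ rather than at the input, one must consistently compare each rounding error against the exponent of the rounded value and verify (as above) that the doubling always works in our favour. The remaining work is only to confirm that the conventions $\ulp(\pm0)=2^{1+e_{\min}}$ and $\ulp(\pm\infty)=\infty$, together with the flush‑to‑zero rule, are exactly calibrated so that the clean statement carries over from $\hat\fprec_N$ to the physical set $\fprec_N$.
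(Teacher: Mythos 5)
Your proof is correct, and it supplies details the paper omits: the paper introduces the $\ulp$ conventions (in particular $\widehat\ulp(0)=0$, $\ulp(\pm 0)=2^{1+e_{\min}}$, $\ulp(\pm\infty)=\infty$) precisely so that Proposition~\ref{prop:rounding} holds, and then states the proposition with no further argument, treating it as a direct consequence of the definitions. Your account carries out that bookkeeping cleanly, and in particular handles the one genuinely non-trivial point correctly: when $x$ rounds up across a binade boundary to $2^e$, the output's exponent jumps to $e+1$ and $\widehat\ulp(\hat\round_N(x))=2\delta$, so the bound $|x-\hat\round_N(x)|\le\delta/2\le\tfrac12\widehat\ulp(\hat\round_N(x))$ only improves; this is exactly why the statement is phrased with $\widehat\ulp$ evaluated at the output rather than at the input. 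The trichotomy for $\round_N$ (regular band, flush to zero, overflow) matches the paper's conventions exactly. No gaps.
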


\begin{remark}
IEEE standards require gradual underflow instead of flush to zero. This means that $\fprec_N$ should be complemented with
a set of denormalized numbers
\[
\{ k 2^{e_{min}-N} \,;\, k\in\Z, \enspace |k|<2^{N-1}\},\]
called subnormals, which ensures that
the lattice $\fprec_N$ is regular near zero.
In that case, $\ulp(x)=2^{e_{min}-N}$ for all those additional points, including $x=0$. With this alternate convention,
Proposition~\ref{prop:rounding} still holds.
The MPFR library offers the possibility of emulating the norm, but it is not the default behavior.
\end{remark}

\subsubsection{Interval arithmetic}
Interval arithmetic is a standard topic in numerical analysis \cite{MATHEMAGIX}, \cite{ROKNE2001} whenever dependable results are critical.
Reliable and fast libraries exist, like MPFI \cite{MPFI} or Arb \citelib{ARB}.

\medskip
Given a continuous numeric function $f$,
the goal of interval arithmetic is to bound, as accurately as possible,
the set to which $f(x)$ belongs when the prior knowledge on $x$ is limited to a set of inequalities, \eg $a\leq x\leq b$. 
The main challenge is to take dependency into account: for example, $f(x)=x^2+x$ maps $[-1,1]$ to $[-\frac{1}{4},2]$ while
$g(x,y)=xy+x$ maps $[-1,1]^2$ to $[-2,2]$.

\medskip
The following statement is an immediate but essential consequence of Proposition~\ref{prop:rounding}.
\begin{prop}\label{prop:intervalArithmetic}
For $x_a,x_b\in\fprec_N$ and $r_a,r_b\in\fprec_M$, one has:
\begin{gather*}
\interval(x_a,r_a) + \interval(x_b,r_b) \subset \interval\left(x_a+_N x_b \,,\,
\round_M^+ \big( r_a + r_b +  {\textstyle\frac{1}{2}}\ulp(x_a +_N x_b)\big)
\right),
\\[1ex]
\interval(x_a,r_a)\times \interval(x_b,r_b) \subset \interval\left(x_a \times_N x_b \,,\,
\round_M^+ \big( r_a  r_b  + r_a|x_b|+ r_b|x_a| +  {\textstyle\frac{1}{2}}\ulp(x_a \times_N x_b)\big) \right)
\end{gather*}
where $\interval(x,r) = (x-r,x+r)$ denotes open intervals along the real line.
\end{prop}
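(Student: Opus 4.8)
The plan is to unwind both inclusions pointwise. Fix arbitrary elements $u\in\interval(x_a,r_a)$ and $v\in\interval(x_b,r_b)$, so that $|u-x_a|<r_a$ and $|v-x_b|<r_b$, and bound the distance from $u+v$ (resp. $uv$) to the rounded machine value by a triangle inequality that splits the error into a \emph{mathematical} part — how far the exact operation can spread the input intervals — and a \emph{rounding} part — the defect between the exact result and its nearest-lattice image. The first part is elementary algebra; the second is exactly Proposition~\ref{prop:rounding}. Finally, since the round-up operator is monotone and satisfies $\round_M^+(t)\ge t$ for $t\ge 0$, replacing the resulting real radius by its image under $\round_M^+$ only enlarges the target interval and makes the radius an element of $\fprec_M$, so the inclusions are preserved.

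For the sum I would write $u+v=(x_a+x_b)+\big((u-x_a)+(v-x_b)\big)$, giving $|(u+v)-(x_a+x_b)|<r_a+r_b$. Since $x_a+_Nx_b=\round_N(x_a+x_b)$, Proposition~\ref{prop:rounding} yields $|(x_a+x_b)-(x_a+_Nx_b)|\le\tfrac12\ulp(x_a+_Nx_b)$, hence $|(u+v)-(x_a+_Nx_b)|<r_a+r_b+\tfrac12\ulp(x_a+_Nx_b)\le\round_M^+\!\big(r_a+r_b+\tfrac12\ulp(x_a+_Nx_b)\big)$, which is the claimed inclusion.

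For the product I would set $\delta_a=u-x_a$, $\delta_b=v-x_b$ and expand $uv=x_ax_b+x_a\delta_b+x_b\delta_a+\delta_a\delta_b$, so that $|uv-x_ax_b|\le|x_a|\,|\delta_b|+|x_b|\,|\delta_a|+|\delta_a|\,|\delta_b|<r_a|x_b|+r_b|x_a|+r_ar_b$. Adding the rounding defect $|x_ax_b-x_a\times_Nx_b|\le\tfrac12\ulp(x_a\times_Nx_b)$ from Proposition~\ref{prop:rounding} and rounding the total radius up with $\round_M^+$ gives the second inclusion.

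No step here is genuinely hard; the only thing that needs attention — and the reason the statement is phrased with $\fprec_N$ rather than $\hat\fprec_N$ — is the exceptional arithmetic. Under the flush-to-zero convention, if the exact result underflows then $x_a+_Nx_b$ (resp. $x_a\times_Nx_b$) equals $0$ and $\ulp(0)=2^{1+e_{\min}}$ has been fixed large enough that the rounding bound of Proposition~\ref{prop:rounding} still covers it; if it overflows, the radius becomes $\infty$ and the inclusion is vacuous. One also checks that the strict inequalities on the inputs survive all the way to the final $\round_M^+$, so the open/closed ambiguity in the notation $\interval(\cdot,\cdot)$ is immaterial. The main obstacle is therefore purely the bookkeeping around the boundary conventions for $\ulp$ at $0$ and $\infty$, not any analytic difficulty.
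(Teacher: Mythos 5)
Your proof is correct and is exactly the argument the paper has in mind: it states Proposition~\ref{prop:intervalArithmetic} as an ``immediate but essential consequence of Proposition~\ref{prop:rounding}'' without further detail, and your triangle-inequality split of the error into the exact-arithmetic spread of the input intervals plus the rounding defect, followed by monotone upward rounding of the radius, is precisely that immediate consequence. Your closing remarks on the $\ulp$ conventions at $0$ and $\infty$ are also consistent with the paper's setup of $\fprec_N$ and $\round_N^\pm$.
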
\noindent%
In practice, the new radius is computed using the $\round_M^+$ operator for each intermediary computation
to ensure that one gets a certifiable upper bound.

\subsubsection{Naive rectangle arithmetic}
For our purpose in complex dynamics, a naive use
of a tensorized interval arithmetic faces the following shortcoming (see Fig.~\ref{fig:compIntervalDisk}):
if~$a$ is in~$z+[-r,r]+ i [-r,+r]$ with $|z|=1$, then $a^2$ belongs to $z^2+(2r |z|_1+r^2) \, [-1,1] + i (2r |z|_1+2r^2) \, [-1,1]$,
with $|z|_1=|\Re z|+|\Im z|$,
which means that, when $\operatorname{Arg} z\simeq \pi/4$, the size of the uncertainty box is roughly multiplied by $2\sqrt{2}$
instead of the factor 2 imposed by the derivative.
If this happens at many iterations of the square function, the resulting precision loss is catastrophic. 
The average value of $\log |e^{i \theta}|_1$ on the unit circle is $\beta = 0.2365$.
By the Birkhoff ergodic theorem, for a typical starting point $a$ with $|a| = 1$, computing $n$ iterates of the map $z \mapsto z^2$
starting at~$a$ induces a cumulative multiplicative loss of precision of about $e^{\beta n}$.
For example, for $n=200$, a loss of precision of order of $10^{20}$ should be expected.

\subsubsection{Disk arithmetic}
The idea of disk arithmetic consists in studying what optimal outcome can be deduced from the prior knowledge
that $a\in \disk(z,r)$. While the idea is not new \cite{COMPLEXDISKARITH}, it has remained, up to now, fairly uncommon.

\medskip
The practical gain of substituting disks to squares is illustrated on Figure~\ref{fig:compIntervalDisk}. If $a\in \disk(z,r)$,
then~$a^2$ belongs to $\disk(z^2,2r|z| + r^2)$. If $r$ is small enough to ensure that $r^2\ll 2r|z|$ but without
being necessarily tiny,  the first observation is that each iteration of the square function multiplies the radius
of the uncertainty disk by roughly a factor~$2|z|$ instead of $2|z|_1$;
one thus gains a casual $\sqrt{2}$ factor over the naive use of interval arithmetic.

The second observation is more profound and pertains to small values of $r$:
for any conformal map~$f$, the image of a small disk is almost a disk. 
Naturally, a nearly circular shape can be enclosed within a disk of a barely larger diameter.
In practice, $r$ is infinitesimally small and the derivative of the map on the disk of radius $r$ is almost constant. In comparison to the unavoidable action of the differential~$f'(z)dz$ at the center point,
the additive loss on the bounding radius of $f(\disk(z,r))$ is then of order $r^2$ and the multiplicative loss is thus about $1+r/|f'(z)|$.
For example, if we perform $k=10^5$ iterations with bounded derivatives $|f'(z)|\geq1$ along the orbit, starting from a radius $r=10^{-20}$,
then the cumulative multiplicative error of the disk arithmetic method is about $(1+r)^k \simeq 1 + kr = 1 + 10^{-15} \simeq 1$.

\medskip
Using disk arithmetic is the gateway to getting bounds that are  both proven and almost optimal, even after a high number of iterates.
It is a first essential step towards computational proofs in complex dynamics.

\begin{figure}[H]
\captionsetup{width=.93\linewidth}
\begin{center}
\includegraphics[width=.4\textwidth]{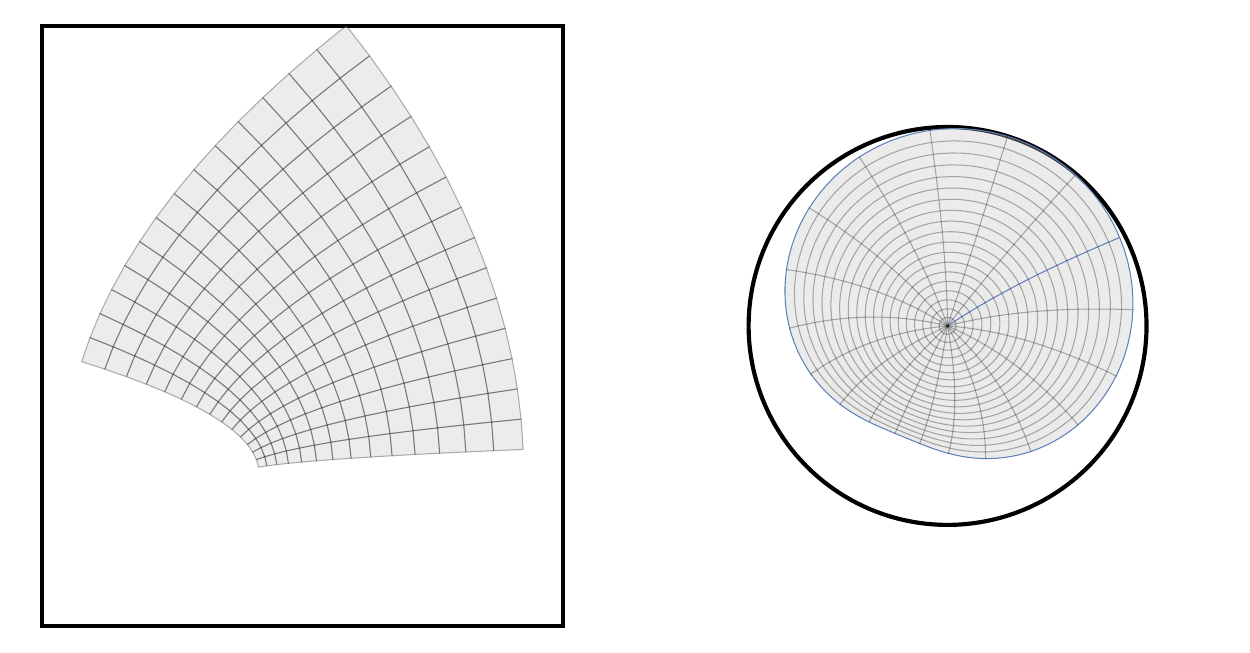} \quad\rule{.1pt}{100pt}\qquad
\includegraphics[width=.4\textwidth]{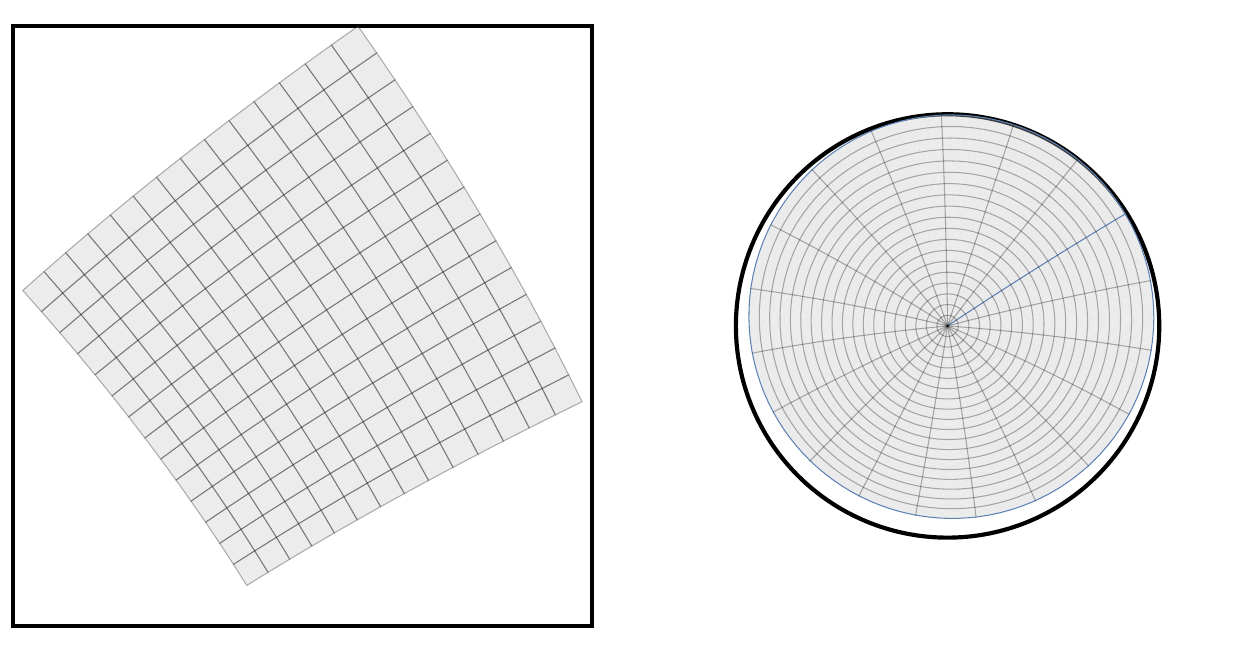}\\
\caption{\label{fig:compIntervalDisk}\small
Comparison between the image by $f(z)=z^2$ of a square centered at $z=e^{3i\pi/16}$ of side $2r$,
and that of a disks of center $z$ and diameter $2r$ for $r=0.5$ (left) and $r=0.1$ (right).
The images of the square and of the disk are drawn at the same scale, and compared respectively
to an enclosing box or disk centered around $z^2$, which is of optimal size, provided that uniformity with
respect to $\operatorname{Arg} z$ is required.
}
\end{center}
\end{figure}

\medskip
For $z_a=x_a+iy_a$, $z_b=x_b+iy_b \in\hat\fprec_{N} + i \hat\fprec_{N}$, the sum $z_a +_N z_b$ is naturally defined in components
\[
z_a+_N z_b = (x_a +_N x_b) + i (y_a +_N y_b).
\]
The radical difference between interval and disk arithmetic is that the computation of the product of complex numbers requires
four exact multiplications on the real line, followed by two additions. Therefore, the product requires an intermediary precision $N'\geq N$:
\[
z_a \times_{N,N'} z_b = (x_a  \times_{N'} x_b) -_N  (y_a \times_{N'}  y_b)
+ i \left[ (x_a  \times_{N'}  y_b) +_N (y_a  \times_{N'}  x_b) \right].
\]
Intermediary products are guarantied exact only if $N'\geq 2N$.
Let us extended the definition of $\ulp$ to complex finite precision numbers by
\[
\ulp( x + i y) = \sqrt{ \ulp(x)^2 +  \ulp(y)^2}.
\]
The following statement generalises Proposition~\ref{prop:intervalArithmetic} and is at the heart of the implementation of our library~\citelib{MLib}.
\begin{thm}\label{thm:diskArithmetic}
For centers $z_a,z_b \in \fprec_{N} + i \fprec_{N}$, radii $r_a,r_b\in \fprec_{M}$, one has:
\[
\disk(z_a,r_a) + \disk(z_b,r_b) \subset \disk\left(z_a +_N z_b \,,\,
\round_M^+\big(  r_a + r_b + {\textstyle\frac{1}{2}}\ulp(z_a+_N z_b) \big) 
\right).
\]
For any intermediary precision $N'\geq N$, the product of exact centers satisfies:
\[
z_a z_b \in \disk\left( z_a \times_{N,N'} z_b, R_\ast \right),
\]
where
\[
R_\ast = \round_M^+\Big( {\textstyle\frac{1}{2}} \ulp(z_a \times_{N,N'} z_b) +
{\textstyle\frac{1}{2}}\sum_{\substack{u=x_a,y_a\\ v= x_b,y_p}} \ulp(u \times_{N'} v)
\Big).
\]
The product of disks satisfies:
\[
\disk(z_a,r_a) \times \disk(z_b,r_b) \subset \disk\left(
z_a \times_{N,N'} z_b \,,\,
\round_M^+\big(  r_ar_b + r_a |z_b| + r_b |z_a| + R_\ast\big)
\right).
\]
Finally,  if $x \in \fprec_N$ and $r\in \fprec_M$, 
 the scaling transform of the disk satisfies:
\[
\interval(x,r) \times \disk(z_a,r_a)\subset \disk\left(
x \times_N z_a \,,\,
\round_M^+\big(  (|x|+r) r_a  + r |z_a| + {\textstyle\frac{1}{2}}\ulp(x \times_N z_a) \big)
\right).
\]
\end{thm}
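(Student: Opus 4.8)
The plan is to reduce every one of the four inclusions to a single elementary tool, Proposition~\ref{prop:rounding}, together with the algebraic identities that express complex $+$ and $\times$ through real arithmetic. Two auxiliary observations will be used repeatedly. First, a \emph{complex rounding bound}: if the real and imaginary parts of $w\in\R+i\R$ are each rounded to $\fprec_N$, then by Proposition~\ref{prop:rounding} the two componentwise errors are at most $\tfrac12\ulp$ of the rounded components, hence the complex error has modulus at most $\sqrt{(\tfrac12\ulp(\Re))^2+(\tfrac12\ulp(\Im))^2}=\tfrac12\ulp(\cdot)$ by the very definition of $\ulp$ on $\fprec_N+i\fprec_N$. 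Second, the Euclidean triangle inequality in the form $\sqrt{(a_1+a_2)^2+(b_1+b_2)^2}\le\sqrt{a_1^2+b_1^2}+\sqrt{a_2^2+b_2^2}$, which will let us peel off the error of a fully rounded outer operation (absorbed into one $\tfrac12\ulp$ of the final complex result) from the errors of the intermediate real products.

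For the sum, writing $a=z_a+u$, $b=z_b+v$ with $|u|\le r_a$ and $|v|\le r_b$, the exact value $a+b$ lies in $\disk(z_a+z_b,r_a+r_b)$; the complex rounding bound then shifts the centre to $z_a+_N z_b$ at the expense of an extra $\tfrac12\ulp(z_a+_N z_b)$, and since $\round_M^+$ over-estimates and is monotone, the announced radius follows. The core step is the product of exact centres. I would expand $z_az_b=(x_ax_b-y_ay_b)+i(x_ay_b+y_ax_b)$ and compare it termwise with $z_a\times_{N,N'}z_b$. Each of the four intermediate products $u\times_{N'}v$ carries an error of modulus at most $\tfrac12\ulp(u\times_{N'}v)$ by Proposition~\ref{prop:rounding}, uniformly in $N'\ge N$ (for $N'\ge 2N$ it is exact, but the bound still holds); and each of the two outer combinations $-_N$ and $+_N$ adds an error bounded by $\tfrac12\ulp$ of the corresponding part of $z_a\times_{N,N'}z_b$. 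Grouping the four intermediate errors into their real and imaginary channels, applying the Euclidean triangle inequality to separate this contribution from the two outer-rounding errors, and invoking the complex rounding bound on the latter, the total error is at most $\tfrac12\ulp(z_a\times_{N,N'}z_b)+\tfrac12\sum\ulp(u\times_{N'}v)$ over the four products, which is precisely the quantity to which $\round_M^+$ is applied in $R_\ast$.

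The remaining two inclusions follow by bilinearity from the product of exact centres. With $a=z_a+u$ and $b=z_b+v$ one has $ab=z_az_b+(z_av+uz_b+uv)$, so $ab\in\disk(z_az_b,\,r_ar_b+r_a|z_b|+r_b|z_a|)$; composing with $z_az_b\in\disk(z_a\times_{N,N'}z_b,R_\ast)$ and over-estimating the sum of radii by $\round_M^+$ gives the product-of-disks statement. The scaling transform is identical in structure: for $\xi\in\interval(x,r)$ and $a\in\disk(z_a,r_a)$, writing $\xi=x+s$ and $a=z_a+u$ with $|s|<r$, $|u|\le r_a$, one gets $\xi a=xz_a+(xu+sz_a+su)\in\disk(xz_a,\,(|x|+r)r_a+r|z_a|)$, and since $x$ is real the centre $xz_a$ is rounded componentwise to $x\times_N z_a$ with complex error at most $\tfrac12\ulp(x\times_N z_a)$, after which $\round_M^+$ closes the estimate. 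In each of these three cases the moduli $|z_a|,|z_b|,|x|$ are, in the actual implementation, replaced by certified upper bounds; this is harmless because they enter the radius only inside $\round_M^+$, which is monotone.

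The single genuinely delicate point is the product of exact centres: one must track six distinct rounding errors (the four intermediate products and the two outer additions), recognise which of them combine through the complex-$\ulp$ identity and which must merely be added, and check that the resulting grouping reproduces the closed form of $R_\ast$ for \emph{every} admissible intermediate precision $N'\ge N$ --- including the underflow and overflow regimes, where the conventions $\ulp(\pm0)$ and $\ulp(\pm\infty)$ fixed in \S\ref{par:controlNum} are exactly what keeps Proposition~\ref{prop:rounding} applicable.
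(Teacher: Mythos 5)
Your proposal is correct and follows essentially the same route as the paper: decompose each operation into exact complex arithmetic on the disks plus a rounding of the new center, bound the center shift by half the diagonal of the $\ulp$ rectangle (which is exactly $\tfrac12\ulp$ in the complex sense), and, for the product, add one $\tfrac12\ulp(u\times_{N'}v)$ per intermediate real product before closing with the monotone upward rounding $\round_M^+$. The paper only sketches this argument (it declares the verification straightforward and lists the same ground rules around Figure~\ref{fig:disk}); your write-up, in particular the Minkowski-type peeling that separates the outer-rounding error from the four intermediate ones in the bound for $R_\ast$, fills in precisely the intended details.
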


\begin{figure}[H]
\captionsetup{width=.8\linewidth}
\begin{minipage}[t]{.49\textwidth}\vspace{0pt}%
\begin{center}
\includegraphics[height=.65\textwidth]{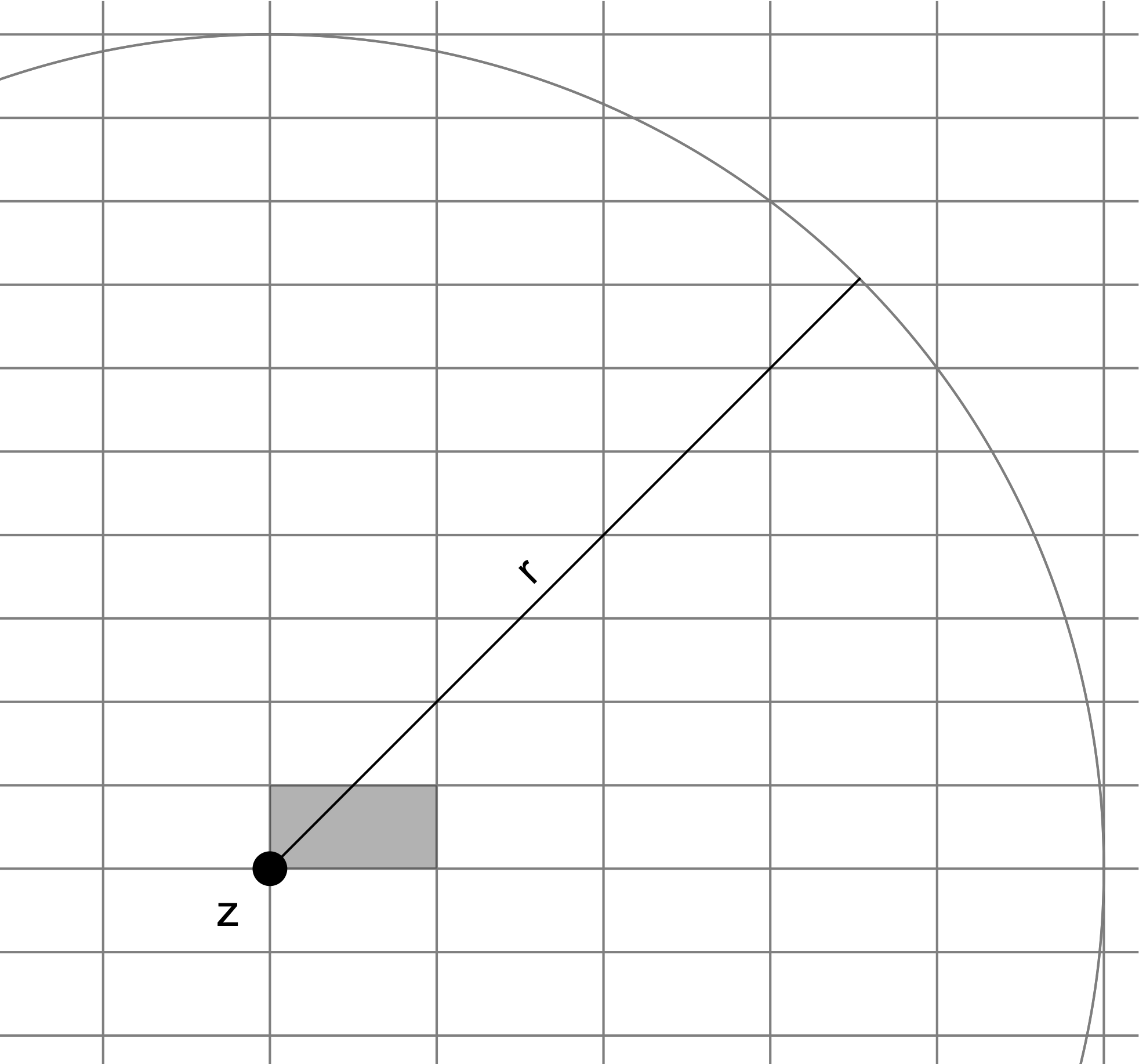}
\end{center}
\end{minipage}
\begin{minipage}[t]{.49\textwidth}\vspace{0pt}%
\begin{center}
\includegraphics[height=.65\textwidth]{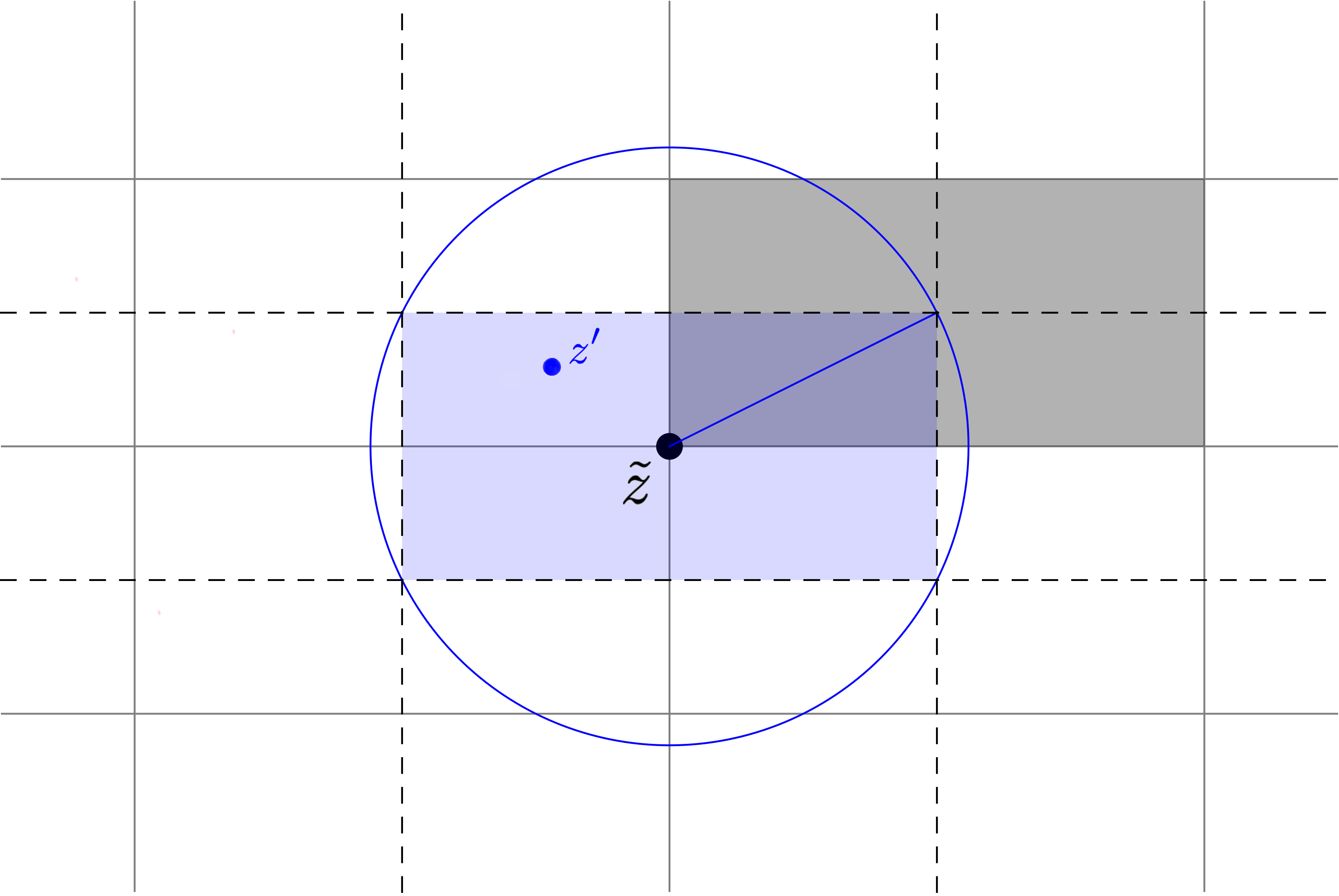}
\end{center}
\end{minipage}
\caption{\label{fig:disk}\small
The key points to the proof of Theorem~\ref{thm:diskArithmetic} :
$\fprec_{N}\times \fprec_{N}$ grid (left)
and the rounding strategy of $z'\in\C$ to $\tilde{z}\in \fprec_{N}\times \fprec_{N}$ within $\frac{1}{2}\ulp\tilde{z}$ (right).
}
\end{figure}

The proof of the theorem is straightforward and the complex extension of the definition of $\ulp$ is illustrated on Figure~\ref{fig:disk}.
In our library, the role of $R_\ast$ is implemented as an on-the-fly modification of $\ulp$.

To ease the tedious task of checking Theorem~\ref{thm:diskArithmetic}, let us recall a few
ground rules. Having Figure~\ref{fig:disk} in mind may help convey the key points.
The center of a disk $\disk_{z,r}$ is always considered exact;
it belongs to $\fprec_{N}\times \fprec_{N}$.
The corresponding $\ulp$ is thus a rectangle of dimensions $\ulp x \times \ulp y$ where $z=x+iy$. 
When computing the result of an operation, the new exact center $z'\in\C$ does not belong,
in general, to the grid $\fprec_{N}\times \fprec_{N}$.
The real and imaginary parts of $z'$ are rounded to their closest value, which gives
the new center $\tilde{z}\in \fprec_{N}\times \fprec_{N}$.
To compensate, one needs to increase the radius of the disk by, at most, half the diagonal of
the $\ulp$ rectangle. The claim that the center can be assumed exact is thus restored
and one is ready for the next operation.

\begin{remark}
The product of complex numbers is a multi-step operation over $\R$. As such, additional
$\ulp$ have to be added to bound the successive rounding errors in each intermediary step.
\end{remark}

\begin{remark}
There is a slightly tighter upper bound for the radius $R_\ast$, namely
\begin{align*}
R^\ast =  \round_M^+ \Big\{
{\textstyle\frac{1}{2}}\ulp\Big( z_a \times_{N,N'} z_b
& + \ulp(x_a \times_{N'}x_b) + \ulp(y_a \times_{N'}y_b)\\
& +i \left[ \ulp(y_a \times_{N'}x_b) + \ulp(x_a \times_{N'}y_b) \right]
\Big)\Big\}.
\end{align*}
However, the code complexity and computational cost of using $R^\ast$ are unreasonably high
compared to that of using $R_\ast$ and are not justified for the expected gain.
\end{remark}


\section{Implementation and numerical results}\label{par:num_results}

In this section, we present the library~\citelib{MLib} and the database~\citelib{MLibData}, which are
a companion to this article and the numerical results that we have obtained with it.
The main practical challenge is to preserve
the efficiency of computations as the scale of the problem spans 12 orders of magnitude.
The algorithm exposed in~\S\ref{par:split} is specifically designed for this.
However, all the logistics regarding process scheduling  and data handling have to keep up.
At this scale, the certification of all the data becomes crucial, not only though the procedure
described in~\S\ref{par:certif} but also, at the lowest level, to ensure that no bit corruption
occurs in the production and storage pipeline.
In its final state, our database takes 43TB of disk space
and we estimate (see Table~\ref{table:overallTime}) the overall cost to regenerate
it from scratch to 83 years-core (723\,000 hours-core) of computation time.
The actual time that was actually necessary to build this library from scratch
actually exceeds 1 million core-hours.

\subsection{Appropriate data structures}\label{par:dataStruct}

Computations are performed with the standard \texttt{FP80} format (\texttt{long double}) for low precision
or with \texttt{mpfr} and \texttt{mpc} numbers (a convenience wrapper for complex numbers)
for arbitrary higher precision. 
However, the foundation of scalable efficiency is the usage of appropriate data structures that are
efficient for storage (both for disk and memory usage\footnote{Because of memory alignement, the size of a structure
in RAM may not match its size on the disk.}), maintenance operations (sort, search, insertion) and
that are easily compatible with high-precision arithmetic.
In \citelib{MLib}, we have developed such structures specifically for this project.

\bigskip
Our core numerical format to store complex numbers in $[-2,2)+i [0,4)$ is the \texttt{u128} format.
It is composed of two unsigned 128-bits integers representing respectively the real and imaginary part with the two leading bits
reserved for the mantissa. A pair $(p,q)$ in \texttt{u128} format thus represents the complex number
\begin{equation}\label{eq:publishedValue}
z=(-2 + p \times 2^{-126}) + i q \times 2^{-126}.
\end{equation}
Roots of $p_n$ and $q_{\ell,n}$ are ultimately stored as \texttt{u128} numbers, which we call the \textsl{published value}
in the rest of this text.
The minimal absolute resolution of published values is therefore about $1.2\times 10^{-38}$.
This resolution is sufficient for our needs:
for example, the minimal distance between two distinct hyperbolic centers of period~41 is only~$2.45\times 10^{-23}$
so pairs of roots will differ by at least 50 bits.

\begin{figure}[H]
\captionsetup{width=.95\linewidth}
\begin{center}
\includegraphics[width=.85\textwidth]{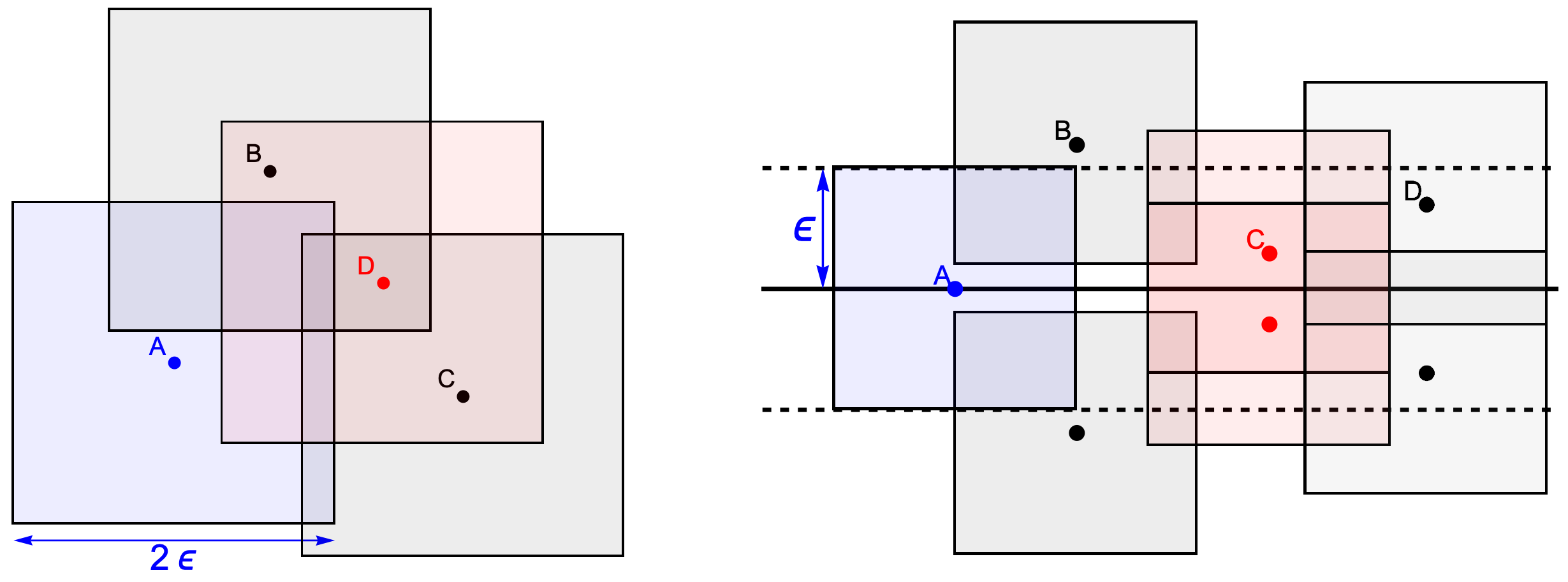}
\caption{\label{fig:nSet}\small
Insertion in \texttt{nset} is non commutative (left): inserting $A,D,B,C$ results in $\{A,D\}$ while
inserting $A,B,C,D$ results in $\{A,B,C\}$. Near the real axis (right), the points $A$, $B$ and $D$
respect the assumption~\eqref{nsetAsumption} so can be added to an \texttt{nset}.
The point $C$ is problematic; the absolute value of its imaginary part
does not exceed~$\epsilon/2$, where $\epsilon$ is the separation parameter,
and should be rounded to zero prior to insertion.
}
\end{center}
\end{figure}

The work-horse of our database is the \texttt{nset} structure. This structure implements the mathematical
idea of a finite set of points in the rectangle $[-2,2)+i (-4,4)$ that is symmetric with respect to the real axis.
Only the points in the upper-plane are stored.
Each stored point in the set is a \texttt{u128} complex number.
Each set comes with a separation parameter $\varepsilon>0$ and all operations on the \texttt{nset}
guaranty the following property: given two distincts points $z_1$, $z_2$ in the set or in its conjugate image:
\begin{equation}\label{nsetAsumption}
\max\{|\Re (z_1-z_2)|,|\Im(z_1-z_2)|\} \geq \varepsilon.
\end{equation}
In other words, points are considered as equal if one center belongs to the square ``pixel'' of side $2\varepsilon$
centered in the other one. Note that this is not an transitive relation (see Fig.~\ref{fig:nSet}).
Special care must be taken near the real axis to avoid collisions with the conjugate images ($z_2=\overline{z_1}$):
if $|\Im(z_1)|<\varepsilon/2$ then $z_1$ is undistinguishable from its conjugate and is therefore considered real;
the value $\Im z_1$ should be set to zero in order to respect~\eqref{nsetAsumption}.
In our database, we use a separation $\varepsilon_h = 3.23\times 10^{-27}$ for hyperbolic centers
and $\varepsilon_m = 8.08\times 10^{-28}$ for Misiurewicz points.
Note that this parameter serves a different purpose than the certification parameters (see Fig.~\ref{fig:rootCertif}).

\medskip
The points in an \texttt{nset} structure are totally ordered by the lexicographical order:
\begin{equation}\label{defLexi}
z_1 \preceq z_2  \qquad\Longleftrightarrow\qquad \Re z_1 < \Re z_2 \quad\text{or}\quad \begin{cases}
\Re z_1 = \Re z_2,\\
\Im z_1 < \Im z_2.
\end{cases}
\end{equation}
To optimize memory management, an \texttt{nset} can exist in two internal states, either locked or unlocked.
In its unlocked state, an \texttt{nset} contains a family of sub-sets (called bars) whose sizes form, ideally, a geometric
progression. Each bar is ordered with~\eqref{defLexi}. Bars are independent from one another (no ordering).
Insertion is performed on the last (smallest) bar unless it is full, in which case the bar is merged with the
previous one (recursively if necessary).
This means that insertion can be performed in $O(1)$ to $O(N)$ operations, where $N$ is the size of the \texttt{nset}.
However, large numbers of operations are exponentially rare: the $k^\text{th}$ bar is only merged into the bar $k+1$
when all the previous bars are full, so once in $O(2^{k})$ insertions. 
If we denote by $B$ is the size of the smallest bar and $K=\log_2 \left(1+\frac{N}{B}\right)$ the total number of bars,
then, on average, an insertion costs
\[
\frac{\displaystyle \sum_{k=1}^K k B 2^{k-1}}{\displaystyle \sum_{k=1}^{K} B 2^{k-1}} = \frac{1+(K-1)2^K}{2^K-1} = O(\log N)
\]
operations of memory management.
Conversely, a search on an unlocked \texttt{nset} takes $O(\log^2 N)$ operations because each
of the $K=O(\log N)$ bars must be searched independently.
Note that in order to ensure~\eqref{nsetAsumption}, an insertion can only be performed after a search
for the new point has confirmed that it is indeed appropriate to add it.
Points near the real line (like the point $C$ on the right-hand side of Figure~\ref{fig:nSet}) must be treated as exceptions:
it is up to the user to decide wether it is appropriate to correct the imaginary part to zero or to refuse to add the point.

In its locked state, all the memory bars of an \texttt{nset} are merged, which means that the set becomes fully ordered.
This is a one-time $O(N)$ cost.
Insertion within a locked \texttt{nset} is not authorized anymore.
Note that unlocking remains possible and, in that case, the previous logarithmic costs can be restored (using dyadically smaller
bars before the main data chunk).
In a locked \texttt{nset}, the cost of a search drops to $O(\log N)$. A locked \texttt{nset} is essentially ready
to be written to the disk. Actions on an \texttt{nset} structure require only a $O(\log N)$ memory overhead
(pointers to the each memory bar), which turns out to be constant in practice\footnote{No more than 64 memory bars are ever necessary if one uses \texttt{unsigned long} for array indices.}. Note that we haven't implemented deletion
as a guaranty that no point will ever be added unless we know for certain that it should.

\bigskip
We have also implemented \texttt{pset}, \ie planar sets, which is a structure analogous to the \texttt{nset} but
based on \texttt{FP80} numbers (\texttt{long double}) instead of \texttt{u128}. The \texttt{pset} structure
implements the mathematical idea of a finite set of points in the complex plane, with no geometric restrictions.
It provides logarithmic search and insertion times with logarithmic memory overhead.
The theoretical optimum  \cite{FreSak89}, \cite{LAR13} for operations on ordered memory structures is $O(\log N/\log\log N)$.
The performances of our structures are therefore quite satisfactory and may be of general interest.
In particular, aside from improved RAM usage, our low memory footprint minimizes the cost of the conversions
to and from a disk-storage format, which is crucial when handling terabytes of data\footnote{A pointer requires typically 8 bytes
on a 64-bit architecture; implementing \eg a balanced tree of \texttt{FP80} with 2 pointers to the children would therefore induce
about 50\% of memory overhead.}.

\bigskip
Our format to handle vectors of high-precision numbers is called \texttt{mpv}.
This format is optimized for storage and large scale disk access. 
In a given vector, all the elements share a common precision. 
The vector can either be real or complex, with consecutive pairs of real/imaginary parts.
Individual operations on elements are possible, though awkward.
One important feature of this format is the possibility to concatenate multiple vectors into a single file (what we call mini-files)
and to perform parallel read-writes on each mini-file.

\bigskip
Our implementation also contains tools to import, export and compare \texttt{CSV} files (comma separated values)
of complex numbers of arbitrary precision
(using $a, b$ to represent $a+ib$). This format is extremely slow because of the binary to decimal conversion and requires
about 1.8 times more disk space.
It ensures however a human-readable output and a minimal compatibility with other computing or graphical tools.

\subsection{Certification of the results and protection against data corruption}\label{par:certif2}

Each published value (see~\eqref{eq:publishedValue} for a definition)
for a root of $p_n$ or $q_{\ell,n}$ comes with multiple levels certifications.
Let us underline that the following theorem contains about $3\times 10^{12}$ individual
statements, whose proof are computer assisted.
\begin{thm}\label{thm:certif}
There are constants $\varepsilon_R$, $\varepsilon_N$ and $\varepsilon_S$ given in Figure~\ref{fig:rootCertif}
for which the following holds.
In the \texttt{nset} database for the roots of $p_n$ or $q_{\ell,n}$, each published value $z_j\in 2^{-126}(\Z+i\N)$
can be paired with a unique
actual root $z_\ast \in \disk(z_j,\varepsilon_R)$. This pairing is bijective in the upper half plane $\Im z\geq 0$.
Each pair of published values $z_j,z_k$ satisfies $|z_j-z_k|\geq \varepsilon_S$ and either $z_j\in\R$ or $|z_j-\bar{z}_j|\geq \varepsilon_S$.
Lastly, the disk $\disk(z_j,\varepsilon_N)$ is entirely contained in the
attraction bassin of $z_\ast$ for the Newton method of the associated polynomial.
\end{thm}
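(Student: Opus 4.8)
The statement bundles roughly $3\times10^{12}$ elementary assertions, one per entry of the \texttt{nset} database, so the plan is to reduce it to a finite list of numerical inequalities that the library~\citelib{MLib} checks rigorously, each inequality being validated by the disk arithmetic of Theorem~\ref{thm:diskArithmetic}. Fix the relevant polynomial $P$ — namely $p_n$ for the hyperbolic database, $q_{\ell,n}$ for the Misiurewicz one — and recall that the points to be certified, the elements of $\hyp(n)$ resp. $\mis(\ell,n)$, are \emph{simple} roots of $P$ (roots of $p_n$ are simple; for $\mis(\ell,n)$ this is Theorem~\ref{thm:fact_mis}). A published value $z_j\in2^{-126}(\Z+i\N)$ is an exact dyadic number, hence represented without error at the working precision $N$, so disk arithmetic started from the point $\disk(z_j,0)$ is exact at the first step. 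For each $z_j$ I would evaluate $P(z_j)$ and, propagating a disk of radius $\varepsilon_R$ through the defining recursion $p_{k+1}=p_k^2+z$ (resp. $q_{\ell,n}=p_{\ell+n}-p_\ell$), enclose $P'(\disk(z_j,\varepsilon_R))$ in a disk $B'_j$; disk arithmetic makes these output disks guaranteed enclosures. Checking $\varepsilon_R\,\dist(0,B'_j)>|P(z_j)|$, that is \eqref{locRootAssumpt}, and invoking Theorem~\ref{thm:proofRoots} yields the unique (automatically simple) root $z_\ast\in\disk(z_j,\varepsilon_R)$, together with the certified bound $\eta_j=|P(z_j)|/\dist(0,B'_j)$ on $|z_j-z_\ast|$ that a Rouché-type estimate extracts from its proof, sharpened if needed by one certified Newton step. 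As $P$ has real coefficients and $z_j$ is either real or satisfies $2|\Im z_j|\ge\varepsilon_S>2\varepsilon_R$, uniqueness forces $z_\ast$ real when $z_j$ is real and $\Im z_\ast>0$ otherwise, so the pairing maps upper half-plane entries to upper half-plane roots. To certify that $z_\ast$ belongs to $\hyp(n)$, resp. $\mis(\ell,n)$, rather than to a lower stratum, I additionally verify by disk arithmetic that $\disk(z_j,\varepsilon_R)$ contains no root of $p_k$ for strict divisors $k\mid n$ (resp. no root of $q_{\ell-1,n}$, nor of $q_{\ell,k}$ for strict divisors $k\mid n$), which is exactly \eqref{def:mis}; for $q_{\ell,n}$ the multiple roots $\hyp(k)$ require no separate treatment, since Theorem~\ref{thm:proofRoots} already guarantees $z_\ast$ simple.

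With $\eta_j$ at hand I would check $\varepsilon_N>3\eta_j$ and, by one further disk-arithmetic evaluation, enclose $P'(\disk(z_j,\varepsilon_N))$ in a disk $B''_j$ with $\dist(0,B''_j)>2\diam(B''_j)$; Theorem~\ref{thm:proofContract} then certifies $\disk(z_j,\varepsilon_N)\subset\attract(z_\ast)$ for the Newton map of the associated polynomial. The separation claims — $|z_j-z_k|\ge\varepsilon_S$ for distinct published values and $|z_j-\bar z_j|\ge\varepsilon_S$ for non-real ones — hold \emph{by construction} of the \texttt{nset} structure, which enforces \eqref{nsetAsumption} with the separation parameter $\varepsilon_h$ resp. $\varepsilon_m$; I take $\varepsilon_S$ equal to that parameter and, as part of the protocol, rescan the final database to confirm \eqref{nsetAsumption} verbatim. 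Choosing the parameters so that $\varepsilon_S>2\varepsilon_R$ makes the disks $\disk(z_j,\varepsilon_R)$ together with their conjugates pairwise disjoint, so $z_j\mapsto z_\ast$ is injective on $\{\Im z_j\ge0\}$ and separates conjugate classes.

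Bijectivity then follows by counting. Injectivity, together with the verified membership of each $z_\ast$ in $\hyp(n)\cap\{\Im\ge0\}$ (resp. $\mis(\ell,n)\cap\{\Im\ge0\}$) and the conjugation-symmetry of these sets, produces $2\,\#\{z_j:\Im z_j>0\}+\#\{z_j:\Im z_j=0\}$ distinct elements of $\hyp(n)$ (resp. $\mis(\ell,n)$), hence $\le|\hyp(n)|$ (resp. $\le|\mis(\ell,n)|$); the library checks that this count actually \emph{equals} the exact cardinality supplied by Theorem~\ref{thm:countingHyp}, resp. by \eqref{eq:MisCount}, and equality upgrades the injection to a bijection onto the roots in the closed upper half-plane — the asserted statement. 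The same disk-arithmetic certificates, combined with the checksums that guard the production and storage pipeline, also certify that the published values have not been corrupted between computation and storage.

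The hard part is not any single step but making all of these certified evaluations simultaneously valid and tight enough, uniformly over twelve orders of magnitude of degree. Each $p_n$ is obtained by $\sim n$ successive squarings, so in naive rectangle or interval arithmetic the enclosures would bleed a factor $e^{\beta n}$ of precision (\S\ref{par:controlNum}); disk arithmetic (Theorem~\ref{thm:diskArithmetic}) is what keeps the loss under control, and 128-bit arithmetic becomes unavoidable once the root clusters near $c=-2$, where $\Delta_{p_n}\propto4^{-n}$, push $\varepsilon_R$ below $2^{-82}$. One must therefore pin down a precision $N$ and radii $\varepsilon_R<\varepsilon_N$ and $\varepsilon_S$ — all representable within the \texttt{u128} resolution $\sim10^{-38}$ of \eqref{eq:publishedValue} — for which every one of the $\sim3\times10^{12}$ inequalities above provably holds, and then run the verification; at the $O(d\log d)$ cost per polynomial of the certified evaluator, this check, not a closed-form argument, is the real substance of the proof and the bulk of the HPC budget.
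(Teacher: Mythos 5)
Your proposal follows the paper's own route exactly: certify the hypotheses of Theorems~\ref{thm:proofRoots} and~\ref{thm:proofContract} via disk arithmetic (Theorem~\ref{thm:diskArithmetic}), read off the separation from the \texttt{nset} invariant~\eqref{nsetAsumption}, and upgrade the injection to a bijection by matching the certified count against Theorem~\ref{thm:countingHyp} and~\eqref{eq:MisCount}. You also spell out a few steps the paper's terse proof leaves implicit — notably the exclusion of roots of strict divisors, the observation that $\varepsilon_S>2\varepsilon_R$ forces the certification disks (and their conjugates) to be pairwise disjoint, and a quantitative Rouch\'{e}-type bound $\eta_j$ feeding into the Theorem~\ref{thm:proofContract} check — which is a welcome tightening but not a different method.
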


\proof
Using the controlled rounding features of the MPFR library \citelib{MPFR},
we have implemented disk arithmetic \ie~Theorem~\ref{thm:diskArithmetic}.
This allows us to provide, for each root candidate $z_j$, a numerical proof that the assumptions
of Theorem~\ref{thm:proofRoots} on the localisation of each root can indeed be applied.
Thus, we can certify that the published coordinates of each root differ from an actual root by no more
than $10^{-30}$ for $p_n$ or $10^{-35}$ for $q_{\ell,n}$. 
Similarly, using disk arithmetic with $\varepsilon_N$, we can check the assumptions of Theorem~\ref{thm:proofContract}
which ensures the claim on the Newton bassin.
Next, each pair of two published values or any pair with conjugate values satisfies the separation
assumption~\eqref{nsetAsumption} because it is build into our storage structure (see \S\ref{par:dataStruct}).
It is slightly stronger than the one claimed here.
This separation guaranties that it is possible to count unambiguously all the real roots,
and all the non real roots in the upper half-plane. Comparing with the theoretical count given
by Theorem~\ref{thm:countingHyp} and~\eqref{eq:MisCount} ensures our bijection claim.
\endproof

\begin{figure}[H]
\captionsetup{width=.95\linewidth}
\noindent
\includegraphics[width=0.4\textwidth]{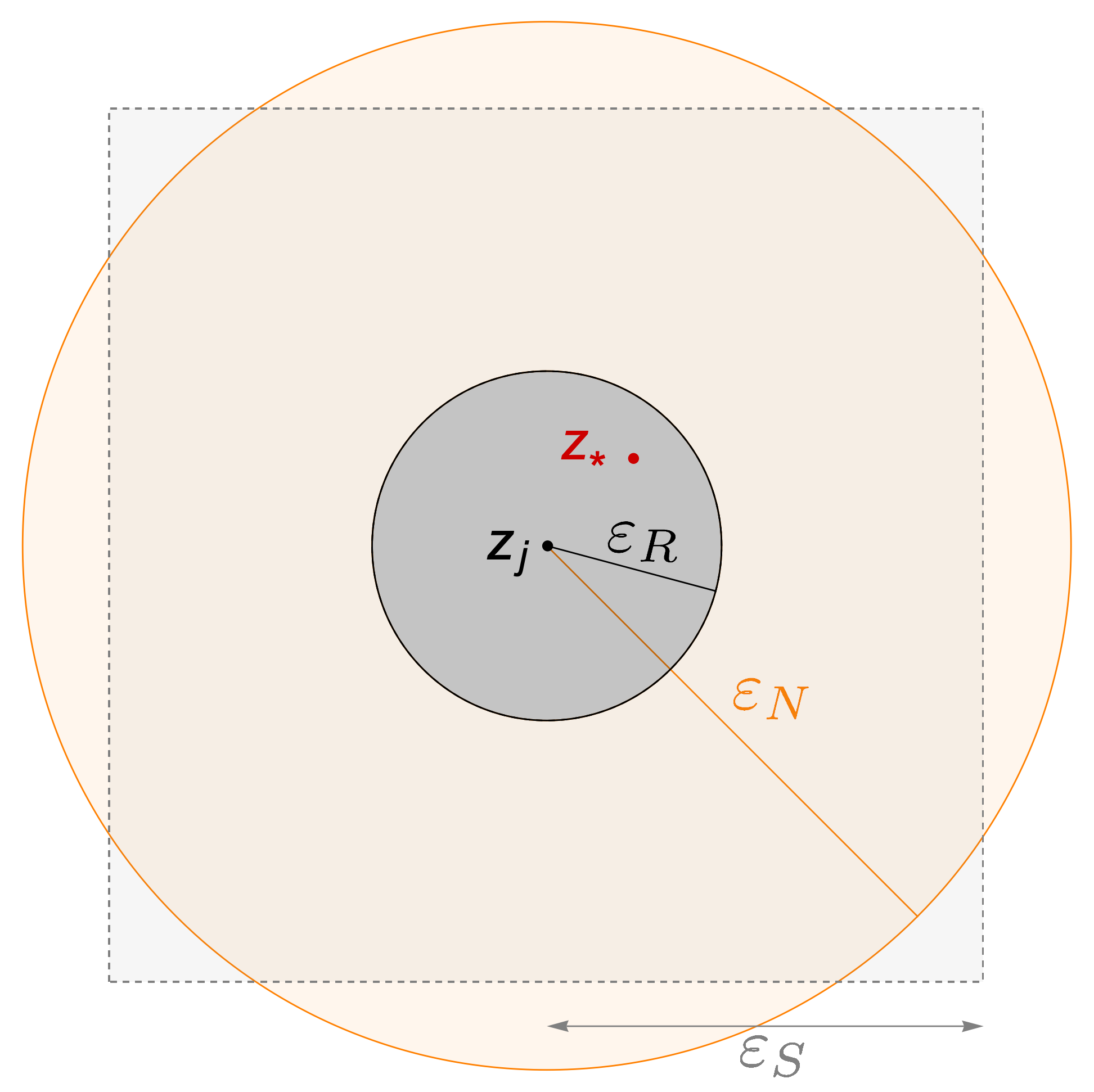}
\qquad
\raisebox{0.2\textwidth}{%
\begin{minipage}{.3\textwidth}
\vspace{0pt}
\[\arraycolsep=10pt\def\arraystretch{1.2}
\begin{array}{|c|c|c|c|}\cline{2-4}
\multicolumn{1}{c|}{} & p_{n\leq 40}  & p_{41} & q_{\ell,n}\\\hline
\varepsilon_R & \multicolumn{2}{c|}{10^{-30}} & 10^{-35} \\\hline
\varepsilon_N & 10^{-24} & 10^{-25} & 10^{-32} \\\hline
\varepsilon_S &  \multicolumn{2}{c|}{3.23\times 10^{-27}} & 8.08\times 10^{-28} \\\hline
\end{array}
\]
\end{minipage}
}
\begin{center}
\caption{\label{fig:rootCertif}\small
For each root $z_\ast$ with $\Im z_\ast\geq0$, the published value $z_j \in 2^{-126}(\Z+i\N)$ comes with different radii.
The radius of separation $\varepsilon_S$ guaranties proper counting,
the radius of certification $\varepsilon_R$ is such that $z_\ast\in \disk(z_j,\varepsilon_R)$  and
the radius of convergence $\varepsilon_N$
ensures that the $\disk(z_j,\varepsilon_N)$ is contained in the bassin of attraction of $z_\ast$
for the Newton method.
}
\end{center}
\end{figure}

\begin{remark}
We are confident that each component of the published values is exact up to
\[
\pm{\textstyle\frac{1}{2}}\ulp(\Re \operatorname{\tt u128}) = \pm2^{-127}\simeq 5.9 \times 10^{-39},
\]
which is better than the $\varepsilon_R$ radius claimed in Theorem~\ref{thm:certif}. Indeed, once we have a $z_j$ value
that passes the certifications for both disk arithmetic and Newton bassin, we can refine its value using Newton's method
until reaching a fixed point at the desired precision~\eqref{eq:publishedValue}, up to a final rounding error.
\end{remark}

At the terabyte scale, the possibility of bit corruption becomes a significant issue.
Besides the previous statement, practical precautions must therefore be taken to ensure the integrity of the data
along the whole production chain, from the initial computation that finds a root to its final storage in a file.
This protection must also extends to any subsequent use of the stored values.

\medskip
Our library~\citelib{MLib} incorporates a strict data certification procedure.
Our \texttt{nset} file format implements a header that contains the MD5 checksum of the data stored
in the rest of the file. When writing a file, a checksum of the original data is first computed in memory,
then the file is written onto the disk and the checksum is written in the file header.
To detect subsequent data corruption (due to an error in a file transfert or a random bit flip),
each time a file is loaded, the checksum of the data stored in the file is computed again
and checked against its original value stored in the header. This protocol ensures the
data integrity once the original MD5 stamp has been generated. 

Lastly, one needs to check that a random memory corruption has not affected the initial creation process, after the value was
computed but before the original MD5 checksum was generated. The only sensible way to guard against this problem
is to perform a new independent certification (count and proofs) of Theorem~\ref{thm:certif} of all
the data written on the disk, which we did using our applications \texttt{hypCount}, \texttt{hypProve}, \texttt{misCount} and \texttt{misProve}. To encourage independent verifications, the code of those applications has been kept as
minimalistic as possible.

The only uncertainty left is a data corruption that could happen after this second certification
but would remain undetected by the MD5 checksums. As MD5 is a 128 bit cryptographic hash function\footnote{Granted
that our database is not subject to a malicious attack that would actively seek a vulnerability.},
the probability of such an event is of order $2^{-128}\simeq 3\times 10^{-39}$.

\subsection{Quick single-core splitting of polynomials up to degree $10^9$}

\begin{figure}[t]
\captionsetup{width=.95\linewidth}
\begin{center}
\includegraphics[width=.95\textwidth]{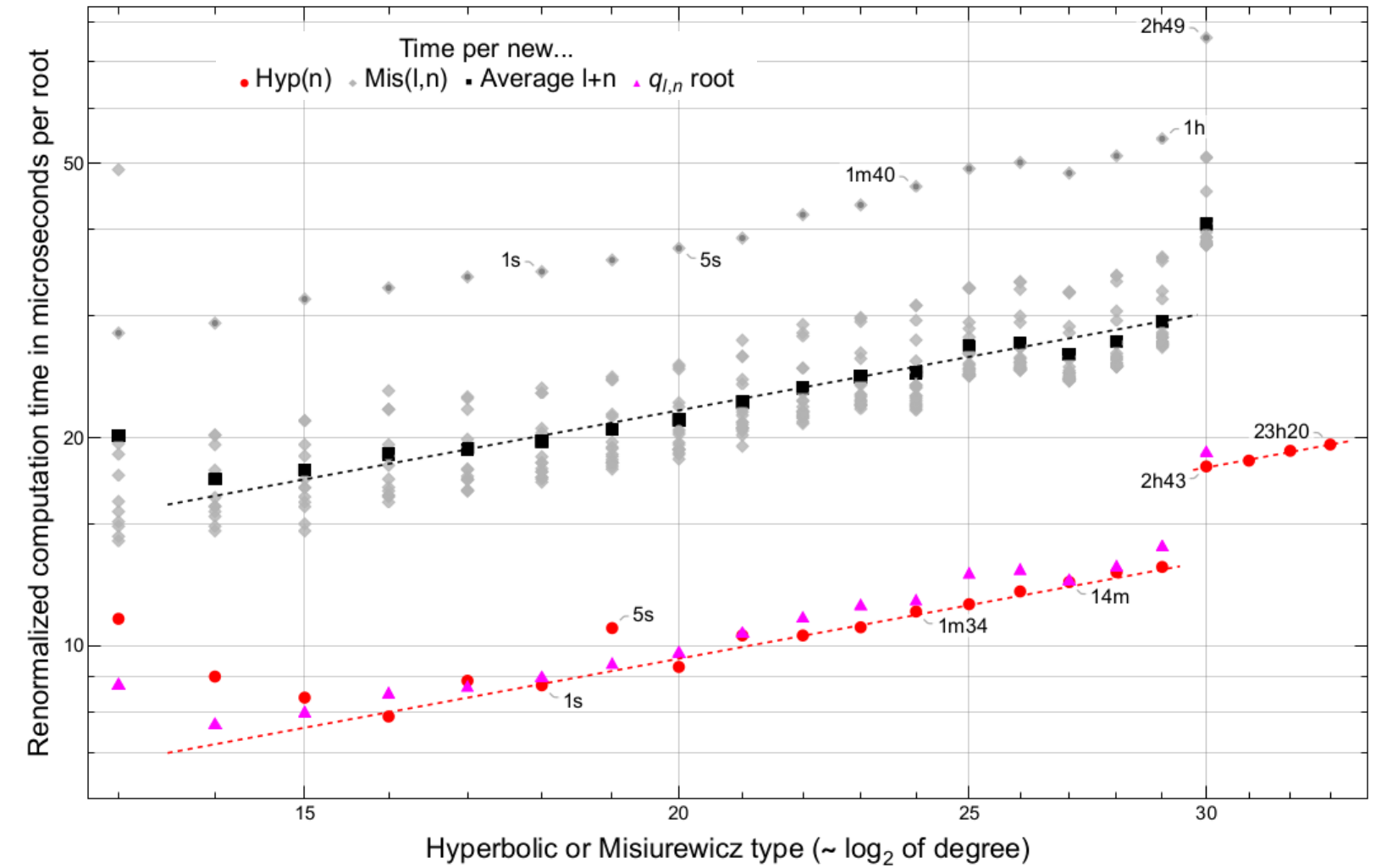}
\caption{\label{fig:quickTimes}\small
Computation time, in microseconds, per new parameter $\hyp(n)$ computed with \texttt{hypQuick} (red dots)
or $\mis(\ell,n)$ with \texttt{misQuick} (gray lozenges).
The black square give the average time at a given $\ell+n$. The slowest subtype corresponds to $q_{\ell,2}$.
Labels indicate the total splitting time of $p_n$ or $q_{\ell,2}$ on a single CPU.
The magenta triangles represent the average computation time per root of $q_{\ell,n}$,
regardless of wether the root is a hyperbolic divisor or not.
The dashed lines correspond to a slope $O(n^{0.8})$ or $O((\ell+n)^{0.8})$, and the overall fit suggests that the algorithm
behaves, in practice, as $O(d(\log d)^{0.8})$.
}
\end{center}
\end{figure}

Splitting the polynomials $p_n$ and $q_{\ell,n}$ for the first values of $n$ is a standard computational challenge.
Using the standard algorithms of Section~\ref{par:roots}, splitting $p_{25}$ (degree 1.7 million) is a task that used
to lie at the computational frontier and required days of core usage.
Using our new algorithm based on level lines (presented in Section~\ref{par:split}),
the polynomial $p_{25}$ can now be split in only a few minutes on a single core of consumer hardware.

As illustrated of Figure~\ref{fig:quickTimes}, we push the day-core limit back to $p_{33}$ (degree 4.3 billion).
As explained in \S\ref{par:HardwareLimit} below, this degree is the highest one that can be handled
using hardware arithmetic.

\subsubsection{Implementation of the level line algorithm}

In our implementation~\citelib{MLib}, the application \texttt{hypQuick} splits $p_n$ and computes a listing of $\hyp(n)$.
On Figure~\ref{fig:quickTimes}, the low degree scatter for processes that execute in less than 5 seconds
is driven by memory loading and is not significant.
The 40\% jump in the average processing time per root at~$p_{30}$ is due to the fact that we double
the number of points on the initial level line when $n\geq 30$. The code of this application is concise
and is largely independent of the rest of the library. It relies on the \texttt{pset} structure (see \S\ref{par:dataStruct})
to collect and sort the roots and prevent duplicates. The functions that implement the Newton's method
are common to our whole library.

\medskip
Let us comment on our implementation for the splitting of $p_n$.
The endpoints of the level curve~$|p_n(z)|=5$ along the real line are found by dichotomy.
The discrete level curve $\lev_{5, 2^{n-1}}(p_n)$ defined by~\eqref{eq:defDiscreteLev}
is computed on the fly using Proposition~\ref{prop:computeLevelLine} with $M=8$ points per turn
if $n\leq 29$ or $M=16$ points per turn if $n\geq 30$, \ie one computes
solutions of $p_n(z_k)=5 e^{2i k \pi/M}$,
each $z_k$ acting as the starting point for the Newton iterations towards $z_{k+1}$  (see Fig.~\ref{fig:MVSplitAlgo}).
For each $z_k$ such that $k\equiv 0 \mod 2$ (if $n\leq 29$) or $k\equiv 0 \mod 4$ (if $n\geq 30$), a Newton descend attempts to find a new root of $p_n$.
To prevent divergent trajectories, the number of iterations is capped by $O(\log d)$.
The new root is then searched in $O(\log d)$ within a \texttt{pset} structure that contains all the roots of the
known divisors of $p_n$ given by Theorem~\ref{thm:countingHyp}.
If the root does not belong to a divisor, it is added,  in constant time, into a \texttt{pset} structure that collects all new roots.
Then the computation of the discrete level curve restarts and the process continues.
Overall, the complexity of splitting $p_n$ and identifying $\hyp(n)$ using this implementation is $O(d\log d)$ with $d=2^{n-1}$,
which explains the record times shown on Figure~\ref{fig:quickTimes}.
All computations are performed using~\texttt{FP80} hardware arithmetic.
The memory requirement is $d+O(\sqrt{d})$ to store both the divisors and the new roots.
The application finally outputs the listing of the set $\hyp(n)$ in \texttt{CSV} format.

\medskip
We have also implemented two applications that split $q_{\ell,n}$ for $\ell+n\leq 30$.
The application  \texttt{misQuick} uses the original polynomials $q_{\ell,n}$, which have high-multiplicity roots,
as described in Theorem~\ref{thm:fact_mis}.
Figure~\ref{fig:quickTimes} synthesizes our benchmarks.
The computation time appears to obey a general rule $O(d(\log d)^{0.8})$ per new parameter, regardless of
multiplicity. The constant factor in front of this experimental law depends on the sub-family of polynomials
on which the benchmark is run and the parameters (like the ratio of starting points per root).
In theory, a $O(d \log d)$ scaling law should have been observed.
Let us point out that the renormalized computation time per root (regardless of wether it is a Misiurewicz type or not)
is equivalent to that of hyperbolic polynomials of the same degree. This means that high-multiplicities have,
at least in that case, a minimal effect on the efficiency of our algorithm.
To get rid of multiplicities, the application \texttt{misSimpleQuick} uses the simplified polynomials 
$s_{\ell,n}=p_{\ell+n-1}+p_{\ell-1}$ instead of $q_{\ell,n}$. This polynomial~\eqref{eq:polySimple}
plays a central role in the proof of
Theorem~\ref{thm:fact_mis} (see equation~\eqref{eq:polyMisSimple} in Appendix~\ref{par:factorization}),
it has simple roots that contain $\mis(\ell,n)$ and is a lot simpler to compute than the fully reduced
polynomial~$m_{\ell,n}$ given by~\eqref{eq:mis_red}.

\subsubsection{Reaching the limit of hardware arithmetic}\label{par:HardwareLimit}

The different \texttt{C} standard types of finite precision arithmetic are recalled in Table~\ref{table:FPxx}.
For example, \texttt{FP80} arithmetic is the highest precision available on common hardware and offers 64 significand bits;
as such, it is suitable to represent the list of all roots of a polynomial only when the minimal root separation
exceeds $2^{-64}\simeq 5\times 10^{-20}$.
Moreover, a margin of at least a few bits is necessary for the Newton dynamics to be meaningful and
converge to those roots.

\begin{table}[H]
\begin{center}\small
\begin{tabular}{ccccccccc}
\hline
Norm & Standard C 	& Java &  Sign & Exponent & Significand & RAM & Disk\\\hline
FP32	& \verb|float|		& \verb|float|	& 1 bit 	& 8 bit	& 1+23 bit	 & 4B & 4B \\\hline
FP64	& \verb|double|		& \verb|double|	& 1 bit 	& 11 bit	& 1+52 bit  & 8B & 8B	\\\hline
FP80 	&\multirow{2}{*}{\texttt{long double}}	 & -	& 1 bit 	& 15 bit	& 64 bit & 12-16B & 10B \\\cline{4-8}
FP128	& & -			& 1 bit 	& 15 bit	& 1+112 bit & 16B & 16B\\
\hline
\end{tabular}
\caption{\label{table:FPxx}\small
Standard types of finite precision arithmetic.
}
\end{center}
\end{table}

Let us consider the two left-most real roots $c_1=-2+\varepsilon_1$ and $c_2=-2+\varepsilon_2$
of $p_n$, with $0<\varepsilon_1<\varepsilon_2$. For large $n$, the corresponding dynamics (see Fig.~\ref{fig:leftmostRoot})
linger near 2 for most of the trajectory and separate from one another only at the last step,
with
\[
p_{n-1}(c_1) = \sqrt{2-\varepsilon_1} \simeq \sqrt{2} \left(1-\frac{\varepsilon_1}{4}\right)
\quad\text{and}\quad
p_{n-1}(c_2)= -\sqrt{2-\varepsilon_2} \simeq -\sqrt{2}\left(1-\frac{\varepsilon_2}{4}\right),
\]
hence the rough estimate:
\begin{equation}\label{eq:approxLeftRoots}
2\sqrt{2} \simeq |p_{n-1}(c_1)-p_{n-1}(c_2)| \simeq |p_{n-1}'(-2)| |c_1-c_2|.
\end{equation}

The computation of the derivative is classical and is based on the connection between the parameter space
and that of dynamics.
Indeed, as $f_c'(z)= 2z$ and
\[
(f_c^k)'(z)=\prod_{\ell=0}^{k-1} f_c'(f_c^\ell(z)) = 2^k z f_c(z)\ldots f_c^{k-1}(z),
\]
the recursive definition of $p_n$ gives
\[
p_{n}'(c)  = 1+\sum_{\ell=1}^{n-1} (f_c^\ell)'(p_{n-\ell}(c)) 
= 1+\sum_{\ell=1}^{n-1} 2^\ell p_{n-\ell}(c) p_{n-\ell+1}(c) \ldots p_{n-1}(c) 
\]
thus
\begin{equation}\label{eq:DerParamDyna}
\frac{p_{n}'(c)}{(f_c^{n-1})'(c)} = 
1+\sum_{\ell=0}^{n-2} \frac{2^\ell p_{n-\ell}(c) p_{n-\ell+1}(c) \ldots p_{n-1}(c) }{
2^{n-1} p_1(c) p_2(c)\ldots p_{n-1}(c) }
= 1+\sum_{k=1}^{n-1} \frac{1}{
(f_c^k)'(c) }
\end{equation}
For $c=-2$, one has $f^k_{-2}(-2)=2$ for all $k\geq1$ and $(f^k_{-2})'(-2) = -4^k$.
Substitution in~\eqref{eq:DerParamDyna} gives $p_{n}'(-2)= -\frac{4^{n}+2}{6}$
and thus, within the approximation~\eqref{eq:approxLeftRoots}:
\begin{equation}
|c_1-c_2|  \propto 4^{-n}. 
\end{equation}

\begin{figure}[H]
\captionsetup{width=\linewidth}
\begin{center}
\includegraphics[width=.8\textwidth]{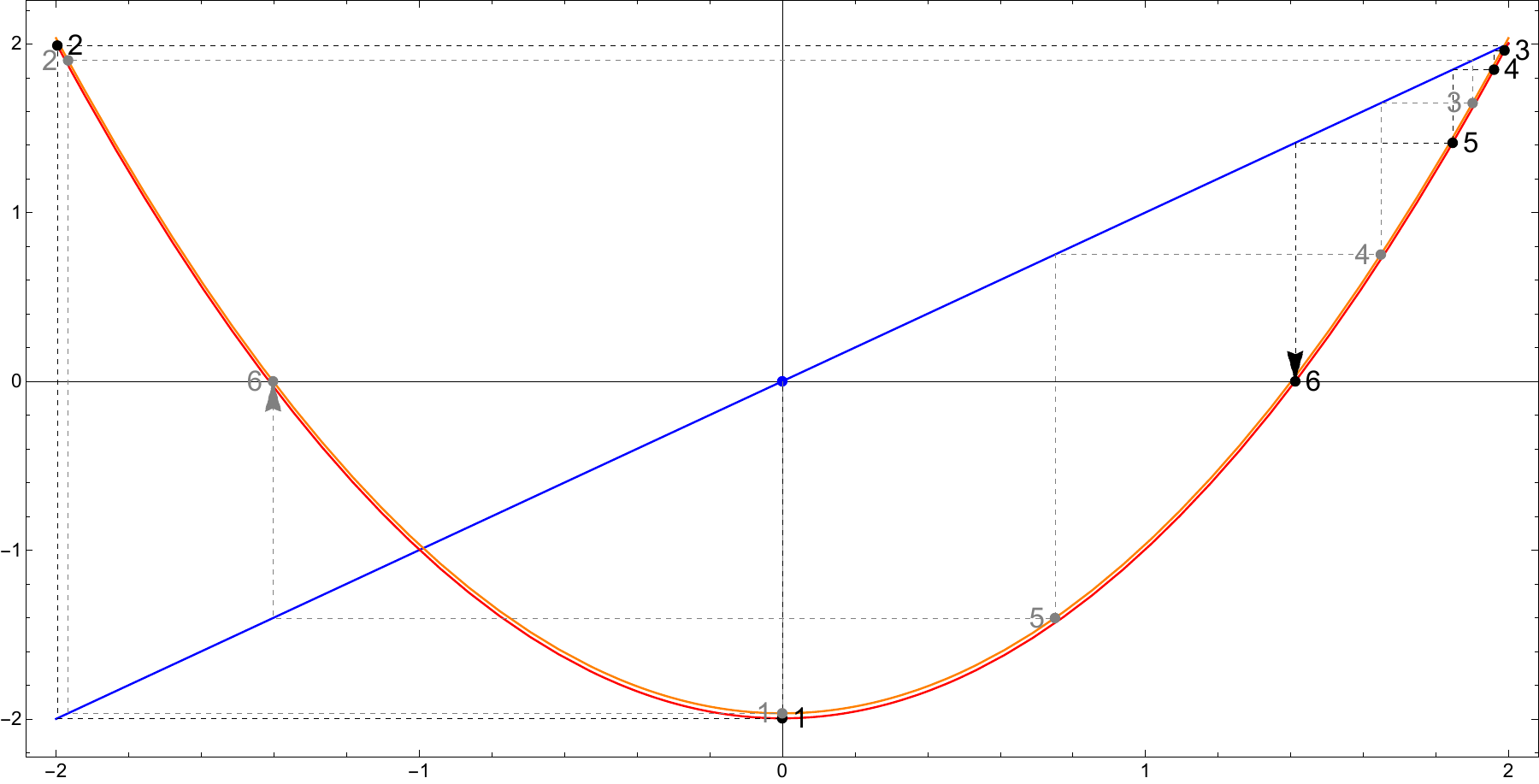}
\caption{\label{fig:leftmostRoot}\small
Dynamics associated with the left-most real roots of $p_n$ (here with $n=6)$.
}
\end{center}
\end{figure}

\begin{figure}[H]
\captionsetup{width=\linewidth}
\begin{center}
\includegraphics[width=.83\textwidth]{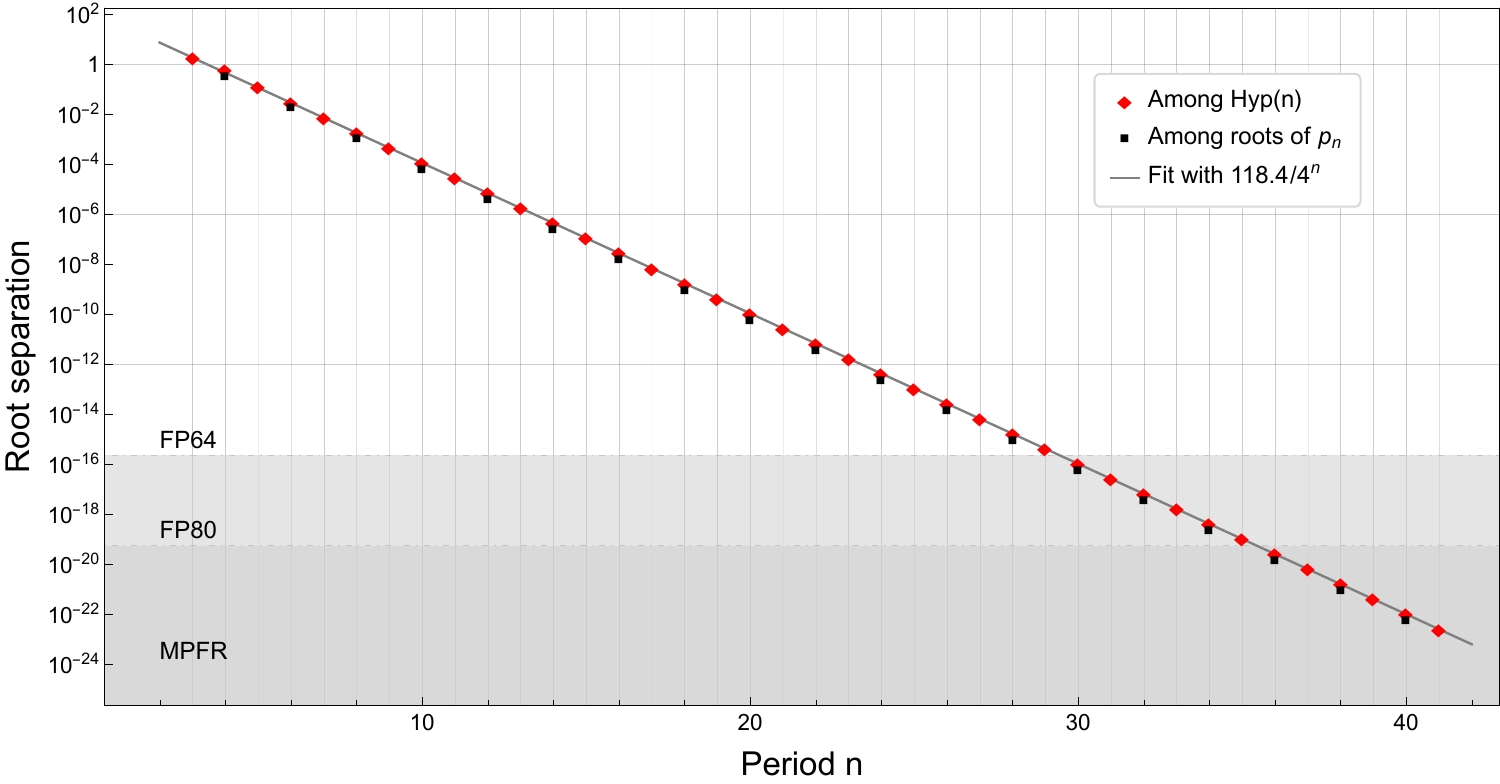}
\caption{\label{fig:rootSeparation}\small
Minimal distance among hyperbolic centers $\hyp(n)$ and among the roots of $p_n$  (which differs only for odd $n$).
The shaded regions  highlight the resolution of hardware finite precision arithmetics \texttt{FP64} and  \texttt{FP80}
and the region where a higher precision is required.
}
\end{center}
\end{figure}

\begin{figure}[H]
\captionsetup{width=\linewidth}
\begin{center}
\includegraphics[width=.82\textwidth]{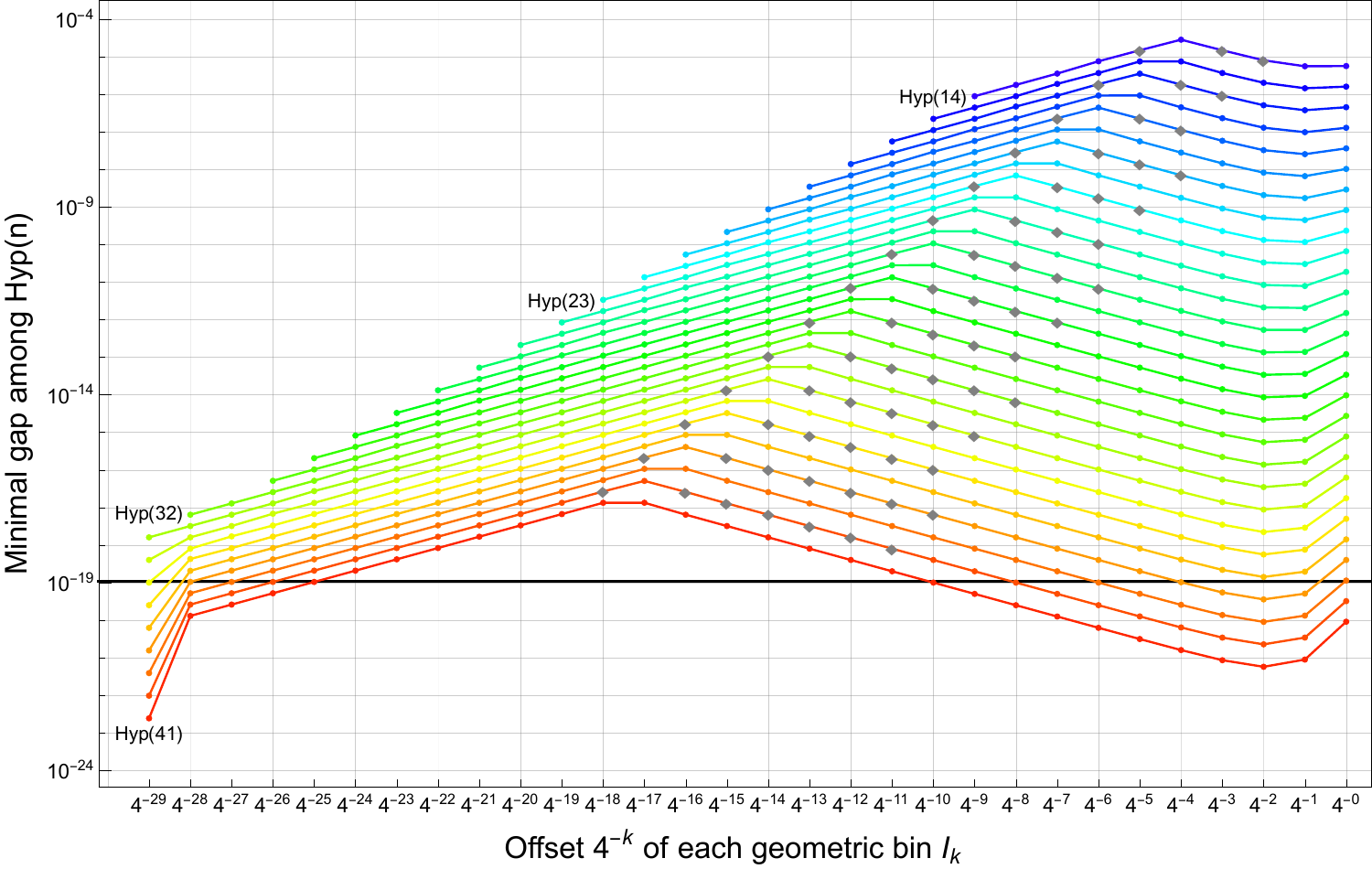}
\caption{\label{fig:separationHistogram}\small
Histogram of the minimal distance among $\hyp(n) \cap I_k$ with geometric bins $I_k=(-2+4^{-k-1}, -2+4^{-k}]\times\R$
illustrating that the closest gap happens near $c=-2$. The gray data points corresponds to bins where
the minimal distance among all roots of $p_n$ is strictly smaller (close proximity of a divisor).
}
\end{center}
\end{figure}

\begin{figure}[H]
\captionsetup{width=\linewidth}
\begin{center}
\includegraphics[width=.82\textwidth]{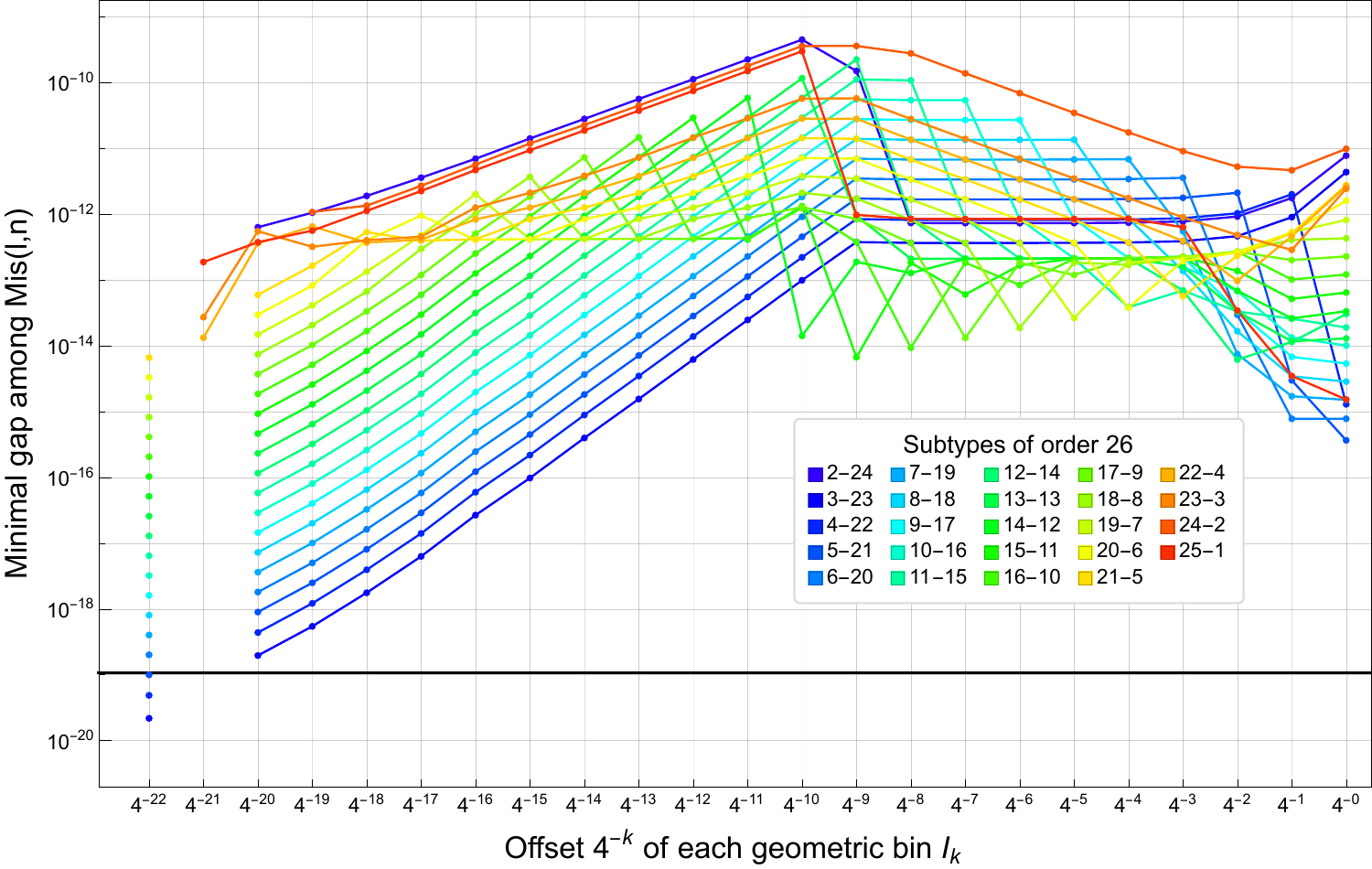}
\caption{\label{fig:separationHistogramMis}\small
Histogram of the minimal distance among $\mis(\ell,n) \cap I_k$ for $\ell+n=26$,
which is the first Misiurewicz order that cannot be integrally represented with \texttt{FP80} arithmetic.
The separation within $\mis(3,23)$ and $\mis(4,22)$ drops below the limit of the representation.
}
\end{center}
\end{figure}

The actual root separation computed on our high-resolution dataset is illustrated on
Figures~\ref{fig:rootSeparation}, \ref{fig:separationHistogram} and \ref{fig:separationHistogramMis}.
Within $\hyp(n)$, one can check numerically that the two left-most real roots are, indeed, the closest points.
The experimental scaling law matches the predicted behavior (relative error smaller than $0.1\%$ when $n\geq9$):
\begin{equation}
\min_{\substack{c,c'\in \hyp(n)\\ c\neq c'}}|c-c'|\sim \frac{118.4}{4^n}\cdotp
\end{equation}
For odd~$n$, both roots lie at the center of a primitive component of period~$n$.
For even~$n$, the minimal distance among roots of $p_n$ is slightly smaller than among $\hyp(n)$, due to the presence of 
a closer pair involving a root of a divisor (see Fig.~\ref{fig:separationHistogram}): one root is the center of
a primitive component of period~$n/2$ and the other one is the center of a satellite of period~$n$.

\bigskip
In the $\mis(\ell,n)$ family, the limitation of hardware arithmetic is first hit for the subtypes $\mis(3,23)$ and $\mis(4,22)$,
as illustrated on Figure~\ref{fig:separationHistogramMis}.
Both sets contain, at the leftmost end of $\M$, two distincts real parameters separated by less than~$4.86 \times 10^{-20} < 2^{-64}$.
They cannot be separated in \texttt{FP80} arithmetic.
More generally, we have observed that all sets $\mis(\ell,n)$ with $26\leq \ell+n\leq 32$
and $3\leq \ell\leq 3(\ell+n-25)+1$ contain, near the left tip,
parameters that cannot be represented in \texttt{FP80} arithmetic.
Obviously, our app \texttt{misQuick} is subject to those limitations and reports the missing roots, which is normal behavior.

However, to offer a user friendly interface, the apps \texttt{misQuick} and \texttt{misSimpleQuick} provide an option
to refine all roots near the real axis and left of $-2+4^{-16}$ using high-precision \texttt{MPFR} numbers.
This option does not sacrifice much of the performances and allows us to compute a complete list of all $\mis(\ell,n)$
parameters of type $\ell+n\leq 30$ in the time-frame illustrated on Figure~\ref{fig:quickTimes}.

When $\ell+n=30$, a second region (near $\Re c = -1$) contains close complex pairs (see Fig.~\ref{fig:rootsHardToFind})
that are separated by less than $2^{-62}$. Computing types of order $\ell+n\geq 31$ would require introducing
multiple high-precision exceptions. At that point, one should just switch to our HPC implementation described in Section~\ref{par:ImplementationHPC}.

\bigskip

Those numerical observations establish that, in the family of all hyperbolic centers
and Misiurewicz parameters, the roots of  $p_{33}$ and $m_{\ell,n}$ with $\ell+n=30$
constitute the final frontier of what can be computed using \texttt{FP80} hardware
arithmetic. As illustrated on Figure~\ref{fig:quickTimes}, our new algorithm based on level-lines
can reach this frontier in just a bit less than one day-core for the hyperbolic parameters and in about 3 hours
for each pre-periodic type (half that for \texttt{misSimpleQuick}).

\bigskip

\begin{remark}
The upper exponent limit of \texttt{FP80} numbers is $2^{2^{14}}\simeq 1.18\times 10^{4932}$.
In \texttt{hypQuick}, we deal with polynomials
of degree up to $2^{32}$. Let us underline that
\[
|z|^{2^{32}} \leq 1.18\times  10^{4932} \quad\Leftrightarrow\quad |z|\leq R_{\operatorname{\tt FP80}}
\quad\text{with}\quad R_{\operatorname{\tt FP80}}\simeq 1 + 2.64\times 10^{-6}
\]
and that $\mathcal{M}\cap \disk(0,R_{\operatorname{\tt FP80}})^c\neq\emptyset$.
To compute accurately one Newton step $p_n(z)/p_n'(z)$ when we detect that both the numerator and denominator
are so large that each will exceed the exponent limit (which happens in the first steps),
we simplify a common large factor, without loss of precision.
\end{remark}

\subsubsection{A-posteriori certification of the \texttt{FP80} results}

There is no realistic hope of certifying \texttt{FP80} computations within their own framework.
The operations on \texttt{long double} are extremely hardware dependent and we have no real garanties that
a given implementation will perform properly, in any circumstances, up to the last bit.
Implementing disk arithmetic is possible and can provide
a reasonable amount of confidence and is therefore included in our library.
However, using disks based on hardware arithmetic still
leaves an unacceptable room for doubt\footnote{A single disk radius mistakenly rounded the wrong way
breaks the logic chain of who contains whom.}
that only \texttt{MPFR} (which implements guarantied
rounding directives) can waive.

\medskip
For all periods 3 to 33, we did compare the listing of $\hyp(n)$ obtained
in \texttt{FP80} arithmetic using \texttt{hypQuick} to the certified listing obtained
independently with high-precision \texttt{MPFR} and certified disk arithmetic (see \S\ref{par:certif} and \S\ref{par:certif2}).
The maximum deviation between two corresponding lists does not exceed $5.24\times 10^{-19}$ (reached, in
our case, for $p_{24}$).
In particular, the lists of roots are identical up to a precision of $10^{-18}$.
This a-posteriori check constitutes the best possible certification of our \texttt{FP80} results.

Similarly, the a-posteriori certification of $\mis(\ell,n)$ for $\ell+n\leq 25$ reports a maximum
deviation of $3.25\times 10^{-19}$ between the lists obtained in hardware arithmetic and
those obtained with \texttt{MPFR}. Subtypes of higher order also pass the certification,
with an obvious exception for the parameters that cannot be represented in hardware
arithmetic and are thus out of the reach of \texttt{misQuick} (see Section~\ref{par:HardwareLimit}).
Of course, they can be certified only if one activates the high-precision option for parameters left of $-2+4^{-16}$.

\subsection{Our HPC approach for splitting a tera-polynomial}\label{par:ImplementationHPC}

Scaling up our algorithm from the splitting of $p_{33}$ to that of $p_{41}$ requires the resources
from a HPC center. The two key ingredients are massively parallel operations and the ability to switch
on the fly between hardware arithmetic, and arithmetic in arbitrary precision.
The splitting process is decomposed in multiple stages.

\subsubsection{Overview of the splitting process}

The first stage, called is a massively parallel raw search that involves~$J=2^{n-27}$ tasks
(respectively $J=2^{n+\ell-27}$ for $q_{n,\ell}$) that are
independent of each other. Each task consists in finding roots in a certain sub-region of the Mandelbrot set. More
precisely, the computation of the discrete level line $\lev_{5, 2^{n-1}}(p_n)$ is split in $J$ pieces of equal cardinality.
We choose $2^{n-1}/J = 2^{27}$ to ensure that the resulting \texttt{nset} files of roots would take about 1.1GB each on disk.
Each task computes a part of the finest level line and finds all the roots that can be reached from it using the Newton map.
In the library, the corresponding apps are \texttt{hypRaw} and \texttt{misRaw}.
The alternative \texttt{misSimpleRaw} uses the simplified polynomials $s_{\ell,n}=p_{\ell+n-1}+p_{\ell-1}$
defined by~\eqref{eq:polySimple}
instead of $q_{\ell,n}$, which eliminates most hyperbolic divisors and ensures that all roots are simple.

\medskip

The superiority of our level-line algorithm is illustrated on Figures~\ref{fig:meshRefine} and~\ref{fig:rawJobs}:
sections of equal cardinality of $\lev_{\lambda, N}$
span physical regions of extremely varied size. The level-line is a naturally self-refining mesh in the sense that the mesh is,
by construction, denser in regions where $\arg P(z)$ cycles more rapidly, which indicates an intrication of the bassins of attraction
for the Newton flow and is thus likely to occur for the Newton map too.
As iso-angles~\eqref{eq:NewtonFlow} and external rays coincide asymptotically at infinity,
it can be expected that those regions still corellate with a higher density of external rays,
and thus capture faithfully, as the ray land, the harmonic measure of $\M^c$ and thus the roots of $p_n$ and $q_{\ell,n}$
(see Fig.~\ref{fig:harmonic} and Section~\ref{par:harmonicMeasure}).

The key to the parallelization is the possibility to compute recursively a point with a given phase on a level line.
One uses discrete iso-angle trajectories~\eqref{eq:NewtonFlowLyapunov}, which are good approximations of
the external rays (Figure~\ref{fig:externalRays}).
A point on $\lev_{\lambda',N}(p_n)$ with a given phase can be obtained from a corresponding point
in $\lev_{\lambda,N/2}(p_{n-1})$ with $\lambda'=\lambda^2$ using only a few Newton steps.
If $\lambda$ is large enough, one can be confident that no skip modulo $2\pi$ has occurred so the phase is correct.
Again by the Newton method, we obtain $\lev_{\lambda,N}(p_n)$ from $\lev_{\lambda',N}(p_n)$.
This process is explained in details in Section~\ref{par:levAlgoMandelSpecifics}.
Use the app \texttt{levelSets} to initiate the low-resolution level sets that will be used by each parallel task.

\medskip

The second stage consist in merging the roots found by all the parallel tasks.
Sorting tera-bytes of data split in thousands of files of about one gigabyte each and deleting the roots found multiple times
appears, at first, as a substantial computational challenge. However, as explained in Section~\ref{par:dataStruct},
we used an appropriate, custom made, data structure that allows for logarithmic search and insertion times and
prohibits duplicates. Overall, the time requirements for the second stage turned out to be negligible
(see Table~\ref{table:overallTime}).
The corresponding apps are \texttt{hypRawCount} and \texttt{misRawCount}. They can also generate \texttt{maps},
which are low-resolution portraits of the Mandelbrot set that count how many roots of a given
type end up in a given pixel (see Fig.~\ref{fig:harmonic}).

\begin{figure}[H]
\captionsetup{width=.9\linewidth}
\begin{center}
\includegraphics[width=0.8\textwidth]{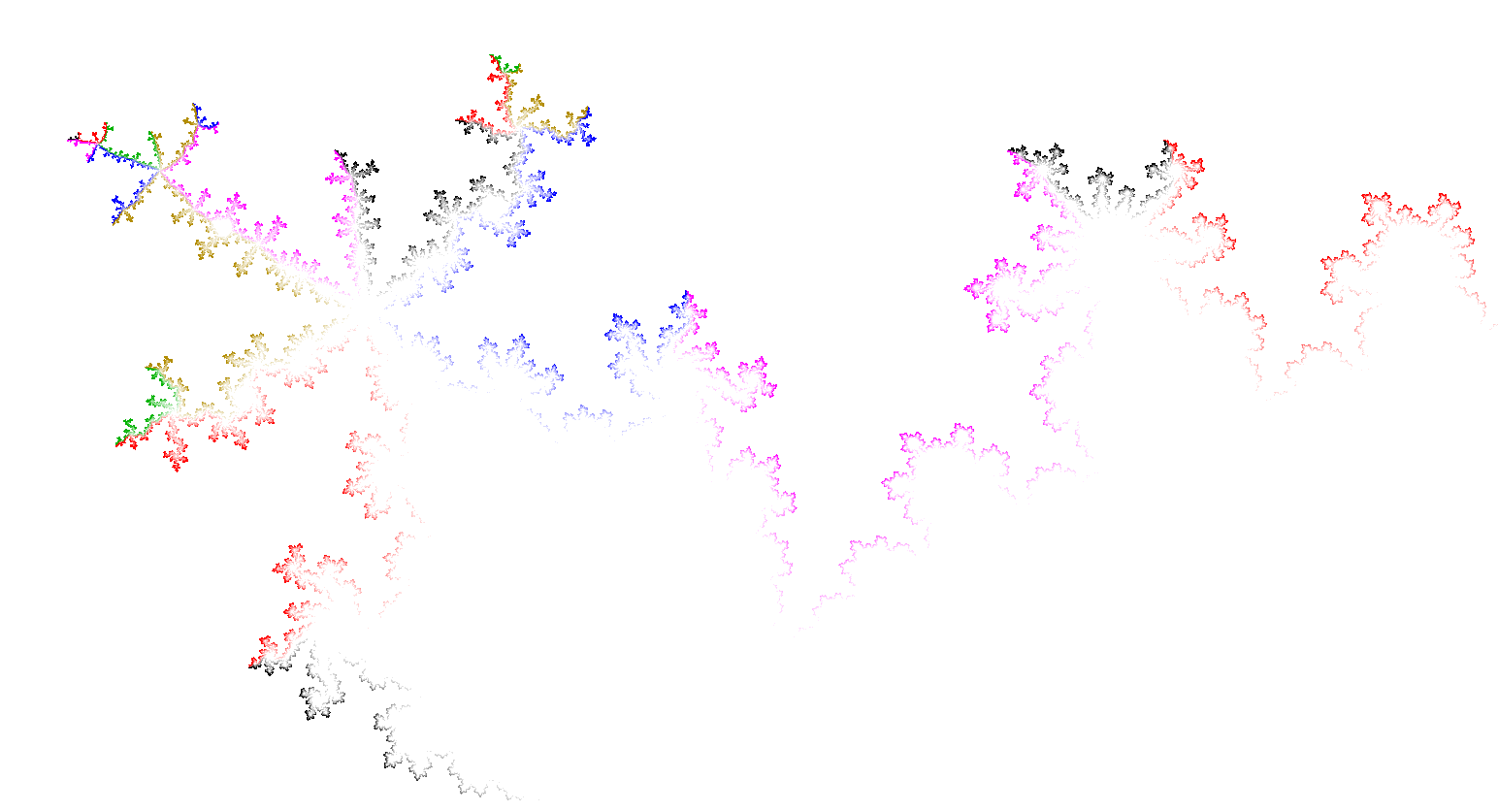}
\caption{\label{fig:rawJobs}\small
Zoom on 18 consecutive raw jobs for $p_{35}$ : the roots found by each parallel task are colored differently from
the next one. Note the extreme spatial variability between tasks, even though each one starts from a piece of
the discrete level line that contains $2^{27}$ points.
}
\end{center}
\end{figure}

\begin{figure}[H]
\captionsetup{width=.95\linewidth}
\begin{center}
\includegraphics[width=.7\textwidth]{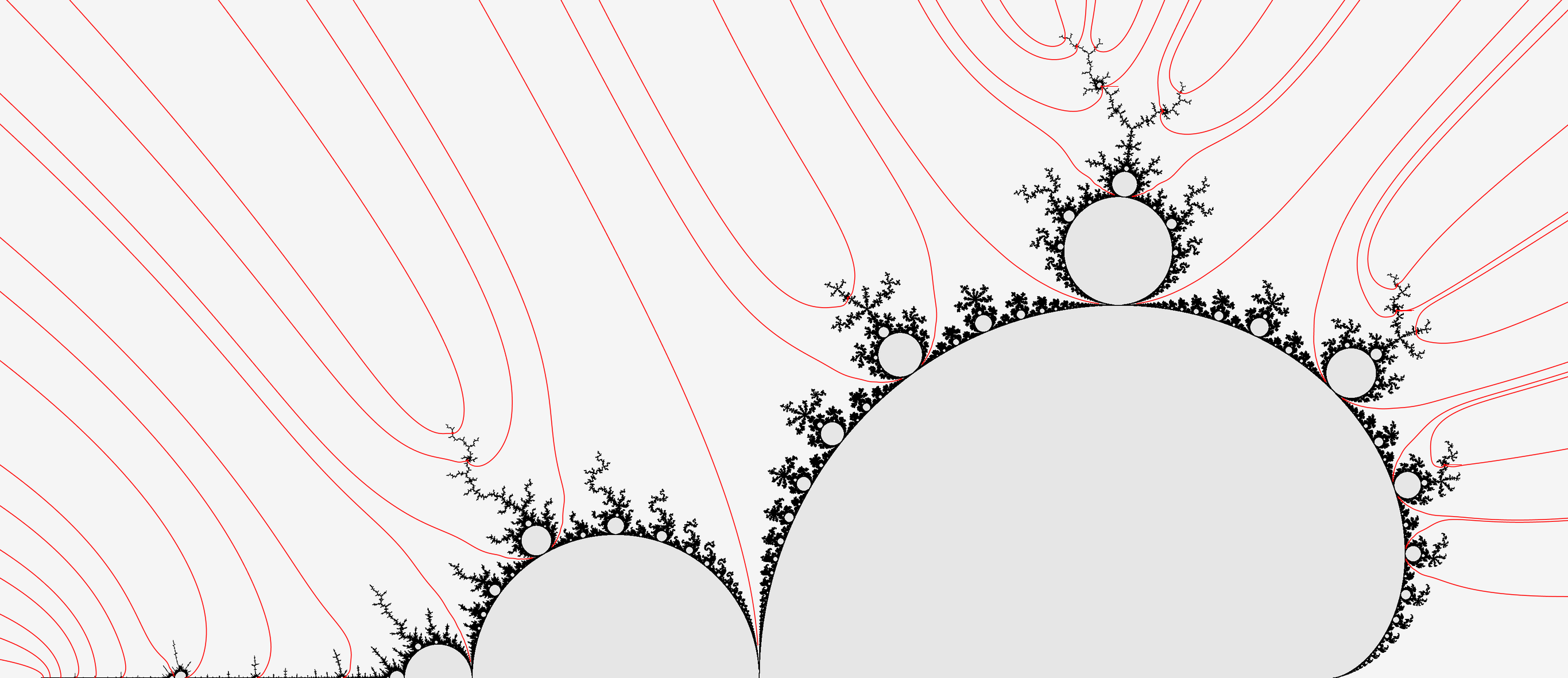}
\caption{\label{fig:externalRays}\small
External rays landing at the root of hyperbolic components of periods up to 6.
}
\end{center}
\end{figure}

\medskip

The third stage is the a-posteriori certification of the data, as explained in Section~\ref{par:certif2}.
It is split in two sub-stages.
First, we read the files in batch and count that we have all roots in a strictly increasing lexicographic order.
The corresponding apps are \texttt{hypCount} and \texttt{misCount}.
For reliability reasons, this task is performed on a a single CPU and the code is as
independent as possible from the rest of the library.
Next (and last), the certification involves redoing all the proofs in high-precision disk arithmetic using
as many CPUs as possible, which is the role of the apps \texttt{hypProve} and \texttt{misProve}.

\medskip
In practice, the main difficulty for the certification of $c\in\hyp(n)$ is a close transit of $p_k(c)$ near zero for some $k\vert n$.
This phenomenon can hinder the certification of a root of $p_n$ and prevents essentially any certification attempt made
with interval arithmetic. On the other hand, disk arithmetic (see Section~\ref{par:controlNum})
is essentially immune to this problem.
Once the proper certification radius (see Section~\ref{par:certif2}) was identified, the process went on flawlessly.

For $\mis(\ell,n)$, we encountered difficulties in the raw search that kept missing pairs of particularly close parameters near 
the tips of some antennas (see Fig.~\ref{fig:rootsHardToFind}).
However, adjusting the parameters in Theorem~\ref{thm:certif} solved the issue.

\medskip
The overall computation times of all stages are given in Table~\ref{table:overallTime}.
As the certification has an overall $O(d \log^2 d)$ complexity, it can be used as a reference time\footnote{$\log^2 d$ as
 we check that new parameters are not roots of divisors.}.
The ratio of 2.4 (for $p_n$) or 9.4 (for $q_{\ell,n}$) between the time necessary for the raw search and
that necessary for the certification attests that our algorithm behaves in practice as $O(d \log^2 d)$
with a small constant,
at least for the $p_n$ and $q_{\ell,n}$ families.
The higher raw search time for Misiurewicz parameters is mostly driven by the fact that,
asymptotically, half the roots of $q_{\ell,n}$ are high-multiplicity hyperbolic divisors that are discarded
in the end. Note that we did not used the simplified polynomials $m_{\ell,n}$, nor $s_{\ell,n}=p_{\ell+n-1}+p_{\ell-1}$.
We can therefore claim that, in practice, the presence of high multiplicities roots slows down the search time with our
level-line algorithm by a mere factor 1.95.

\begin{table}[H]
\captionsetup{width=.95\linewidth}
\begin{center}
\begin{tabular}{c||c|c||c|c||c|c}\cline{2-7}
\multicolumn{1}{c}{} & \multicolumn{2}{c||}{Root finding} & \multicolumn{2}{c||}{Certification} & \multirow{2}{*}{Total} & \small Find/certif. \\\cline{2-5}
\multicolumn{1}{c}{} & \small Parallel  jobs & \small Merge & \small Counting & \small Proofs &  & \small ratio \\\hline
\multirow{3}{*}{\parbox{11ex}{\centering\small Hyperbolic $p_{n,\enspace n\leq 41}$} } 
& 26.2 y 
& 16.1 d 
& 1.7 d 
& 11 y 
& \multirow{2}{*}{37.2 y}
& \multirow{3}{*}{2.4}\\
& 229 kh
&  386 h
&  41 h
&  96.4 kh
& \multirow{2}{*}{\bf $\sim$326 kh}\\ 
& 70.3\%
& 0.1\%
& 0.01\%
& 29.6\%
&
\\\hline
\multirow{3}{*}{\parbox{11ex}{\centering\small Misiurewicz $q_{\ell,n,\enspace\ell+n\leq 35}$} } 
& 40.9 y 
& 9.1 d 
& 1.1 d 
& 4.4 y 
& \multirow{2}{*}{45.3 y} 
& \multirow{3}{*}{9.4}\\ 
& 358 kh
& 218 h
& 26 h
& 38.2 kh 
& \multirow{2}{*}{\bf  $\sim$397 kh}\\ 
& 89.4\%
& 0.05\%
& 0.006\%
& 10.6\%
&
\\\hline
\end{tabular}
\caption{\label{table:overallTime}\small
Overal computation time of each main stage.
See Footnote~\ref{timecore} p.~\pageref{timecore} for a definition of time-core.
Note that for Misiurewicz polynomials, we did not use the simplified version based on $m_{\ell,n}$
in order to measure the effects of high-multiplicities (about 50\% of the roots for $q_{2,n}$)
on our level-line algorithm.
}
\end{center}
\end{table}

\noindent
\begin{minipage}{0.45\textwidth}
\begin{figure}[H]
\captionsetup{width=\linewidth}
\begin{center}
\includegraphics[width=\textwidth]{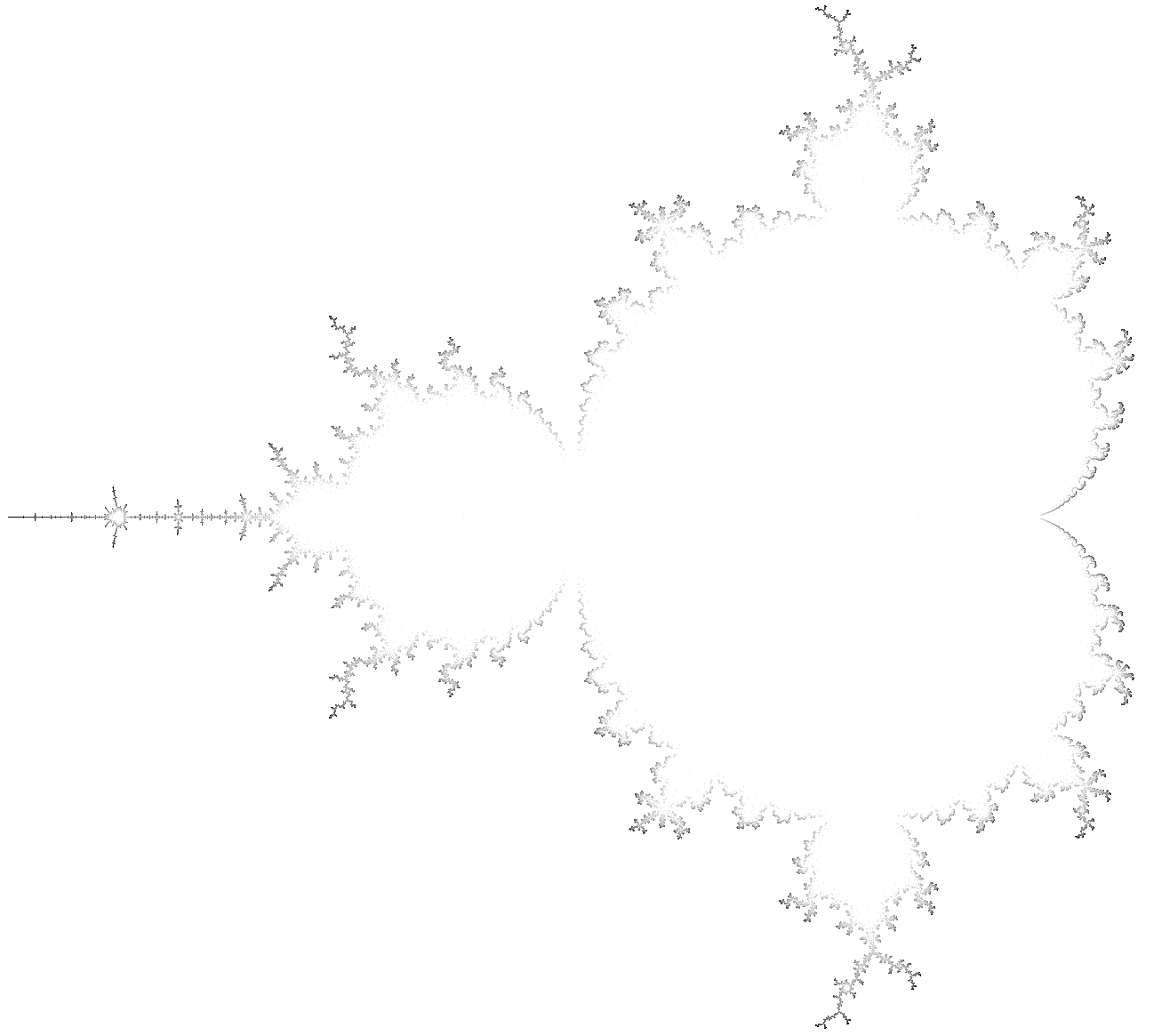}
\caption{\label{fig:harmonic}\small
The harmonic measure of $\M^c$ can be visualized by counting how many hyperbolic centers land on each pixel
and adjusting the shade accordingly (which is the principle of our \texttt{map} files). Here we represent $\hyp(n)$ for $n\leq 24$.
Note the difference with Figure~\ref{fig:mandelbrot} as the inner creeks are visited less frequently than antennaes.
}
\end{center}
\end{figure}
\end{minipage}\hfill\rule[-30ex]{0.2pt}{61ex}\hfill
\begin{minipage}{0.45\textwidth}
\begin{figure}[H]
\captionsetup{width=\linewidth}
\begin{center}
\includegraphics[width=\textwidth]{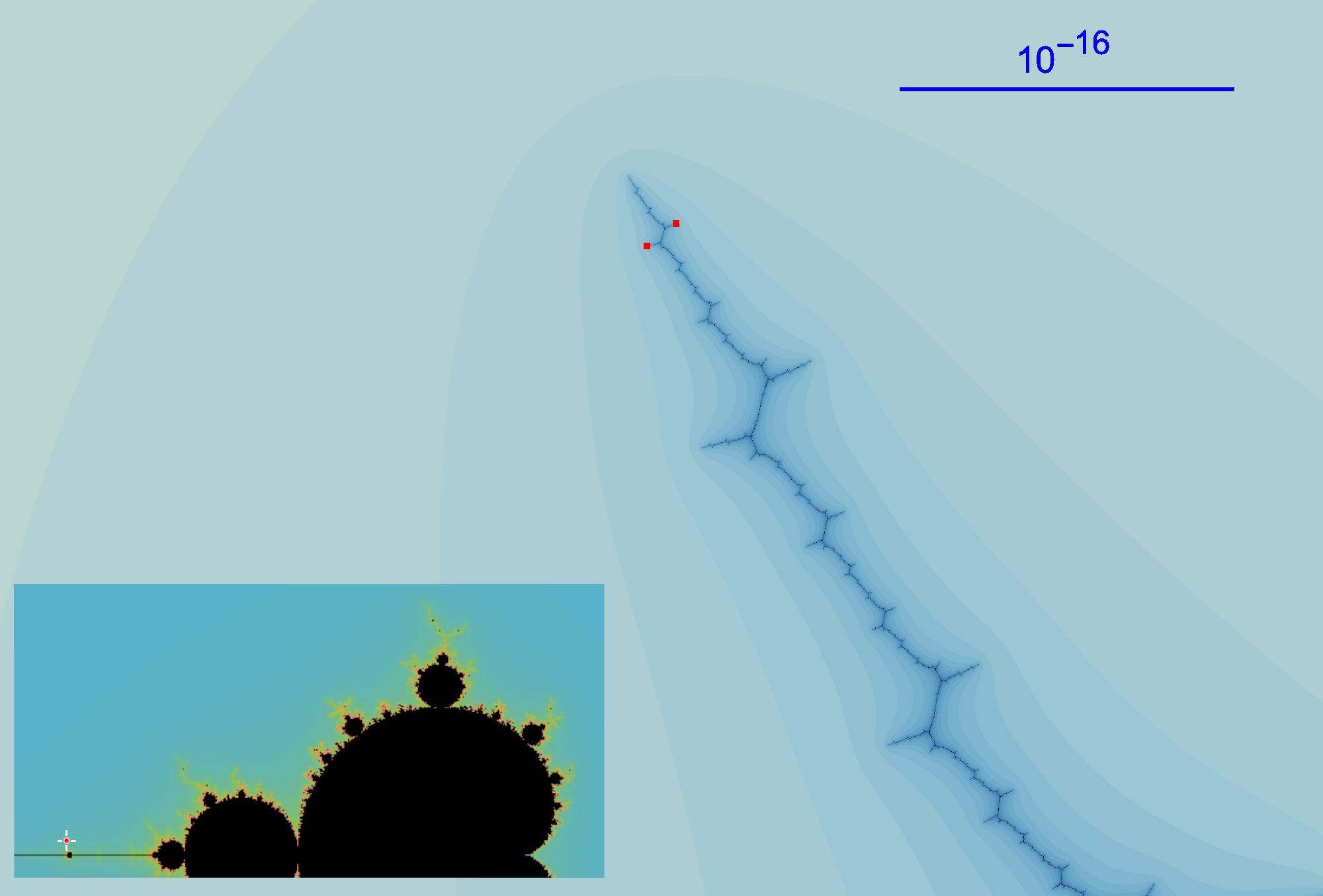}
\caption{\label{fig:rootsHardToFind}\small
Typical example of a pair of $\mis(7,27)$ parameters that are particularly easy to miss during the raw search.
}
\end{center}
\end{figure}
\end{minipage}

\subsubsection{Number of Newton steps}\label{par:BenchNewtonSteps}

In this subsection, we present some statistics regarding the practical complexity of our algorithm,
when applied to the splitting of~$p_n$  or $q_{\ell,n}$ when $n$ (resp. $\ell+n$) is large.
To normalize the statistics, we present them in terms of \textit{Newton steps per root} or in \textit{Newton steps per new parameter}.
Each computable action boils down, eventually, to a Newton steps or equivalent (evaluate
a polynomial expression, its derivative and a quotient). We have sorted the actions in categories defined
by what is actually being computed and we normalize the total number of Newton steps for each action by the degree
of the polynomial (steps per root) or by the cardinal of the set being computed (steps per new parameter).
For $\mis(\ell,n)$ sets, the steps per parameter is roughly double the number of steps per roots because,
asymptotically, only half the roots are kept in the set (see Section~\ref{par:scale}).

\medskip
\begin{table}[H]
\captionsetup{width=.9\linewidth}
\begin{center}\scalebox{.99}{\small
\begin{tabular}{c|c||ccc|cccc}\hline
\bf Period &\bf Level line & \multicolumn{3}{c|}{\bf Convergent descend} & \multicolumn{4}{c}{\bf Discarded descend} \\[1pt]
$n$ & $\lev_{5, 8d}(p_n)$ & \% &  \texttt{FP80} & \texttt{FP128} & Repeat & Overlap & Divisor & Divergent\\
\hline
28 & 51.6 N/r & 26\% &  8.8 N/p & 2.4 N/p & 73\% & -          & 0.01\% & 1\% \\
30 & 50.7 N/r & 25\% & 8.7 N/p & 2.5 N/p & 73\% & 0.07\% & 0.00\% & 1\%\\
35 & 47.9 N/r & 25\% & 8.4 N/p & 2.8 N/p & 73\% & 0.23\% & 0.00\% &1\%\\
39 & 46.9 N/r & 25\% & 8.1 N/p & 3.1 N/p & 73\% & 0.38\% & 0.00\% &1\%\\
40 & 47.2 N/r & 25\% & 7.9 N/p & 3.2 N/p & 73\% & 0.39\%  & 0.00\% &1\%\\
41 &  47.7 N/r & 25\% & 7.7 N/p & 3.4 N/p & 73\% & 0.34\% & 0.00\% &1\%\\\hline
\end{tabular}
}
\caption{\label{table:newtonSteps}\small
Count of Newton steps per new hyperbolic parameter (N/p), which, for $p_n$, is equivalent to Newton steps per root (N/r).
Intermediary periods have similar statistics.
}
\end{center}
\end{table}

\begin{table}[H]
\captionsetup{width=.9\linewidth}
\begin{center}\scalebox{0.91}{\small
\begin{tabular}{c|cc||ccc|cccc}\hline
\bf Type &\multicolumn{2}{c||}{\bf Level line} & \multicolumn{3}{c|}{\bf Convergent descend} & \multicolumn{4}{c}{\bf Discarded descend} \\[1pt]
$\ell+n$ & \multicolumn{2}{c||}{$\lev_{100, 8d}(q_{\ell,n})$} & 
\% & \texttt{FP80} & \texttt{FP128} & Repeat & Overlap & Divisor & Divergent\\
\hline
28 &  110 N/p & 56.8 N/r & 13\% &  9 N/p & 2.5 N/p & 33\% & 0.02\% & 48.1\% & 6\% \\
30 &  107 N/p & 55.4 N/r & 13\% &  8.9 N/p & 2.6 N/p & 34\% & 0.01\% & 48.1\% & 6\%\\
32 &  104 N/p & 53.9 N/r & 12\% &  8.6 N/p & 2.9 N/p & 34\% & 0.00\% & 48.1\% & 6\%\\\hline
33 &  120 N/p & 58.8 N/r & 13\% &  10.8 N/p & 4.3 N/p & 35\% & 0.01\% & 49.2\% & 4\%\\
35 &  131 N/p & 64.4 N/r & 13\% &  10.5 N/p &  4.8 N/p & 35\% & 0.02\% & 49.2\% & 4\%\\\hline
\end{tabular}
}
\caption{\label{table:newtonStepsMis}\small
Count of Newton steps per new Misiurewicz parameter (N/p) and Newton steps per root (N/r).
The normalization is substantially affected by the deflation due to roots of divisors. 
In this benchmark, we did not simplify $q_{\ell,n}$ in order to estimate the effect of high-multiplicities on our algorithm.
}
\end{center}
\end{table}

First, we compute the finest level line $\lev_{\lambda_0, Md}$
with $\lambda_0=5$, $M=8$, $d=2^{n-1}$ for $p_n$
and  $\lambda_0=100$, $M=8$ and $d=2^{\ell+n-1}$ for $q_{\ell,n}$.
According to Tables~\ref{table:newtonSteps} and~\ref{table:newtonStepsMis},
this phase appears to have a constant cost of about 50 Newton steps per root, regardless of the degree of the polynomial.
This indicate that the computation of the level line is essentially not affected by the presence of roots with high multiplicities.
The only exception occurs for $m_{\ell,n}$ and $\ell+n\geq 33$ where a significant jump in the number of steps per roots
is observed; it suggests that the level $\lambda_0=100$ may be slightly too low for those types (higher level curves
are easier to compute).
Note that, for the computation of $\lev_{\lambda_0, Md}$, we do not distinguish here between steps taken in hardware or
in high-precision arithmetic (see \S\ref{par:BenchNewtonSteps2}).

\medskip
Next, we perform Newton descends that we initiate on the subset $\lev_{\lambda_0, 4d}$ (\ie we have~4 starting points per root).
Each Newton descend can either end up as convergent towards a new parameter
(which is kept in the current \texttt{raw*.nset} file)
or it can be discarded for one the following reasons:
\begin{itemize}
\item the descend converges towards a root that was already found in the current job,
\item the descend converges towards a root that overlaps a neighboring job (early detection of duplicates),
\item the descend converges towards a root of a divisor,
\item the descend is considered divergent and is  interrupted, either because of a long-distance jump
(indicating the likely proximity of a critical point) or because it did not converge in the allotted number of steps (timeout).
\end{itemize}
Again, we observe consistently that convergent descends take about 10-12 steps, most of which can be performed entirely
using hardware arithmetic. If one assumes  that the level line $p_n(z)=10^{-5}$ is totally disconnected (\ie it has one connected component in each contracting Newton disk),
the general considerations of Section~\ref{par:split} thus ensure a practical upper bound of $\log 5\times 10^{5} \sim 13$ Newton
steps (up to some universal constant) during the descend from the level line $p_n(z)=5$.
This prediction is conform with our observations.

\medskip
We have consistently observed that about 5 to 7 \% of the roots of $p_n$ are real ones.
For $m_{\ell,n}$, the proportion is asymptotically the same for large $\ell+n$, even though
the proportion starts at a higher level (9 to 15\% for $10\leq \ell+n\leq 20$).

\subsubsection{Usage of high precision arithmetic}\label{par:BenchNewtonSteps2}

\begin{figure}[H]
\captionsetup{width=\linewidth}
\begin{center}
\includegraphics[width=0.9\textwidth]{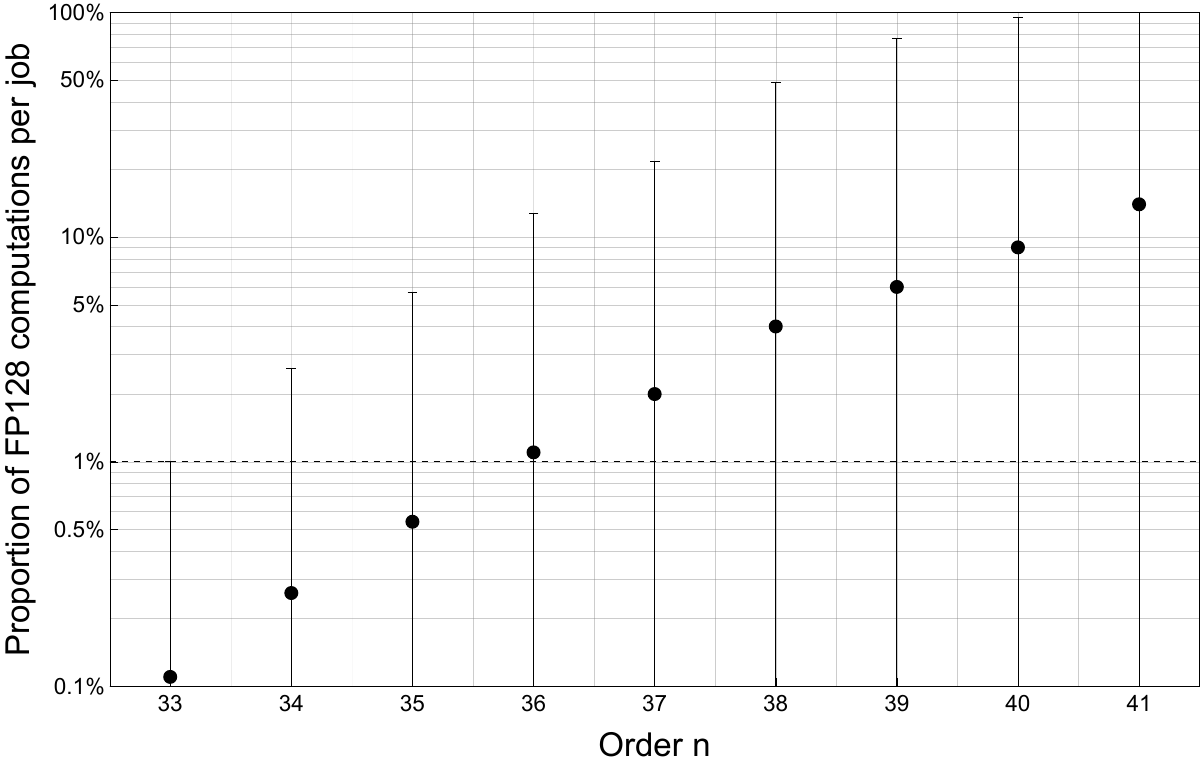}
\caption{\label{fig:mpfrRatio}\small
Proportion of the computations for splitting $p_n$ that were performed
using high-precision arithmetic operations \citelib{MPFR}. The black dot represents the average value. The error bars 
represent the variability between all raw search jobs.
}
\end{center}
\end{figure}

Tables~\ref{table:newtonSteps} and~\ref{table:newtonStepsMis} indicate that the total number of Newton steps
for each stage of our algorithm is relatively stable and scales linearly with the degree of the polynomial.
However, we have observed an extreme variability in job durations, even though each job is calibrated to deal
with the same number of points on the level line. It turns out that the duration is mostly determined by the amount
of computations of the level line that need to be performed in high-precision arithmetic, with the \texttt{MPFR} library.

For our implementation, a practical rule of thumb is that the duration,
in hours, of a large raw search job is about $14\times R +10$ where $R$ is the
ratio of computations that require high-precision. This ratio is illustrated on Figure~\ref{fig:mpfrRatio}.
For period 33, most level lines can be completed using hardware arithmetic. This proportion drops to 86\% on average
for splitting $p_{41}$, with some jobs (typically near the left-most tip, see Section~\ref{par:HardwareLimit})
requiring almost exclusively high-precision computations.

Note that, on the other hand, the proportion of \texttt{MPFR} computations in the Newton descend is almost constant.

\bigskip
Overall, the benchmarks presented in \S\ref{par:BenchNewtonSteps}-\ref{par:BenchNewtonSteps2}
confirm that, even in the presence of roots with high multiplicities, our algorithm based on level lines
is really robust and performs well. Its practical complexity remains $O(d)$, even when $d$ is pushed to the
tera-scale. Those observations concern the family of polynomials related to the Mandelbrot set
and we do not claim nor suggest that those exceptional performances will remain in full generality.

\subsubsection{Memory and hardware requirements}

The raw search of roots was split into many independent jobs.
Jobs where dimensioned so that they would, on average, correspond to 1.1GB of data
with a few exceptions near $z=-2$ where the job size can raise up to a little more than 2GB.
Two reasons prevailed in favor of that choice.

First, the memory requirement for each job is fairly limited: each job compares the
new roots with the previous one (if pressent) so both are loaded in memory and eat up 4GB,
plus the internal data structure can take up another 2GB. Overall, we were able to run all our
computations while requiring only 7GB of RAM per CPU.
This limit happens to roughly coincide with the average
of RAM available on our cluster, when the total RAM of each computing node is equally divided
among all the cores of the node.  The practical consequence was that the scheduler was usually
able to accommodate our requests for a high number of parallel cores, which would not have
been the case if a few CPU would have eaten up the whole memory of a node.

Second, writing the final data back into a few files of 1TB did not strike us a good idea, because
few people have access to resources that allow them to handle such a monster file reliably,
while transferts of 2G files are common.

\bigskip
The second stage (counting that we have all roots and write the final files without duplicates)
is performed on a single CPU and requires as much memory as possible. 
For hyperbolic periods 37 and up, we used 175GB of RAM which allowed most sort operations
to be performed without reloading the same file twice. 

Our implementation can accommodate tight memory limitations, though the time required to sort
and merge the entire database may improve significantly if multiple reloads are necessary.
For example, for Misiurewicz types of order $\ell+n\geq 32$, we had to reduce the memory per core to
accommodate the growing number of subtypes\footnote{For $\mis(\ell,n)$ with $\ell+n=33$, we requested only 26GB for each of the
31 cores dedicated to the count, instead of 73GB for the single core used for $\hyp(33)$. Consequently, each file had to be reloaded about~2.5 times.}

Note that the loading time (waiting for \texttt{fopen} system calls) can be substantial.
In our case, depending on the ambient load on the file system, the loading
times ranged from 18\% to 98\% of the total computation time for the second stage.
In some cases, we where able to preload the caches with the upcoming files
by running a side process that would monitor the logs and \texttt{cat file > /dev/null}
while the previous files where being processed.

\bigskip
The third stage (certification)  requires very little memory (no more than twice size of a
final \texttt{nset} file) so 5GB per CPU dedicated to a certification task is sufficient.

\subsubsection{The jobs market}

The raw search stage is embarrassingly parallel. However, the duration of the jobs presents large
variability even for a given hyperbolic period or a Misiurewicz sub-type: it can range from half the average
compute time up to 8 times the average duration of the jobs that handle the same polynomial. This
heterogeneity calls for a vertical parallelism, where each compute node calls for a new task as soon
as it becomes available, asynchronously from the other nodes that remain busy.

\medskip

The human task of submitting new jobs is also non trivial: to generate our database, even without a single failure,
49\,427 raw jobs are necessary (about 2/3 of them for $\hyp(n)$), followed by 1202 jobs for the later phases (most
of them for $\mis(\ell,n)$).
Identifying the raw jobs that have crashed due to node failure, network saturation, disk lag
(which happened to be a common plague on our busy HPC center) or job timeout can easily
turn into a nightmare.

\medskip

To optimise our use of HPC resources among heterogeneous jobs and automate the task of submitting raw jobs,
we developed our own task scheduler, which is a small \texttt{Java} app, called \textit{job market}.
It records the progress of all computing tasks, distributes new tasks asynchronously to each available
computing node, recycles failed jobs and produces useful statistics.

\subsubsection{Use a GPU or not?}

In order to keep our implementation as simple (and human readable) as possible, we decided against
a mixed code GPU/CPU. However, we may revisit this position in the future. Here are a few thoughts about
this choice.

\medskip

For massively parallel computations, GPUs offers impressive performances.
We briefly explored this path; however, our algorithm reaches the limits of hardware
arithmetic very quickly (see \S\ref{par:HardwareLimit}). For now, GPUs are
essentially limited to FP64 arithmetic, which gives a first objection against their usage
for splitting polynomials of high degree, pending a library that can emulate high-precision arithmetic.

The second objection is that the computational gain is not completely obvious
because it is not easy to predict how long the dynamics
of the Newton map will take before the algorithm can classify the starting point.
It is silly to keep running Newton steps just because other threads are still doing it; and it is equally
counterproductive to cut an iteration short just because the others are done. Considering that the main
gain of GPUs lies in running identical parallel threads, a naive implementation may therefore tend
to impose the worst case as the statistical norm, thus defeating the purpose of running on a GPU.


\appendix
\bigskip\bigskip\par\noindent
\textbf{\huge Appendices}


\section{Proof of the factorization theorem}\label{par:factorization}

Let us give here a direct proof of the factorization theorem (\ie Theorem~\ref{thm:fact_mis}).

\begin{proof}
As mentioned above, thanks to \cite{HT15}, only the multiplicity of the hyperbolic factors has to be established.
For $\ell\in\{0,1\}$, the formula~\eqref{eq:MisFactor} boils down to~\eqref{eq:hypCase} so
we will suppose $\ell\geq 2$ from now on.

\medskip
The case of $\hyp(1)=\{0\}$
is essentially based on the well known fact (proven by direct induction)
that the trailing coefficients of $p_n(z)$ stabilize as $n$ grows; more precisely:
\begin{equation}
\forall k\in\N, \qquad p_n(z) \equiv p_{n+k}(z) \mod z^{n+1},
\end{equation}
thus $q_{\ell,n}(z)\equiv 0 \mod z^{\ell+1}$. On the other hand, one can check that
\begin{equation}
q_{\ell,n}(z)\equiv 2^{\ell-1} z^{\ell+1} \mod z^{\ell+2}
\end{equation}
so zero is exactly of multiplicity $\ell+1 = \eta_\ell(1)$.

\medskip
Next, let us check that the multiplicities in~\eqref{eq:MisFactor} are consistent with $\deg q_{\ell,n}$, \ie:
\begin{equation}\label{eq:MisDeg}
\sum_{k\vert n} \left( \eta_\ell(k)\left|\hyp(k)\right| + \sum_{j=2}^{\ell} \left|\mis(j,k)\right| \right) = 2^{\ell+n-1}.
\end{equation}
Indeed, using~\eqref{eq:HypCount} and~\eqref{eq:MisCount} and a geometric sum, the left-hand side equals
\begin{align*}
\sum_{k\vert n} & \left|\hyp(k)\right|\left( \eta_\ell(k) + \sum_{j=2}^{\ell} \Phi(j,k) \right) \\ & =
\sum_{k\vert n} \left(\sum_{m\vert k} \mu(k/m)2^{m-1}\right)
\left( 2^\ell + \left\lfloor \frac{\ell -1}{k} \right\rfloor - \sum_{j=2}^{\ell}\delta_{k\vert j-1} \right)
\end{align*}
where $\delta_{k\vert j-1} =1$ if $k$ divides $j-1$ and $0$ otherwise.
For any integers $\lambda, k \in \N^\ast$, let us observe that 
\begin{equation}
\left\lfloor \frac{\lambda}{k} \right\rfloor = 
\left| \left\{ j\in\ii{1}{\lambda} \,;\,  k \vert j\right\} \right|
= \sum_{j=1}^{\lambda}\delta_{k\vert j}.
\end{equation}
The claim~\eqref{eq:MisDeg} thus follows from Möbius inversion formula \cite{Mob1832}-\cite{Rota1963}:
\begin{equation}
\sum_{k\vert n} \left(\sum_{m\vert k} \mu(k/m)2^{m}\right)= 2^{n}.
\end{equation}

\medskip
Let us now consider the case of $\hyp(k)$ for $k\vert n$ and $1<k\leq n$.
Note that if $n$ is prime, there is only one hyperbolic factor left, namely $k=n$ and
the factorization~\eqref{eq:MisFactor} follows from an argument of
divisibility and the identity of degrees~\eqref{eq:MisDeg}.
We can however treat the general case in a unified way, regardless of wether $n$ is composite or not
and prove~\eqref{eq:MisFactor} by recurrence on $\ell$. One has
\begin{align}
q_{\ell,n}(z)
&= p_{\ell+n}(z) - p_\ell(z)  \notag\\
&= p_{\ell+n-1}^2(z) -p_{\ell-1}^2(z)  \notag\\
&= q_{\ell-1,n}(z) (p_{\ell+n-1}(z)+p_{\ell-1}(z)) \label{eq:factorDiffSquares}\\
&= q_{\ell-1,n}(z) (q_{\ell-1,n}(z)+2p_{\ell-1}(z)). \notag
\end{align}
The recurrence assumption reads
\begin{equation}\label{eq:proofA}
q_{\ell-1,n} = \prod_{k\vert n} \left( h_k^{\eta_{\ell-1}(k)} \prod_{j=2}^{\ell-1} m_{j,k} \right)
\end{equation}
and, from the hyperbolic case, we know that
\[
p_{\ell-1} = \prod_{j\vert \ell-1} h_j.
\]
If $k$ divides $n$ but not $\ell-1$, then $h_k$ divides $q_{\ell-1,n}(z)$ but not $p_{\ell-1}$ so
$h_k$ does not divide $q_{\ell-1,n}+2p_{\ell-1}$.
Alternatively, if $k$ divides both $n$ and $\ell-1$ then $h_k$ divides $q_{\ell-1,n}+2p_{\ell-1}$.
However $h_k^2$ does not because it is a factor of $q_{\ell-1,n}$ (see the Lemma~\ref{LM} below)
but  not of~$p_{\ell-1}$.
Finally, the polynomials $m_{\ell,k}$ for $k\vert n$ are known factors of $q_{\ell,n}$ that do not divide $q_{\ell-1,n}$;
as $q_{\ell,n}/q_{\ell-1,n}$ has simple roots, they are simple factors of $q_{\ell,n}$. There are no other
Misiurewicz-type factors. We have thus established that:
\begin{equation}\label{eq:proofB}
q_{\ell,n} =   \left(\prod_{k\vert \operatorname{gcd}(n,\ell-1)} h_k \right)
\left(\prod_{k\vert n} m_{\ell,k} \right) q_{\ell-1,n}.
\end{equation}

Let us finally observe that, for $\ell, k\geq2$:
\begin{equation}\label{eq:recEta}
\eta_{\ell-1}(k) =
\begin{cases}
\eta_\ell(k)-1 & \text{if }k\vert \ell-1,\\
\eta_\ell(k) & \text{else.}
\end{cases}
\end{equation}
Indeed, if $k\vert\ell-1$ then \eqref{eq:recEta} follows directly from~\eqref{eq:defMultHyp};
conversely, if $k$ does not divide $\ell-1$ then $\frac{\ell-1}{k}\in \Z+[\frac{1}{k},1)$ and
\[
\eta_{\ell-1}(k) =\left\lfloor \frac{\ell-1}{k} - \frac{1}{k} \right\rfloor + 2
=\left\lfloor \frac{\ell-1}{k} \right\rfloor + 2 = \eta_\ell(k).
\]
Combining~\eqref{eq:proofA}, \eqref{eq:proofB} and~\eqref{eq:recEta}
ensures that~\eqref{eq:MisFactor} holds for the next pre-period.
\end{proof}

\begin{remark}
The previous proof establishes the following identity (see also~\eqref{eq:polySimple}):
\begin{equation}\label{eq:polyMisSimple}
s_{\ell,n}=\frac{q_{\ell,n}}{q_{\ell-1,n}} = p_{\ell+n-1}+p_{\ell-1} = \left(\prod_{k\vert \operatorname{gcd}(n,\ell-1)} h_k \right)
\left(\prod_{k\vert n} m_{\ell,k} \right).
\end{equation}
\end{remark}

To illustrate our method, the simplest case of a composite period is:
\[
q_{2,4}(z) = p_6(z) - p_2(z) = p_5^2(z) -p_1^2(z) = q_{1,4}(z) (p_5(z)+p_1(z)) = p_4^2(z) (p_4^2(z)+2z).
\]
As $p_4 = h_4 h_2 h_1$, the polynomial
$q_{2,4}$ is divisible by $h_4^2$ but not by $h_4^3$. The exponent of $\hyp(4)$ is therefore exactly 2
and we are left with a single hyperbolic factor, namely~$\hyp(2)$. Using the degree identity~\eqref{eq:MisDeg}
thus ensures that
\[
q_{2,4}(z) = h_1^3(z)h_2^2(z)h_4^2(z) m_{2,1}(z) m_{2,2}(z) m_{2,4}(z).
\]

The only check-up left is the following statement.
\begin{lemma}\label{LM}
If $k\geq 1$ divides both $\ell\geq1$ and $n\geq1$, then $h_k^2$ is a factor of $q_{\ell,n}$.
\end{lemma}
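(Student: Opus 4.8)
The plan is to convert the divisibility $h_k^2\mid q_{\ell,n}$ into a statement about root multiplicities, and then verify it one root at a time using the dynamical reading $p_m(c)=f_c^m(0)$. Since $h_k=\prod_{r\in\hyp(k)}(z-r)$ is a product of pairwise distinct linear factors (see~\eqref{eq:hyp_red}), $h_k^2$ divides $q_{\ell,n}$ if and only if every $r\in\hyp(k)$ is a root of $q_{\ell,n}$ of multiplicity at least $2$, i.e. $q_{\ell,n}(r)=q_{\ell,n}'(r)=0$ for each such $r$. As $h_k$ is monic with integer coefficients and $q_{\ell,n}\in\Z[z]$, the resulting factorization then automatically takes place in $\Z[z]$. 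So I would fix $r\in\hyp(k)$ and establish the two vanishings separately.

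For the value, I would use the definition~\eqref{def:hyp}: $p_k(r)=0$ while $p_{k'}(r)\neq0$ for every strict divisor $k'$ of $k$, so the critical orbit $\big(p_i(r)\big)_{i\geq0}=\big(f_r^i(0)\big)_{i\geq0}$ is periodic of \emph{exact} period $k$, and hence $p_i(r)=0$ precisely when $k\mid i$. Because $k$ divides both $\ell$ and $n$, it divides $\ell$ and $\ell+n$, so $p_\ell(r)=p_{\ell+n}(r)=0$ and therefore $q_{\ell,n}(r)=p_{\ell+n}(r)-p_\ell(r)=0$.

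For the derivative, I would differentiate the recursion $p_{m+1}=p_m^2+z$ to get $p_{m+1}'=2p_mp_m'+1$ with $p_0'=0$, which unfolds by induction into
\[
p_m'(z)=\sum_{j=0}^{m-1}\ \prod_{i=m-j}^{m-1} 2\,p_i(z),
\]
the empty product being $1$. Evaluating this at $r$ for an index $m$ that is a positive multiple of $k$: in the $j$-th summand the factors are $2p_i(r)$ over the block $\{m-j,\dots,m-1\}$; if $j\geq k$ this block contains the multiple $m-k$ of $k$, so by the previous step the summand is $0$, whereas for $1\leq j\leq k-1$ the indices in the block run through the residues $k-j,\dots,k-1$ modulo $k$, so periodicity of $\big(p_i(r)\big)_i$ makes the summand equal to $\prod_{i=k-j}^{k-1}2\,p_i(r)$, which does not depend on $m$. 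Hence $p_m'(r)$ equals one and the same number for every positive multiple $m$ of $k$; in particular $p_\ell'(r)=p_{\ell+n}'(r)$, and so $q_{\ell,n}'(r)=0$. Equivalently, and perhaps more cleanly: over one full period the affine recursion $b\mapsto 2p_j(r)\,b+1$ composes to a \emph{constant} map, since the product of its linear coefficients contains the factor $2p_0(r)=0$, so $p_m'(r)$ is locked to a single value along multiples of $k$.

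Combining the two vanishings over all $r\in\hyp(k)$ — which are distinct — then gives $h_k^2\mid q_{\ell,n}$, as claimed; the special case $k=1$ (where $h_1=z$) is included and recovers $z^2\mid q_{\ell,n}$. The only point that is more than bookkeeping is the exact-period statement ``$p_i(r)=0\iff k\mid i$'', which I would deduce from the periodicity of the critical orbit together with the minimality condition in~\eqref{def:hyp}: if $f_r^i(0)=0$ then $f_r^{\gcd(i,k)}(0)=0$, and $\gcd(i,k)$ being a divisor of $k$ with $p_{\gcd(i,k)}(r)=0$ forces $\gcd(i,k)=k$. Everything after that is a routine computation.
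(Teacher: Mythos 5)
Your proof is correct, and it takes a genuinely different route from the paper's. The paper proceeds purely algebraically: it first reduces the general case $\ell=kr$, $n=kq$ to the special case $\ell=k$ via the telescoping identity $q_{kr,kq}=q_{k,k(q+r-1)}-q_{k,k(r-1)}$, and then unfolds the difference-of-squares factorization~\eqref{eq:factorDiffSquares} down to $q_{1,jk}=p_{jk}^2$, which manifestly contains $h_k^2$ since $k\vert jk$. You instead verify multiplicity $\geq 2$ at each root $r\in\hyp(k)$ directly: $q_{\ell,n}(r)=0$ from the exact periodicity of the critical orbit, and $q_{\ell,n}'(r)=0$ from the closed form
\[
p_m'(z)=\sum_{j=0}^{m-1}\prod_{i=m-j}^{m-1}2\,p_i(z),
\]
together with the observation that the factor $2p_{m-k}(r)=0$ kills every summand with $j\geq k$, so $p_m'(r)$ is constant over positive multiples $m$ of $k$. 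Both arguments are essentially the same length. The paper's version stays inside polynomial algebra and meshes neatly with the rest of the factorization proof (it re-uses the identity~\eqref{eq:factorDiffSquares} already introduced there). Your version is more dynamical and localizes the statement to individual hyperbolic centers; a side benefit is that it exhibits the common value of $p_{jk}'(r)$ explicitly in terms of $\{p_i(r)\}_{1\leq i<k}$, which connects naturally to multiplier and transversality computations along the cycle. One presentational nit: when you pass from ``the affine recursion composes to a constant map over one period'' to ``$p_m'(r)$ is locked to a single value along multiples of $k$'', it is worth saying in one clause that the constant map is the same regardless of which period block you compose over (by $k$-periodicity of the coefficients), and that applying it once from $p_0'(r)$ already pins down the value — otherwise a careless reader might worry the constant depends on $m$. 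With that sentence added, the argument is airtight.
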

\begin{proof}
It is sufficient to establish the lemma for $q_{k,jk}$ with $j\in\N$ because
\[
\forall r,q\in\N^\ast, \quad
q_{kr,kq}=p_{k(q+r)}-p_{k}-p_{kr}+ p_{k} = q_{k,k(q+r-1)}-q_{k,k(r-1)}.
\]
As $q_{k,0}=0$, let us assume that $j\geq 1$; using~\eqref{eq:factorDiffSquares} repeatedly one has.
\begin{align*}
q_{k,jk}
&= q_{k-1,jk} \times (p_{(j+1)k-1}+p_{k-1}) \\
&= q_{k-2,jk} \times (p_{(j+1)k-2}+p_{k-2})  (p_{(j+1)k-1}+p_{k-1}) \\
&= \ldots \\
&= q_{1,jk} \times (p_{(j+1)k-2}+p_{k-2})  (p_{(j+1)k-1}+p_{k-1}) \ldots (p_{jk+1}+p_{1}) 
\end{align*}
Therefore, $p_{jk}^2=q_{1,jk}$ is a factor of $q_{k,jk}$. As $k\vert jk$, then $h_k$ is itself a factor of $p_{jk}$
and so $h_k^2$ divides $p_{jk}^2$ and consequently  $q_{k,jk}$ too.
\end{proof}


\section{Useful mathematical results}

\label{par:mathBackground}
We expect our readers to have (like us authors) varied mathematical backgrounds.
For the convenience of all, we recall here briefly a few mathematical results that are quite standard to
the specialists, but might not be well known to all.

\subsection{Localization of the roots of polynomials}

To renormalize the roots of $P\in\C[z]$ within $\disk(0,1)$ one may consider $P(z/\rho)$ where
$\rho>r$ and $r$ is given by the following statement.
\begin{thm}[{\cite[Theorem~1.2.4]{BORERD}}]\label{thm:rootLoc}
All the zeros of $P(z)=\sum\limits_{k=0}^n a_k z^k$ are located in the disk $\overline{\disk(0,r)}$ where
\[
r = \inf_{\frac{1}{p}+\frac{1}{q}=1} \left\{ 1+\left(\sum_{j=0}^{n-1} \frac{|a_j|^p}{|a_n|^p} \right)^{q/p} \right\}^{1/q}
\]
and provided $a_n\neq0$ and $p,q>1$.
\end{thm}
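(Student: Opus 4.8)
The plan is to reduce the statement to a single application of Hölder's inequality applied to the linear recurrence that a root must satisfy. First I would normalise: since $a_n\neq 0$, dividing $P$ by $a_n$ leaves its zero set unchanged, so one may assume $a_n=1$, in which case the quantity to be proved is
\[
r = \inf_{\frac1p+\frac1q=1}\Big\{\, 1+\Big(\sum_{j=0}^{n-1}|a_j|^p\Big)^{q/p}\,\Big\}^{1/q}.
\]
Fix one conjugate pair $(p,q)$ with $p,q>1$ and set $r_{p,q}=\big\{1+(\sum_{j<n}|a_j|^p)^{q/p}\big\}^{1/q}$. It suffices to show that every zero $z$ of $P$ satisfies $|z|\leq r_{p,q}$, since taking the infimum over all such pairs then yields $|z|\leq r$. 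As $r_{p,q}\geq 1$, the case $|z|\leq 1$ is immediate, so I may assume $|z|>1$ from now on.

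Next, from $P(z)=0$ one has $z^n=-\sum_{j=0}^{n-1}a_j z^j$, hence, by the triangle inequality followed by Hölder's inequality applied to the vectors $(|a_j|)_{0\le j<n}$ and $(|z|^{\,j})_{0\le j<n}$,
\[
|z|^n \;\le\; \sum_{j=0}^{n-1}|a_j|\,|z|^{\,j} \;\le\; \Big(\sum_{j=0}^{n-1}|a_j|^p\Big)^{1/p}\Big(\sum_{j=0}^{n-1}|z|^{\,jq}\Big)^{1/q}.
\]
Since $|z|^q>1$, the geometric sum obeys $\sum_{j=0}^{n-1}|z|^{\,jq}=\frac{|z|^{nq}-1}{|z|^q-1}<\frac{|z|^{nq}}{|z|^q-1}$. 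Substituting this bound and raising the resulting inequality to the power $q$ gives
\[
|z|^{nq} \;\le\; \Big(\sum_{j=0}^{n-1}|a_j|^p\Big)^{q/p}\,\frac{|z|^{nq}}{|z|^q-1},
\]
and dividing by the positive quantity $|z|^{nq}$ leaves $|z|^q-1\le (\sum_{j<n}|a_j|^p)^{q/p}$, i.e. $|z|\le r_{p,q}$, which is exactly what was wanted.

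Two degenerate situations need only a brief remark: if $\sum_{j<n}|a_j|^p=0$ then $P(z)=z^n$ and its sole zero $0$ clearly lies in $\overline{\disk(0,r)}$; and the normalisation $a_n=1$ is what replaces each $|a_j|^p$ by $|a_j|^p/|a_n|^p$ in the statement for a general leading coefficient. I do not expect any genuine obstacle here — the only points requiring care are the bookkeeping in the geometric-series estimate (valid precisely because we reduced to $|z|>1$) and the verification that the estimate is uniform in $(p,q)$ so that passing to the infimum is legitimate; the result is otherwise a direct corollary of Hölder's inequality.
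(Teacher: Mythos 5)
The paper does not give a proof of Theorem~\ref{thm:rootLoc}: it is quoted verbatim from Borwein and Erd\'{e}lyi~\cite[Theorem~1.2.4]{BORERD} as background, so there is no in-paper argument to compare against. Your blind proof is a complete and correct self-contained derivation, and it is in fact the standard one for this family of bounds: normalise to $a_n=1$, observe that $r_{p,q}\geq 1$ disposes of $|z|\leq 1$, then for $|z|>1$ apply the triangle inequality and H\"older to $z^n=-\sum_{j<n}a_jz^j$, bound the geometric factor $\sum_{j<n}|z|^{jq}$ by $|z|^{nq}/(|z|^q-1)$, and rearrange to get $|z|^q\leq 1+\bigl(\sum_{j<n}|a_j|^p\bigr)^{q/p}$; the degenerate case $\sum_{j<n}|a_j|^p=0$ is handled separately. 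All the algebra checks out. One small cleanup: the closing worry about the estimate being ``uniform in $(p,q)$'' is a non-issue --- you prove $|z|\leq r_{p,q}$ for every admissible conjugate pair $(p,q)$, and $|z|\leq\inf_{p,q}r_{p,q}$ follows immediately from the definition of infimum, with no uniformity or limiting argument required.
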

\begin{remark}
For various classical results on this mater, see~\cite[\S6.4]{HEN1974}.
\end{remark}

Small perturbations of analytic functions do not affect much the location of their roots,
as stated by Rouch\'{e}'s theorem, stated here in its strong symmetric form.

\begin{thm}[Rouch\'{e}-Estermann]\label{thm:rouche}
Given two holomorphic functions $f$, $g$ on a domain $G$ and a bounded region $K\subset G$
with continuous boundary $\partial K$.
If the following strict inequality holds:
\[
\forall z\in\partial K, \qquad |f(z)-g(z)| < |f(z)| + |g(z)|
\]
then $f$ and $g$ have the same number of roots (counted with multiplicity) in $K$.
\end{thm}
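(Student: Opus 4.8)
The plan is to reduce the symmetric hypothesis to a statement about the meromorphic quotient $h=f/g$ and then to invoke the argument principle. First I would check that neither $f$ nor $g$ vanishes on $\partial K$: if $f(z_0)=0$ for some $z_0\in\partial K$, the assumed inequality degenerates to $|g(z_0)|<|g(z_0)|$, which is impossible, and symmetrically for $g$. Hence $h=f/g$ is holomorphic and zero-free on a neighbourhood of $\partial K$, and the two quantities ``number of roots of $f$ in $K$'' and ``number of roots of $g$ in $K$'' are well defined and finite, provided $f$ and $g$ are not identically zero on any component of $G$ meeting $\bar K$ (a harmless standing assumption, else the statement is vacuous or ill-posed).

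The heart of the matter is an elementary planar observation. On $\partial K$ the inequality reads $|h(z)-1|<|h(z)|+1$, and this excludes \emph{exactly} the non-positive real values of $h(z)$: indeed, if $h(z)=-t$ with $t\geq 0$ then $|h(z)-1|=t+1=|h(z)|+1$, contradicting strictness; conversely, for any $w\notin(-\infty,0]$ the triangle inequality is strict. Therefore $h(\partial K)\subset\C\setminus(-\infty,0]$, which is simply connected and avoids the origin, so $h\circ\partial K$ has zero total winding number around $0$.

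It remains to connect this winding number to the zero counts. Writing $\partial K$ as its positively oriented boundary curves, the argument principle gives
\[
\#\{f^{-1}(0)\cap K\} - \#\{g^{-1}(0)\cap K\}
= \frac{1}{2i\pi}\int_{\partial K}\left(\frac{f'}{f}-\frac{g'}{g}\right)dz
= \frac{1}{2i\pi}\int_{\partial K}\frac{h'}{h}\,dz,
\]
the last integral being the total winding number of $h\circ\partial K$ about $0$, which we just saw is $0$. Hence the two counts are equal. An equivalent route is homotopic: set $f_t=(1-t)g+tf$; on $\partial K$ one has $f_t(z)=0$ only if $f(z)/g(z)=-(1-t)/t\in(-\infty,0]$, which the slit-plane property forbids, so $f_t$ is zero-free on $\partial K$ and its zero count in $K$ is locally constant in $t$, hence equal at $t=0$ and $t=1$.

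The only genuine obstacle is boundary regularity: the argument principle (and the continuity-of-zero-count argument) is cleanest when $\partial K$ is a finite union of rectifiable Jordan curves. For a general ``continuous boundary'' I would either adopt this as the precise meaning of the hypothesis, or run a standard exhaustion: approximate $K$ from inside by domains with smooth boundary on which the strict inequality still holds (possible since it is strict and $f,g$ are uniformly nonzero near $\partial K$), apply the result there, and pass to the limit. Everything else is the two-line observation that $|w-1|<|w|+1$ characterizes $w\notin(-\infty,0]$.
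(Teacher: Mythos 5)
Your proof is correct, and it is the standard argument for the symmetric (Estermann/Glicksberg) form of Rouch\'e's theorem: the key observation that the strict inequality $|w-1|<|w|+1$ characterizes $w\notin(-\infty,0]$, hence $h=f/g$ maps $\partial K$ into a simply connected region avoiding the origin, forcing the winding number (equivalently, $\int_{\partial K} h'/h$) to vanish. The homotopy variant you sketch is equally standard and equally valid. Note that the paper itself does not prove Theorem~\ref{thm:rouche}: it is stated in Appendix~\ref{par:mathBackground} as a background fact (with only the remark about the classical case $|h|<|f|$ on $\partial K$), so there is no paper proof to compare against. Your caveat about boundary regularity is also well taken; the ``continuous boundary'' in the statement is best read as a finite union of rectifiable Jordan curves, or handled by an exhaustion as you indicate, since the strictness of the inequality is stable under small interior perturbations of the contour.
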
\noindent%
A common case of application is when $|h(z)|<|f(z)|$ on $\partial K$. In that case, the theorem can be applied with $g=f+h$ and ensures
that the perturbation $h$ does not change the number of roots of $f$.

\subsection{Injectivity of analytic functions}

The following result quantifies the local injectivity of analytic functions (see~\cite{CARLESON}).

\begin{thm}[Koebe's lemma]\label{thm:kobe}
If $f:\disk(0,1)\to\C$ is an injective analytic function, then
$f(\disk(0,1)) \supset \disk(f(0), |f'(0)|/4)$ and, more generally:
\begin{equation}\label{eq:kobe}
\forall r\in [0,1),\qquad
\disk\left( f(0), \frac{r |f'(0)|}{(1+r)^2} \right) \subset f(\disk(0,r)) \subset \disk\left( f(0), \frac{r |f'(0)|}{(1-r)^2} \right) \,.
\end{equation}
\end{thm}\noindent%
The universality of the constant $1/4$ constitutes a radical breach
between real and complex analysis.
The optimality of this constant can be checked on the function
\[
f(z)=\frac{z}{(1-z)^2}
\]
because $f'(0)=1$ but the value $f(-1)=-1/4\not\in f(\disk(0,1))$.

\medskip
We also use of the following stronger result, based on the Bieberbach conjecture,
which was proven by de Branges \cite{Bra85}. 

\begin{thm}\label{thm:target}
Given $g:\disk(0,1)\to\C$ an injective analytic function with $g(0)=0$ and $g'(0)=1$
and $f \defequal g^{-1}$ its inverse,
there exist universal constants $\rho_1 \simeq 0.121507$ and $\beta_1\simeq 0.0987043$ such that $g(\disk(0,\beta_1))\subset \disk(0,\rho_1)$, and $\disk(0,\rho_1)$ is contained in the immediate basin of attraction of $0$ for the Newton map~$N_f$.
\end{thm}
\begin{proof}
Let $\Omega \defequal g(\disk(0,1))$, $z \in \Omega$ and $y \defequal f(z)$. Then, combining~\eqref{eq:Newton}
and the chain rule for derivatives:
\[
|N_f(z)| = |g(y)-yg'(y)|.
\]
A direct computation with the expansion $g(y) = y + a_2 y^2 + a_3 y^3 + \ldots$ gives
\[
|g(y)-yg'(y)| \leq \sum_{n \geq 2} (n-1)|a_n| r^{n}
\quad\text{and}\quad
|g(y)|\geq r-\sum_{n\geq 2}|a_n|r^n
\,,
\]
where $r \defequal |y|$.
In order to ensure $|N_f(z)| < |z| = |g(y)|$, it is therefore enough that 
\[
\sum_{n \geq 2} n |a_n| r^{n} < r.
\]
By de Branges's theorem \cite{Bra85}, one has $|a_n|\leq n$ for all $n\geq2$.
Let $\beta_0$ be the unique positive solution of
\[
\frac{x(x^2-3x+4)}{(1-x)^3} = \sum_{n \geq 2} n^2 x^{n-1}=1,
\]
which satisfies that $\beta_0\simeq 0.164878 \in (164,165)/1000$. We have thus shown that $r < \beta_0$ \ie
$z\in g(\disk(0,\beta_0))$ ensures that $|N_f(z)| < |z|$.
To ensure this property for subsequent iterations, let us take $z\in\disk(0,\rho_1)$ with $\rho_1=\beta_0(1+\beta_0)^{-2}\simeq
0.121507$. Then~\eqref{eq:kobe} applied to $g$ guaranties that $\disk(0,\rho_1) \subset g(\disk(0,\beta_0))$
and the previous considerations give
\[
N_f(z) \in \disk(0,|z|)\subset \disk(0,\rho_1).
\]
Consequently, by Montel's theorem, the iterates of $N_f$ on $\disk(0,\rho_1)$ form a normal familly,
which converges towards zero, the only zero of $f$ in $\disk(0,\rho_1)$.
Finally, taking $\beta_1\simeq 0.0987043$ such that $\beta_1(1-\beta_1)^{-2}=\rho_1$, the estimate~\eqref{eq:kobe}
ensures that $g(\disk(0,\beta_1))\subset \disk(0,\rho_1)$ is also included in the immediate bassin of attraction of the origin for~$N_f$.
\end{proof}

\begin{corollary}\label{cor:targetQuad}
Under the assumptions of Theorem~\ref{thm:target}, 
there exists universal constants $\rho_2\simeq 0.110549$ and $\beta_2\simeq 0.091287$ such that
$g(\disk(0,\beta_2)) \subset \disk(0,\rho_2)$ and if $z\in\disk(0,\rho_2)$, then for any $\epsilon\in(0,1/8)$
and
\[
k\geq 3 + \log_2 |\log_2 \epsilon|,
\]
the iterate $N_f^k (z)\in\disk(0,\epsilon)$. If $\epsilon>1/8$, then $z$ is already in $\disk(0,\epsilon)$.
\end{corollary}
\proof
If $|z|<\rho_2$, then $r=|f(z)|\leq r_2$ s.t. $r_2(1+r_2)^{-2}=\rho_2$
and
\[
|N_f(z)| \leq \sum_{n \geq 2} n(n-1) r^{n} \leq 8 \Big(r - \sum_{n \geq 2} n r^{n}\Big)^2 \leq 8|z|^2.
\]
The inequality in the middle comes from a direct majoration
\[
\forall r\in[0,r_2],\qquad
\frac{\sum_{n \geq 2} n(n-1) r^{n}}{\Big(r - \sum_{n \geq 2} n r^{n}\Big)^2} = \frac{2 (1 -r)}{(2 r^2 -4r+1)^2} \leq
\frac{2 (1 -r_2)}{(2 r_2^2 -4r_2+1)^2} = 8.
\]
This choice leads to the numerical values $r_2\simeq 0.144910$ and $\rho_2\simeq 0.110549$.
In particular, as $\rho_2<\rho_1$, the disk $\disk(0,\rho_2)$ is stable under $N_f$ and the iterates converge uniformly to zero.
Note that $8|z|^2<0.9 |z|$ because $8|z|\leq 8\rho_2 < 0.9$. We thus have a bi-exponential rate:
\[
|N^{k}_f(z)|\leq \frac{1}{8}(8|z|)^{2^n}
\]
and it takes at most $\log_2\left| \frac{\log_2 8\epsilon }{\log_2 8|z|}\right|\leq \log_2 \frac{|\log_2\epsilon|}{-\log_28\rho_2}$ iterates to ensure that $|N_f^k(z)|<\epsilon$.

Finally, taking $\beta_2\simeq 0.091287$ such that $\beta_2(1-\beta_2)^{-2}=\rho_2$, the estimate~\eqref{eq:kobe}
ensures that $g(\disk(0,\beta_2))\subset \disk(0,\rho_2)$.
\endproof

\subsection{Riemann map of the exterior of $\M$}\label{par:RiemanExtM}

\begin{thm}[\cite{JUNG1985}]\label{thm:JUNG1985}
There exists a sequence of analytic maps $\Phi_k:\M^c \to\CC$ such that
\begin{equation}
\forall  k\geq 3, \qquad
\Phi_k(c)^{d_k} = p_k(c) \qquad\text{and}\qquad \Phi_k(c)\underset{|c|\to\infty}{\sim} c
\end{equation}
with $d_k=2^{k-1}$. The map $\Phi_k$ is one-to-one from $U_k=\{z\in\CC\,;\, |p_{k+1}(c)| >2\}$
onto the outer disk $\{z\in\CC \,;\, |z|>2^{2^{-k}}\}$.
The sequence converges uniformly on compact sets to the Riemann
map~$\M^c\,\cup\{\infty\}\to\bar{\C}\backslash\overline{\\disk(0,1)}^c$.
\end{thm}

\begin{remark}\label{rmkRiemannMap}
The Riemann map is given by $\Phi(c) = \varphi_c(c)$ where $\varphi_c$ is the map
\[
\varphi_c(z) = z \prod_{n=0}^\infty \left( 1+\frac{c}{f_c^n(z)^2} \right)^{2^{-n-1}}
= \lim_{n\to\infty} f_c^n(z)^{2^{-n}}
\]
that conjugates the dynamics on $f_c^{-1}(\CC\,\backslash\disk(0,|c|))$ to that of $z^2$, \ie
\[
f_c(z) = \varphi_c^{-1}(\varphi_c(z)^2).
\]
It satisfies $\varphi_c(z)  = z + o(1)$ at infinity.
The function $G_c(z) = \log|\varphi_c(z)|$ is the Green function of the unbounded component of $\fatou_c$, with pole at $\infty$.
\end{remark}

\newpage
\section{Index of notations}\label{par:notations}
We provide here a short index of our notations. By default, we use the American standard names, notations and spellings.

{\small
\paragraph{About integer, real and complex numbers}
\begin{description}[itemsep=-0.3ex, labelsep=0pt, align=left]
\item $k\vert n$ : the integer $k$ is a divisor of the integer $n$.
\item $\div(n)$ : set of all divisors of $n\in\N^\ast$.
\item $\div(n)^\ast = \div(n)\backslash\{n\}$ : strict divisors of $n$.
\item $\lfloor x \rfloor$ : floor function; $\lfloor x \rfloor = k$ if $k\in\Z$ and $x\in [k,k+1)$.
\item $\interval(x,r) = (x-r,x+r)$ : open interval along the real line.
\item $z=a+ib\in\C$ : complex numbers (with $a,b\in\R$); the euclidian and $\ell^1$ norms are given by:
\[
|z| = \sqrt{x^2+y^2} \qquad \text{ and }\qquad |z|_1 = |x|+|y|.
\]
\item $\disk(z,r) = \{z'\in\C \,;\, |z'-z|<r\}$ : complex open disk.
\item $\CC = \C \cup\{\infty\}$ : the Riemann sphere.
\item $\U_N = \left\{ e^{2ik \pi/N} \,;\, 0\leq k<N\right\}$ : the roots of unity of order $N\in\N^\ast$.
\item $\overset{\circ}{\Omega}$, $\overline{\Omega}$ : interior and closure of a subset $\Omega\in\CC$.
\end{description}

\paragraph{About the Mandelbrot set}
\begin{description}[itemsep=-0.3ex, labelsep=0pt, align=left]
\item $f_c(z)=z^2+c$ : the fundamental map~\eqref{eq:defFundMap} for the dynamics associated to $\M$.
\item $p_n$ and $q_{\ell,n}$ : hyperbolic~\eqref{def:hyp_poly}  and Misiurewicz-Thurston~\eqref{def:mis_poly} polynomials.
\item $\M = \left\{ c\in\C \,;\, \forall n\in\N, \enspace |p_n(c)|\leq 2\right\}$ : Mandelbrot set (Figure \ref{fig:mandelbrot}).
\item $\hyp(n)$ : set of hyperbolic centers of order $n$, defined by \eqref{def:hyp}.
\item $\mis(\ell,n)$  : set of pre-periodic (Misiurewicz-Thurston) parameters of type $(\ell,n)$, defined by \eqref{def:mis}.
\item $h_{n}$, $m_{\ell,n}$ and $s_{\ell,n}$ : reduced polynomials~\eqref{eq:hyp_red}, \eqref{eq:mis_red} and \eqref{eq:polySimple}.
\end{description}

\paragraph{About polynomials and splitting algorithms}
\begin{description}[itemsep=-0.3ex, labelsep=0pt, align=left]
\item $\crit(P)$ : set of critical points of $P$, \ie $P'(z)=0$.
\item $N_P$ : Newton map associated with a polynomial $P$ (see Section~\ref{par:Newton}).
\item $\zeta(t)$ : Newton's flow defined by the ODE~\eqref{eq:NewtonFlow}.
\item $\lambda(t)$ : level line defined by the ODE~\eqref{eq:LevelLinesFlow}.
\item $\lev_{\lambda_0,N}(P)$ : discrete level line of $P$ defined by \eqref{eq:defDiscreteLev}
and central to our splitting algorithm.
\item $\varepsilon_R$, $\varepsilon_N$, $\varepsilon_S$ : various radii related to the certification process
(see Theorem~\ref{thm:certif}).
\item $\mesh_r(d)$ : universal mesh of \cite{HSS2001} for splitting a polynomial of degree $d$ (see Section~\ref{par:HSS}).
\end{description}

\paragraph{About finite precision arithmetic}
\begin{description}[itemsep=-0.3ex, labelsep=0pt, align=left]
\item $\fprecRef_{N}$ : fundamental set \eqref{eq:fprecRefZ} of floating points numbers with finite precision.
\item $\hat\fprec_N$ : (theoretical) set of all \eqref{eq:fprecDefRhat} floating points numbers with a given precision.
\item $\fprec_N$ :  (realistic) subset \eqref{eq:fprecDefR} of $\hat\fprec_N$ with limited exponents.
\item $e(x)$ and $\widehat\ulp(x)=2^{e(x) - N}$ : exponent of $x\in\hat\fprec_N\backslash\{0\}$ and associated
\textit{unit on last position} \eqref{eq:defULP}.
\item $\hat\round_N:\R\to\hat\fprec_N$ : rounding operator from the real line onto finite precision numbers.
\end{description}
}


\section{Listing of tasks implemented in~\citelib{MLib}}\label{MTasks}

In our implementation~\citelib{MLib}, the tasks listed in this section are called in the command line
with \texttt{Mandelbrot -task [arguments]}. Use \texttt{-task -help} for more detailed informations.
The core functions of the interface are listed here.

{\small\noindent%
\begin{longtable}{p{.20\textwidth} p{.80\textwidth}} 
\multicolumn{2}{l}{\bf Tools for the inital setup\footnote{If possible, those three tasks will be merged into one common interface in future versions.}}\\   
\tt -levelSets & prepares the level sets for roots search\\
\tt -misSets  & prepares the level sets for Misiurewicz points search\\
\tt -misSimpleSets & prepares the simple level sets for Misiurewicz points search\\
\\
\multicolumn{2}{l}{\bf Tools for splitting $p_n$ and $q_{\ell,n}$ using \texttt{FP80} hardware arithmetic}\\
\tt -hypQuick & computes hyperbolic centers quickly, with low precision\\
\tt -misQuick & computes pre-periodic points quickly, with low precision\\
\tt -misSimpleQuick & computes pre-periodic points quickly, with low precision, using simplified polynomials $s_{\ell,n}=p_{\ell+n-1}+p_{\ell-1}$\\
\\
\multicolumn{2}{l}{\bf Tools for splitting $p_n$ using certifiable \texttt{FP128} arithmetic}\\
\tt -hypRaw & computes the hyperbolic centers, saves results in $\sim$1GB binary files\\
\tt -hypRawCount & counts the results of \texttt{hypRaw}\\
\tt -hypCount & counts and checks the unicity of hyperbolic centers\\
\tt -hypProve & re-proves the hyperbolic centers and the convergence of the Newton map\\
\\
\multicolumn{2}{l}{\bf Tools for splitting $q_{\ell,n}$ using certifiable \texttt{FP128} arithmetic}\\
\tt -misRaw & computes the pre-periodic points, saves results in $\sim$1GB binary files\\
\tt -misSimpleRaw & computes pre-periodic points using simplified polynomials $s_{\ell,n}$\\
\tt -misRawCount  &  counts the results of \texttt{misRaw}\\
\tt -misCount  & counts and checks the unicity of Misiurewicz parameters\\
\tt -misProve & re-proves the Misiurewicz points and the convergence of the Newton map\\
\\
\multicolumn{2}{l}{\bf Statistical and graphical tools for analyzing the database}\\
\tt -hypMinDist & computes minimum distance between hyperbolic centers\\
\tt -misMinDist & computes minimum distance between pre-periodic points\\
\tt -hypTree  & computes the maps of the hyperbolic centers with different resolutions\\
\tt -misTree  & computes the maps of the Misiurewicz points with different resolutions\\
\tt -bitmap & renders the bitmaps described in the input file\\
\\
\multicolumn{2}{l}{\bf Tools for handling \texttt{csv} files and our custom formats \texttt{nset} and \texttt{mpv}}\\
\tt -header & explains the header of a \texttt{nset} file and check for basic file integrity\\
\tt -nset2csv & exports data from binary \texttt{nset} files to \texttt{csv} files\\
\tt -csvCompare & compare \texttt{csv} files with numerical values\\
\tt -csv2mpv  &   packs numbers from \texttt{csv} files to binary \texttt{mpv} files\\
\tt -mpv2mpv & exports and converts data from binary \texttt{mpv} files to \texttt{mpv} files\\
\tt -mpvCompare  &  compare the values stored in two binary \texttt{mpv} files
\end{longtable}
} 

\newpage
\nocite{SCRSSS2020}
\nocite{Pan02}
\nocite{KimShu1991}

\bibliographystyle{alpha}\small

\begin{thebibliography}{MPRW22}

\bibitem[Abe73]{A73}
O.~Aberth.
\newblock Iteration methods for finding all zeros of a polynomial
  simultaneously,.
\newblock {\em Math. Comput.}, 27(122):339--344, 1973.

\bibitem[ACE09]{ACE09}
J.T. Albrecht, C.P. Chan, and A.~Edelman.
\newblock Sturm sequences and random eigenvalue distributions.
\newblock {\em Foundations of Computational Mathematics}, 9:461--483, 2009.

\bibitem[AMV22]{AMV21}
R.~Anton, N.~Mihalache, and F.~Vigneron.
\newblock Fast evaluation of complex polynomials.
\newblock \href{https://arxiv.org/abs/2211.06320}{{arXiv}:2211.06320}, 2022.

\bibitem[AMV23]{AMV2023a}
R.~Anton, N.~Mihalache, and F.~Vigneron.
\newblock A short {ODE} proof of the fundamental theorem of algebra.
\newblock {\em The Mathematical Intelligencer}, 2023.

\bibitem[BAS16]{BAS2016}
T.~Bilarev, M.~Aspenberg, and D.~Schleicher.
\newblock On the speed of convergence of {N}ewton's method for complex
  polynomials.
\newblock {\em Math. Comput.}, 85(298):693--705, 2016.

\bibitem[BDG04]{BDG04}
D.~A. Bini, F.~Daddi, and L.~Gemignani.
\newblock On the shifted {QR} iteration applied to companion matrices.
\newblock {\em Electronic Transactions on Num. Analysis}, 18:137--152, 2004.

\bibitem[BE95]{BORERD}
P.~Borwein and T.~Erd\'{e}lyi.
\newblock {\em Polynomials and Polynomial Inequalities}.
\newblock Springer, 1995.

\bibitem[Ber93]{Ber93}
W.~Bergweiler.
\newblock Iteration of meromorphic functions.
\newblock {\em Bull. AMS}, 29(2):151--188, 1993.

\bibitem[BH03]{BufHen03}
X.~Buff and C.~Henriksen.
\newblock On {K}\"onig's root finding algorithms.
\newblock {\em Nonlinearity}, 16(3):989, 2003.

\bibitem[Bin96]{B96}
D.~A. Bini.
\newblock Numerical computation of polynomial zeros by means of aberth's
  method.
\newblock {\em Numerical Algorithms}, 13:179--200, 1996.

\bibitem[BF00]{BG2000}
D.~A. Bini and G.~Fiorentino.
\newblock Design, analysis and implementation of a
  multiprecision polynomial rootfinder.
 \newblock {\em Numer. Algorithms}, 23:127--173, 2000.
  
\bibitem[BR14]{BR2014}
D.~A. Bini and L.~Robol.
\newblock Solving secular and polynomial equations: a
  multiprecision algorithm.
\newblock {\em J. Comput. Appl. Math.}, 272:276--292, 2014.


\bibitem[BM92]{BERMAD}
C.~Bernardi and Y.~Maday.
\newblock {\em Approximations spectrales de probl{\`e}me aux limite
  elliptiques}.
\newblock Springer, 1992.

\bibitem[BMF12]{BMF2012}
I.~Bogaert, B.~Michiels, and J.~Fostier.
\newblock O(1) computation of legendre polynomials and gauss--legendre nodes
  and weights for parallel computing.
\newblock {\em SIAM J. Sci. Comput.}, 34:C83--C101, 2012.

\bibitem[Bra85]{Bra85}
L.~de Branges.
\newblock A proof of the Bieberbach conjecture.
\newblock {\em Acta Mathematica}, 154 (1):137--152, 1985.

\bibitem[Bre17]{Bre17}
J.~Bremer.
\newblock On the numerical calculation of the roots of special functions
  satisfying second order ordinary differential equations.
\newblock {\em SIAM Journal on Scientific Computing}, 39(1):A55--A82, 2017.

\bibitem[BS05]{BelSmi04}
D.~Beliaev and S.~Smirnov.
\newblock Harmonic measure on fractal sets.
\newblock In European~Mathematical Society, editor, {\em 4ECM Stockholm 2004},
  pages 41--59, 2005.

\bibitem[Buf18]{BUFF2018}
X.~Buff.
\newblock On postcritically finite unicritical polynomials.
\newblock {\em New York Journal of Mathematics}, 24:1111--1122, 2018.

\bibitem[Cam19]{Cam19}
T.R. Cameron.
\newblock An effective implementation of a modified laguerre method for the
  roots of a polynomial.
\newblock {\em Numerical Algorithms}, 82:1065--1084, 2019.

\bibitem[CG93]{CARLESON}
L.~Carleson and T.W. Gamelin.
\newblock {\em Complex dynamics}.
\newblock Springer, 1993.

\bibitem[CGXZ07]{CGXZ07}
S.~Chandrasekaran, M.~Gu, J.~Xia, and J.~Zhu.
\newblock A fast {QR} algorithm for companion matrices.
\newblock {\em Recent Advances in Matrix and Operator Theory}, 179:111--143,
  2007.

\bibitem[Che10]{CHE10}
A.~Cheritat.
\newblock L'ensemble de mandelbrot.
\newblock {\em Images des Math{\'e}matiques},
  \url{https://images.math.cnrs.fr/L-ensemble-de-Mandelbrot.html}, 2010.

\bibitem[DH82]{DOUHUB82}
A.~Douady and J.H. Hubbard.
\newblock It\'{e}ration des polyn\^{o}mes quadratiques complexes.
\newblock {\em C.R. Acad. Sci. Paris, S\'{e}r. I Math.}, 294(3):123--126, 1982.

\bibitem[DH85]{DOUHUB84}
A.~Douady and J.H. Hubbard.
\newblock {\em Etude dynamique des polyn\^{o}mes complexes}.
\newblock Pr\'{e}-publications math\'{e}matiques d'Orsay, 1984-1985.

\bibitem[DT93]{DT93}
E.R. Davidson and W.J. Thompson.
\newblock Monster matrices: their eigenvalues and eigenvectors.
\newblock {\em Computers in Physics}, 7(5):519--522, 1993.

\bibitem[Fat19]{FATOU1919}
P.~Fatou.
\newblock Sur les \'{e}quations fonctionnelles.
\newblock {\em Bull. SMF}, 47:161--271, 1919.

\bibitem[FG15]{FG2013}
C.~Favre and T.~Gauthier.
\newblock Distribution of postrictically finite polynomials.
\newblock {\em Israel Journal of Mathematics}, 2015.

\bibitem[Fra89]{F89}
P.~Fraigniaud.
\newblock Analytic and asynchronous root finding methods on a distributed
  memory multi- computer.
\newblock {\em Research Report LIP-IMAG}, 1989.

\bibitem[FS89]{FreSak89}
M.L. Fredmann and M.E. Saks.
\newblock The cell probe complexity of dynamic data structures.
\newblock In {\em Proceedings of the Twenty-First Annual ACM Symposium on
  Theory of Computing}, STOC' 89, pages 345--354. Association for Computing
  Machinery, 1989.

\bibitem[GMSB16]{GMSB2016}
A.~Gholami, D.~Malhotra, H.~Sundar, and G.~Biros.
\newblock Fft, fmm, or multigrid? a comparative study of state-of-the-art
  poisson solvers for uniform and nonuniform grids in the unit cube.
\newblock {\em SIAM J. Sci. Comput.}, 38(3), 2016.

\bibitem[GSCG17]{GSCG19}
K.~Ghidouche, A.~Sider, R.~Couturier, and C.~Guyeux.
\newblock Efficient high degree polynomial root finding using gpu.
\newblock {\em Journal of Computational Science}, 18:46--56, 2017.

\bibitem[GSKC16]{GSKCx}
K.~Ghidouche, A.~Sider, L.Z. Khodja, and R.~Couturier.
\newblock Two parallel implementations of {E}hrlich-{A}berth algorithm for
  root-finding of polynomials on multiple {GPU}s with {OpenMP} and {MPI}.
\newblock In {\em Intl Conference on Computational Science and Engineering},
  2016.

\bibitem[Guc]{EXPL1}
K.~Guciek.
\newblock \url{https://guciek.github.io/web_mandelbrot.html}.

\bibitem[Gug86]{G86}
H.~Guggenheimer.
\newblock Initial approximations in durand-kerner's root finding method.
\newblock {\em BIT Numerical Mathematics volume}, 26:537--539, 1986.

\bibitem[GV17]{GV2016b}
T.~Gauthier and G.~Vigny.
\newblock Distribution of postrictically finite polynomials {II}: speed of
  convergence.
\newblock {\em American Institute of Mathematical Science}, 11:57--98, 2017.

\bibitem[GV19]{GV2016a}
T.~Gauthier and G.~Vigny.
\newblock Distribution of postrictically finite polynomials {III}:
  combinatorial continuity.
\newblock {\em Fundamenta Math}, 244(1):17--48, 2019.

\bibitem[Hen74]{HEN1974}
P.~Henrici.
\newblock {\em Applied and computational complex analysis, Volume 1: Power
  series, integration, conformal mapping, location of zeros.}
\newblock Wiley, 1974.

\bibitem[HSS01]{HSS2001}
J.H. Hubbard, D.~Schleicher, and S.~Sutherland.
\newblock How to find all roots of complex polynomials by {N}ewton's method.
\newblock {\em Invent. math.}, 146:1--33, 2001.

\bibitem[HT13]{HT2013}
N.~Hale and A.~Townsend.
\newblock Fast and accurate computation of gauss-legendre and gauss-jacobi
  quadrature nodes and weights.
\newblock {\em SIAM J. Sci. Comput.}, 35:A652--A674, 2013.

\bibitem[HT15]{HT15}
B.~ Hutz and A.~Towsley.
\newblock Misiurewicz points for polynomial maps and transversality.
\newblock {\em New York Journal of Mathematics}, 21:297--319, 2015.

\bibitem[{IEE}]{IEEE}
{IEEE~754}.
\newblock \url{https://en.wikipedia.org/wiki/IEEE_754}.

\bibitem[IYM11]{IYM2011}
T.~Imamura, S.~Yamada, and M.~Machida.
\newblock Development of a high-performance eigensolver on a peta-scale
  next-generation supercomputer system.
\newblock {\em Progress in Nuclear Science and Technology}, 2:643--650, 2011.

\bibitem[Jun85]{JUNG1985}
I.~Jungreis.
\newblock The uniformisation of the complement of the {M}andelbrot set.
\newblock {\em Duke Math. J.}, 52(4):935--938, 1985.

\bibitem[KI04]{KI04}
N.~Kyurkchiev and A.~Iliev.
\newblock Failure of convergence of the newton-weierstrass iterative method for
  simultaneous rootfinding of generalized polynomials.
\newblock {\em Computer and Mathematics with Applications}, 47:441--446, 2004.

\bibitem[KS94]{KimShu1991}
M.-H Kim and S.~Sutherland.
\newblock Polynomial root-finding algorithms and branched covers.
\newblock {\em SIAM Journal on Computing}, 23(2):415--436, 1994.

\bibitem[KS16]{KS16}
A.~Kobel and M.~Sagraloff.
\newblock Fast approximate polynomial multipoint evaluation and applications.
\newblock \href{https://arxiv.org/abs/1304.8069}{{arXiv}:1304.8069}, 2016.

\bibitem[Lar13]{LAR13}
K.G. Larsen.
\newblock {\em Models and Techniques for Proving Data Structure Lower Bounds
  Models and Techniques for Proving Data Structure Lower Bounds Models and
  Techniques for Proving Data Structure Lower Bounds}.
\newblock PhD thesis, Aarhus University, Denmark, 2013.

\bibitem[Lev90]{LEVIN1990}
G.M. Levin.
\newblock On the theory of iterations of polynomial families in the complex
  plane.
\newblock {\em J. Soviet Math.}, 52(6):3512--3522, 1990.

\bibitem[Mak85]{Mak84}
N.G. Makarov.
\newblock On the distortion of boundary sets under conformal mappings.
\newblock {\em Proceedings of the London Mathematical Society}, 51(2):369--384,
  1985.

\bibitem[MV$_{n+1}$]{MVCoeffs}
N. Mihalache and F. Vigneron.
\newblock How to compute the coefficients of holomorphic maps.
\newblock {\em In preparation}.

\bibitem[Mil90]{MILNOR}
J.~Milnor.
\newblock {\em Dynamics in one complex variable}.
\newblock Number 160 in Annals of Mathematics Studies. Princeton Univ. Press,
  1990.

\bibitem[M{\"o}b32]{Mob1832}
A.F. M{\"o}bius.
\newblock \"{U}ber eine besondere art von umkehrung der reihen.
\newblock {\em Journal f\"{u}r die reine und angewandte {M}athematik},
  9:105--123, 1832.

\bibitem[Mor13]{Mor13}
G.~Moroz.
\newblock Fast polynomial evaluation and composition.
\newblock Technical Report 453, Inria Nancy - Grand Est, LORIA - ALGO -
  Department of Algorithms, Computation, Image and Geometry, 2013.

\bibitem[MPRW22]{MRW22}
D.~Mart{\'\i}-Pete, L.~Rempe, and J.~Waterman.
\newblock Bounded {F}atou and {J}ulia components of meromorphic functions.
\newblock \href{https://arxiv.org/abs/2204.11781}{{arXiv}:2204.11781}, 2022.

\bibitem[\OF91]{CARDHk}
\OFISlabel{Online Encyclopedia of Integer Sequence}.
\newblock Sequence {A}000740.
\newblock \url{https://oeis.org/A000740}, 1991.

\bibitem[Pan02]{Pan02}
V.Y. Pan.
\newblock Univariate polynomials: Nearly optimal algorithms for numerical
  factorization and root-finding.
\newblock {\em J. Symbolic Computation}, 33:701--733, 2002.

\bibitem[PP98]{COMPLEXDISKARITH}
M.S. Petkovi\'{c} and L.D. Petkovi\'{c}.
\newblock {\em Complex interval arithmetic and its applications}, volume 105.
\newblock Wiley-{VCH}, 1998.

\bibitem[RAY19]{RAY2019}
K.~Ravikumar, D.~Appelhans, and P.K. Yeung.
\newblock Gpu acceleration of extreme scale pseudo-spectral simulations of
  turbulence using asynchronism.
\newblock Technical report, The International Conference for High Performance
  Computing, Networking, Storage and Analysis, DOI: 10.1145/3295500.3356209,
  2019.

\bibitem[Rok01]{ROKNE2001}
J.G. Rokne.
\newblock {\em Interval Arithmetic and Interval Analysis: An Introduction}.
\newblock in book Granular Computing: An Emerging Paradigm, 2001.

\bibitem[Rot63]{Rota1963}
G.-C. Rota.
\newblock On the foundations of combinatorial theory, {I}: {T}heory of
  {M}\"{o}bius functions.
\newblock {\em Z. Wahrscheinlichkeitstheorie u. verw. Gebiete}, 2:340--368,
  1963.

\bibitem[RR05]{MPFI}
N.~Revol and F.~Rouillier.
\newblock Motivations for an arbitrary precision interval arithmetic and the
  {MPFI} library.
\newblock {\em Reliable Computing}, 11:275--290, 2005.

\bibitem[RSS17]{RSS2017}
M.~Randig, D.~Schleicher, and R.~Stoll.
\newblock Newton's method in practice {II}: The iterated refinement {N}ewton
  method and near-optimal complexity for finding all roots of some polynomials
  of very large degrees.
\newblock \href{https://arxiv.org/abs/1703.05847}{{arXiv}:1703.05847}, 2017.

\bibitem[RSS20]{RST2020}
B.~Reinke, D.~Schleicher, and M.~Stoll.
\newblock The weierstrass root finder is not generally convergent.
\newblock \href{https://arxiv.org/abs/2004.04777}{{ArXiv}:2004.04777}, 2020.

\bibitem[Sch23]{Sch2023}
D.~Schleicher.
\newblock On the efficient global dynamics of newton's method for complex
  polynomials.
\newblock {\em Nonlinearity}, 36:1349--1377, 2023.

\bibitem[SCR{\etalchar{+}}20]{SCRSSS2020}
S.~Shemyakov, R.~Chernov, D.~Rumiantsau, D.~Schleicher, S.~Schmitt, and
  A.~Shemyakov.
\newblock Finding polynomial roots by dynamical systems -- a case study.
\newblock {\em Discrete and Continuous Dyn. Systems}, 40(12):6945--6965, 2020.

\bibitem[Sib84]{Sib}
N.~Sibony.
\newblock Expos{\'e}s {\`a} {O}rsay non publi{\'e}s \& {C}ours {UCLA}.
\newblock 1981-1984.

\bibitem[SK19]{SK2019}
A.~Schwarz and L.~Karlsson.
\newblock Scalable eigenvector computation for the non-symmetric eigenvalue
  problem.
\newblock {\em Parallel Computing}, 85:131--140, 2019.

\bibitem[SS17]{SS2017}
D.~Schleicher and R.~Stoll.
\newblock {N}ewton's method in practice: finding all roots of polynomials of
  degree one million efficiently.
\newblock {\em Theor. Comput. Sci.}, 681:146--166, 2017.

\bibitem[Sut89]{SU1989}
S.~Sutherland.
\newblock {\em Finding root of complex polynomials with {N}ewton's method.}
\newblock PhD thesis, Boston {U}niversity, 1989.

\bibitem[SZI{\etalchar{+}}17]{SZIYKH2015}
T.~Sakurai, S.-L. Zhang, T.~Imamura, Y.~Yamamoto, Y.~Kuramashi, and T.~Hoshi,
  editors.
\newblock {\em Eigenvalue Problems: Algorithms, Software and Applications in
  Petascale Computing}, volume 117 of {\em Lecture Notes in Comp. Science and
  Engineering}. Springer, 2017.

\bibitem[TAB{\etalchar{+}}19]{Hecht2019}
P.-H. Tournier, I.~Aliferis, M.~Bonazzoli, M.~de~Buhan, M.~Darbas, V.~Dolean,
  F.~Hecht, P.~Jolivet, I.~El Kanfoud, C.~Migliaccio, F.~Nataf, Ch. Pichot, and
  S.~Semenov.
\newblock Microwave tomographic imaging of cerebrovascular accidents by using
  high-performance computing.
\newblock {\em Parallel Computing}, pages 88--97, 2019.

\bibitem[TTO16]{TTO2016}
A.~Townsend, T.~Trogdon, and S.~Olver.
\newblock Fast computation of gauss quadrature nodes and weights on the whole
  real line.
\newblock {\em IMA J. Numer. Anal.}, 36:337--358, 2016.

\bibitem[vdH09]{MATHEMAGIX}
Joris van~der Hoeven.
\newblock Ball arithmetic.
\newblock {\em Technical report: HAL-00432152.}, 2009.

\bibitem[Wil84]{W84}
J.~H. Wilkinson.
\newblock {\em The perfidious polynomial}, pages 1--28.
\newblock Studies in Numerical Analysis. G. H. Golub, 1984.

\bibitem[\ZA12]{ARB}
\ZARBlabel{F. Johansson}.
\newblock Arb library.
\newblock \url{https://github.com/fredrik-johansson/arb}, 2012.

\bibitem[\ZF13]{FLINT}
\ZFLINTlabel{W. Hart and F. Johansson and S. Pancratz}.
\newblock {FLINT}: {F}ast {L}ibrary for {N}umber {T}heory.
\newblock \url{http://flintlib.org}, 2013.

\bibitem[\ZV22]{FPELib}
\ZVFPElabel{N. Mihalache and F. Vigneron}.
\newblock {FPE} library: a {F}ast {P}olynomial {E}valuator.
\newblock \\\url{https://github.com/fvigneron/FastPolyEval}, 2022.

\bibitem[\ZW24]{MLib}
\ZWMANDELlabel{N. Mihalache and F. Vigneron}.
\newblock {M}andel library: a {N}umerical {M}icroscope onto the {M}andelbrot
  set.
\newblock \url{https://github.com/fvigneron/Mandelbrot}, 2024.

\bibitem[\ZX24]{MLibData}
\ZXMANDDBlabel{N. Mihalache and F. Vigneron}.
\newblock Complete list of hyperbolic centers of period $\leq 41$ and of all
  {M}isiurewicz-{T}hurston points whose pre-period and period sum is $\leq 35$.
\newblock 

\bibitem[\ZZ07]{MPFR}
\ZZPFRlabel{L. Fousse, G. Hanrot, V. Lef{\`e}vre, P. P{\'e}lissier and P.
  Zimmermann}.
\newblock M{PFR}: a {M}ultiple-{P}recision binary {F}loating-point library with
  correct {R}ounding.
\newblock {\em ACM Trans. Math. Software}, 33(2):13--28.
  \url{https://www.mpfr.org}, 2007.
  
\end{thebibliography}

\newcommand{\etalchar}[1]{$^{#1}$}
\newcommand\MUlabel{}\newcommand\MU[2]{M83}\newcommand\OFISlabel{}\newcommand\OF[2]{OEIS}\newcommand\ZZPFRlabel{}\newcommand\ZZ[2]{\texttt{MPFR}}\newcommand\ZARBlabel{}\newcommand\ZA[2]{\texttt{ARB}}\newcommand\ZFLINTlabel{}\newcommand\ZF[2]{\texttt{FLINT}}\newcommand\ZVFPElabel{}\newcommand\ZV[2]{\texttt{FPE}}\newcommand\ZWMANDELlabel{}\newcommand\ZW[2]{\texttt{Mandel}}\newcommand\ZXMANDDBlabel{}\newcommand\ZX[2]{\texttt{Mand.DB}}

\vspace*{3em}\noindent
$^{\small 1}$ \authornicu\\[2ex]
$^{\small 2}$ \authorfv

\end{document}